%
%
%


\documentclass{jams-l}

\usepackage[utf8]{inputenc}
\usepackage{amsmath, amssymb, amsthm}
\usepackage{mathrsfs} 
\usepackage{mathtools}
\usepackage{bm} 
\usepackage{hyperref} 
\usepackage{thmtools} 
\usepackage{faktor} 
\usepackage{bbm}
\usepackage{setspace}

\usepackage[alphabetic]{amsrefs} 

\newcommand{\norm}[1]{\left\lVert#1\right\rVert}

\DeclareMathOperator{\End}{End}
\DeclareMathOperator{\Cl}{C\ell}

\DeclareMathOperator{\dirac}{\mathbb{D}}

\DeclareMathOperator{\J}{\mathbb{J}}
\DeclareMathOperator{\Z}{\mathbb{Z}}
\DeclareMathOperator{\R}{\mathbb{R}}
\DeclareMathOperator{\T}{\mathbb{T}}
\DeclareMathOperator{\C}{\mathbb{C}}
\DeclareMathOperator{\h}{\mathfrak{H}}
\DeclareMathOperator{\B}{\mathfrak{B}}
\DeclareMathOperator{\Q}{\mathbb{Q}}
\DeclareMathOperator{\A}{\mathbb{A}}
\DeclareMathOperator{\modcurve}{\Gamma\backslash G/K}


\usepackage{tikz-cd} 

\copyrightinfo{2021}{Adrienne Sands}

\newtheorem{theorem}{Theorem}[section]
\newtheorem{lemma}[theorem]{Lemma}

\theoremstyle{definition}

\newtheorem{corollary}{Corollary}[theorem]

\theoremstyle{remark}

\numberwithin{equation}{section}

\begin{document}

\title[Automorphic Hamiltonians]{Automorphic Hamiltonians, Epstein zeta functions, and Kronecker limit formulas}


\author{Adrienne Sands}
\address{School of Mathematics, University of Minnesota, Minneapolis, MN 55455}
\curraddr{MIT Lincoln Laboratory, 244 Wood St, Lexington, MA 02421}
\email{adrienne.sands@ll.mit.edu}
\thanks{This material is based upon work supported by the National Science Foundation Graduate Research Fellowship Program under Grant \#39202.}
\dedicatory{This manuscript is dedicated to my advisor, Paul Garrett.}


\subjclass[2022]{Number Theory}

\date{\today}

\begin{abstract}
First, we recount a history of how certain methods using natural self-adjoint operators have, thus far, failed to prove the Riemann Hypothesis. In Section \ref{Background}, we set the analytical context necessary to have genuine proofs in later sections, rather than attractive heuristics. {In Section \ref{Applications}, we recall the utility of designed pseudo-Laplacians by reproving meromorphic continuation of certain Eisenstein series and proving a spacing result for zeros of $\zeta_k(s)$ for $k$ a complex quadratic field with negative determinant, as in \cite{BG20} and \cite{Gar18}}. In Section \ref{Results}, we construct an automorphic Hamiltonian which has purely discrete spectrum on $L^2\left(SL_r(\Z)\backslash SL_r(\R)/SO(r,\R)\right)$, identify its ground state, and show how it can characterize a nuclear Fr\'echet automorphic Schwartz space.
\end{abstract}

\maketitle
\section{Introduction}
In 1972, Montgomery and Dyson heuristically related the spacing of zeros of the Riemann zeta function to the distribution of eigenvalues of random Hermitian matrices used in nuclear physics. The eigenstates of an important class of quantum mechanical operators satisfy this distribution, suggesting a mysterious connection between mathematical physics and number theory. {Some time in the 1910s}, P\'{o}lya and Hilbert independently speculated, in effect, that one might prove the Riemann Hypothesis if the zeros $s$ of $\zeta(s)$ corresponded to eigenvalues $\lambda_s = s(s-1)$ of a self-adjoint operator. While it is certainly possible to construct an operator with eigenvalues parametrized by zeros, {self-adjointness is apparently unverifiable by available
machinery \cite{BC95}}.

Efforts to prove that eigenvalues of certain self-adjoint operators correspond to zeros $s$ of $\zeta(s)$ have been dramatic. In his masters thesis  \cite{Ha77}, H. Haas attempted to numerically
compute eigenvalues $\lambda_s=s(s-1)$ of the invariant Laplace-Beltrami operator {(see Section \ref{Laplacian})} on $SL_2(\Z)\backslash\h$. H. Stark and D. Hejhal happened upon his list of $s$-values via A. Terras and recognized zeros of $\zeta(s)$ and the Dirichlet $L$-function $L(s, \chi_{-3})$, a striking discovery which suggested that the P\'olya-Hilbert dream might be realized with a natural operator. Hejhal immediately began efforts to replicate Haas’s results using more careful numerical methods but found that exactly
these zeros were missing. Naturally, he wondered how flawed numerical procedures could produce garbage output with such number theoretic significance. He determined that Haas had {misapplied Henrici's collocation method of numerically solving differential equations \cite{FHM67}}, inadvertently allowing non-smoothness of eigenfunctions at the corners $\omega = e^{2\pi i/3}$ of the usual fundamental domain for $SL_2(\Z)$, and had actually solved an inhomogeneous equation
$(\Delta-\lambda_s)u_s=\delta_\omega^{\text{afc}}$, with $\delta_\omega^{\text{afc}}=\sum_{\gamma\in\Gamma}\delta^{\mathfrak{H}}_\omega\circ\gamma$ the automorphic Dirac delta at the third and sixth roots of unity {\cite{Hej81}}. Since the “eigenfunctions” corresponding to zeros of $\zeta(s)$ were solutions to an inhomogeneous equation (rather than $(\Delta-\lambda_s)u=0)$, the special values in Haas’s list were not necessarily genuine eigenvalues of a self-adjoint operator, and no inferences (such as the Riemann
Hypothesis) could be made from the special $\lambda_s$'s.

Nonetheless, {certain properties of automorphic Green's functions} {(see Section \ref{GreenFn})} had already been investigated in \cite{Els73}, \cite{Neu73}, \cite{Fay77}, \cite{Nie73}, and others. For example, the constant term of $u_s$ is
\[
c_P u_s(iy)=\int_0^1 u_s(x+iy) dx=\frac{y^{1-s}E_s(\omega)}{2s-1},\hspace{5mm} \text{for }y \geq 1, 
\]
and it had long been known that 
\[
E_s(\omega)=\left(\frac{\sqrt{3}}{2}\right)^{s/2}\cdot\frac{\zeta(s)L(s,\chi_{-3})}{\zeta(2s)}. 
\]
Although these connections are provocative, there is no obvious connection to the (genuine) eigenvalues of a self-adjoint operator.

In the early 1980s, mathematical physicist Y. Colin de Verdi\`ere speculated on a way to promote these numbers to genuine eigenvalues by considering variants of $\Delta$ with additional true eigenfunctions \cite{CdV82}. In particular, he considered Friedrichs’ self-adjoint extension $\tilde\Delta_\delta$ of {a restriction of} $\Delta$ {(see Section \ref{Friedrichs})} to automorphic test functions in the kernel of $\delta_\omega^{\text{afc}}$. These {so-called} {pseudo-Laplacians} allow certain non-smooth functions as genuine eigenfunctions, so the special values in Haas’ list might be (true) eigenvalues of a self-adjoint operator. However, a technical hazard obstructs such a construction: the Friedrichs extension for second-order elliptic operators limits the non-smoothness of eigenfunctions to a $+1$-index Sobolev space, but solutions to $(\Delta-\lambda_s)u=\delta_\omega^{\text{afc}}$ are in a $1-\varepsilon$ Sobolev space. In his 1983 paper, Colin de Verdi\`ere suggested one might overcome this issue by projecting solutions to the orthogonal complement of the discrete spectrum of $L^2(SL_2(\Z)\backslash\h)$.

{Almost forty years later, \cite{BG20} validated and refined the observations of Hejhal and Colin de Verdi\`ere. After projecting, not exactly to the orthogonal complement of the discrete spectrum, but to the {larger} non-cuspidal spectrum, all eigenvalues $\lambda_s$, if any at all, of the pseudo-Laplacian $\tilde\Delta_\theta$ would correspond to zeros of $\zeta(s)$ and $L(s,\chi_{-3})$; however, assuming {Montgomery’s 1972 pair correlation conjecture} about the spacing of zeros, at most 94\% of the zeros of $\zeta(s)$ can appear in this basic construction. Since the alleged remaining 6\% probably aren’t special, it is reasonable to believe that no zeros appear in the simplest implementation of Colin de Verdi\`ere’s idea.}

Central, but not unique, to the work of Bombieri-Garrett is a sufficient analytical viewpoint to give genuine proofs of attractive heuristics, for example, from physics. After all, physicists \cite{Dir28}, \cite{Dir30}, \cite{Tho35}, \cite{BP35} used {solvable models} such as $(-\Delta - \lambda)u=\delta$ to successfully predict experimental outcomes since at least the 1930s, often rewriting the corresponding differential equation as a perturbation of $-\Delta$ by a {singular} potential $\delta$ (see Section \ref{Perturbations}). Apparently, these differential equations were only understood rigorously after \cite{BF61}.

The {spectral theory of unbounded self-adjoint operators} on global automorphic Sobolev spaces {(see Section \ref{Sobolev})} provides a reasonable context to discuss partial differential equations in automorphic forms, {from solvable models to eigenvalue equations of quantum mechanical Hamiltonians, perturbations of $-\Delta$ by smooth confining potentials $q$ (see Section \ref{Perturbations})}.  After establishing an appropriate analytical context in Section \ref{Background} and expounding on aforementioned results in Section \ref{Applications}, we construct an automorphic Hamiltonian $-\Delta+q$ whose self-adjoint Friedrichs extension has purely discrete spectrum on $SL_r(\Z)\backslash SL_r(\R)/SO(r,\R)$ and characterizes an automorphic Schwartz space as a nuclear Fr\'echet space.

\section{Background}
\label{Background}
\hspace{1em}{In this chapter, we recall classical results}. In Section \ref{Reduction}, we exhibit a single (adelic) Siegel set which covers automorphic quotients $G_k\backslash G_{\mathbb{A}}$ and define a suitable notion of height on such quotients to simplify computations. In Section \ref{Laplacian}, to facilitate construction of {an} automorphic Hamiltonian $-\Delta + q$, we recall a coordinate-free characterization of the invariant Laplacian $\Delta$ {on quotients $\Gamma\backslash G/K$}. In Sections \ref{Degenerate} and \ref{Epstein}, we recall analytical properties of degenerate Eisenstein series attached to certain maximal proper parabolic subgroups of $SL_r(\R)$, which motivate our choice of confining potential $q$. In Section \ref{Sobolev}, we characterize the global automorphic Sobolev spaces of functions {on which our Hamiltonian is defined}. In Section \ref{Schwartz}, we recall {Schwartz' kernel theorem} and offer a convenient notion of a nuclear Fr\'echet automorphic Schwartz space. In Section \ref{Friedrichs}, we recall Friedrichs' construction of self-adjoint extensions of densely-defined semi-bounded {symmetric} operators on Hilbert spaces.  In Section \ref{Perturbations}, we briefly recount the perturbation theory of linear operators, placing our Hamiltonians in a larger context. 

\subsection{Reduction Theory}
\label{Reduction}
{To simplify analysis in later sections, we exhibit a single (adelic) Siegel set that covers the quotient $SL_r(\Z)\backslash SL_r(\R)/SO_r(\R)$ and define a suitable notion of height. We largely follow the discussion in \cite{Gar18}*{Section 3}, which follows {Godement's Bourbaki talk on reduction theory \cite{God64}}, and refer the reader to the classic texts \cite{Bor66b} and \cite{Spr94}.  For convenience to the reader, we include some foundational material about the adele group $G_{\A}=GL_r(\A)$.}

\subsubsection{{The adele group $G_{\A}=GL_r(\A)$}}
\label{AdeleGroup}

Let $k$ be an arbitrary number field with integers $\mathfrak{o}$, completions $k_v$, and local rings of integers $\mathfrak{o}_v$ at non-archimedean places. Let $G_v=GL_r(k_v)$ with center $Z_v$. For non-archimedean $v$, let $K_v=GL_r(\mathfrak{o}_v)$. For real $v$, let $K_v$ be the standard orthogonal group $O_r(\R)=\{g\in GL_r(\R): g^\top g = 1_r\}$, and for complex $v$, let $K_v$ be the standard unitary group $U_r=\{g\in GL_r(\C): g^*g = 1_r\}$.

Let $\ell$ be the number of non-isomorphic archimedean completions of $k$. That is, $\ell=\ell_1+\ell_2$, where $\ell_1$ is the number of real completions, $\ell_2$ is the number of complex completions (not counting conjugates), and $[k:\Q]=\ell_1 + 2\ell_2$. Let $Z^+$ be the positive real scalar matrices diagonally imbedded across all archimedean places $v$ by the map 
\[
\delta: t\mapsto (...,t^{1/\ell},...)\hspace{7mm}(\text{for }t>0)
\]
The map $\delta$ gives a section of the idele norm map
\[
|\cdot|: t\mapsto \prod_v |t_v|_v.
\]
That is, $|\delta(t)|=t$.

The group $P_v$ of $v$-adic points of a standard parabolic $P=P^{F}$ is the stabilizer of $F_v\subset k_v^r$ in $G_v$ with the same shape as $P$. Similarly, $N_v = N^P_v$ and $M_v = M^P_v$ are the $v$-adic points of the unipotent radical $N^P$ and the standard Levi component $M^P$ of a standard parabolic $P$, respectively. As expected, we have Iwasawa decompositions $G_v = P_v\cdot K_v$ for a standard minimal parabolic $P$ and Cartan decompositions $G_v=K_vM_vK_v$ for $M$ the standard Levi component of the minimal parabolic (for proofs, see \cite{Gar18}*{Claims 3.2.1 and 3.2.2}, for example). The Cartan decompositions follow from the spectral theorem for symmetric or Hermitian operators at archimedean places and from the structure theorem for finitely-generated modules at finite places.

The corresponding adele group is $G_{\A}=GL_r(\A)$, $r$-by-$r$ matrices with entries in $\A$ and determinant in the ideles $\J$. This group is also a colimit of products
\[
G_S = \prod_{v\in S}G_v \times \prod_{v\notin S}K_v,
\]
where finite sets $S$ of places $v$ are ordered by containment. Similarly, $P_{\A}$, $M^P_{\A}$, $N^P_{\A}$, and $Z_{\A}$ are the adelic forms of those groups. We also let $K_{\A}=\prod_v K_v\subset G_{\A}$.

\subsubsection{{Reduction theory for $G_k\backslash G_{\A}$}}
\label{ReductionTheory}
For $k$ an arbitrary number field and $G_k= GL_r(k)$, we identify a suitable notion of height $\eta$ on $G_k\backslash G_{\A}$ and recall that a single adelic Siegel set $\mathfrak{S}_{t,C}$ covers the quotient $G_k\backslash G_{\A}$.

Let $P_k$, $M^P_k$, and $N^P_k$ be the corresponding subgroups of $G_k$ with entries in $k$. We omit a proof of discreteness of $G_k$ in $G_{\A}$, since the proof easily generalizes from the proof for $GL_2$ (see \cite{Gar18}*{Claim 2.2.1}, for example). Let 
\[
G^1= \{ g\in G_{\A}: |\det g|=1\}
\]
such that $G_{\A}=Z^+\times G^1$. By the product formula, $\prod_{v\leq \infty}|t|_v = 1$ for $t\in k^\times$, {we observe that $G_k\subset G^1$}, and, in particular, {$G_k$ is still discrete in $Z^+\backslash G_{\A}\approx G^1$}.

In contrast to the $GL_2$, the notion of a single numerical height for $GL_r$ is initially replaced by the family of $(r-1)$ standard positive simple roots
\[
\alpha_i \begin{pmatrix} m_1\\
& \ddots &
\\ & & m_r
\end{pmatrix} = \frac{m_i}{m_{i+1}}\hspace{7mm}(1\leq i < r),
\]
which are characters on the Levi component $M^{\text{min}}$ of the standard minimal parabolic $P^{\text{min}}=P^{1,...,1}$.  These roots make sense on $M_k$, $M_v$, and $M_{\A}$ and take values in $k^\times$, $k_v^\times$, and $\J$, respectively. 

For reasons that will become apparent, we need a notion of height on $\A^r$ to identify a suitable notion of height on $G_k\backslash G_{\A}$. Let $G_{\A}$ act on the right on $\A^r$ by matrix multiplication. For real places $v$ of $k$, the local height function on $k_v^r$ is $h_v(x) = \sqrt{x_1^2+...+x_r^2}$. For complex $v$, take $h_v(x)=|x_1|_{\C}+...+|x_r|_{\C}$, with $|z|_{\C}=|N_{\R}^{\C}z|_{\R}$ to avoid disturbing the product formula. For non-archimedean $v$, let $h_v(x)=\sup_{i}|x_i|_v$. {By design, the isometry groups of the height functions $h_v$ are the compact subgroups $K_v$. That is, the $K_v$ preserve heights $h_v$.}

{A primitive vector $x\in \A^r$ is of the form $x=x_o\cdot g$ for $g\in G_{\A}$ and $x_o\in k^r-\{0\}$}. For $x=(x_1,...,x_r)\in k^r$ and almost all non-archimedean places $v$, the components $x_i$ are in $\mathfrak{o}_v$ and have local greatest common divisor 1; furthermore, elements of the adele group $g\in G_{\A}$ are in $K_v$ almost everywhere, so this is not changed by multiplication by $g$.  Thus, primitive vectors $x=(x_1,...,x_r)$ have the property that the components $x_i$ are locally integrable and have local greatest common divisor 1 almost everywhere. Since almost all finite primes have local height 1, it makes sense to define a global height function on primitive vectors as the product of local height functions: $h(x)= \prod_v h_v(x_v)$. As recalled in \cite{Gar18}*{Claim 3.3.2}, for example, this global height function $h(x)$ on primitive vectors $x\in \A^r$ has several convenient properties which facilitate the reduction theory of $G_k\backslash G_{\A}$:
\begin{enumerate}
\item For fixed $g\in G_{\A}= GL_r(\A)$ and fixed $c>0$,
\[
\text{card}\left( k^\times \backslash \{x\in k^r - \{0\}: h(x\cdot g) < c\} \right) < \infty,
\]
\item For compact $C\subset G_{\A}$, there are positive implied constants such that for all $g\in C$ and primitive $x\in \A^r$,
\[
h(x)\ll_C h(x\cdot g)\ll_C h(x).
\]
\item For $t\in k^\times$, via the product formula $\prod_{v\leq \infty}|t|_v =1$ for $t\in k^\times$, 
\[
h(t\cdot x) = \prod_{v\leq \infty}h_v(t\cdot x)=\prod_{v\leq \infty}|t|_v\cdot h_v(x)=\prod_{v\leq \infty}|t|_v \cdot \prod_{v\leq \infty} h_v(x)=1\cdot h(x)
\]
\end{enumerate}

Finally, let 
\[
\eta_v(g_v)= |\det g_v|_v\cdot h_v(e_r\cdot g_v)^{-r}\hspace{10mm}(\text{for }g_v\in G_v, \{e_i\} \text{ the standard basis for }k^r)
\]
and $\eta(g)=\prod_v\eta_v(g_v)$ for $g=(g_v)_v\in G_{\A}$. In what follows, this is a suitable notion of height on $G_k\backslash G_{\A}$. We recall several analytic properties:

\begin{lemma}
The height function $\eta(g)= \prod_{v}|\det g_v|_v\cdot h_v(e_r\cdot g_v)^{-r}$ is right $K_{\A}$-invariant,  left $P_k^{r-1,1}$-invariant, and $Z_{\A}$-invariant.
\end{lemma}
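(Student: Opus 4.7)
The plan is to verify the three invariances separately: $K_{\mathbb{A}}$-invariance and $Z_{\mathbb{A}}$-invariance follow locally (place by place), while $P_k^{r-1,1}$-invariance requires invoking the product formula on $k^\times$ to cancel global factors.

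First I would handle right $K_{\mathbb{A}}$-invariance. For $k=(k_v)_v\in K_{\mathbb{A}}$, I would check each local factor $|\det g_vk_v|_v \cdot h_v(e_r g_v k_v)^{-r}$. The determinant piece is unchanged because $|\det k_v|_v = 1$: at non-archimedean places, $k_v\in GL_r(\mathfrak{o}_v)$ forces $\det k_v\in \mathfrak{o}_v^\times$, and at archimedean places, $O_r(\R)$ and $U_r$ have unit-modulus determinant. The height piece is unchanged because the text explicitly notes that the $K_v$ are precisely the isometry groups of $h_v$, so $h_v((e_r g_v) k_v) = h_v(e_r g_v)$.

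Next I would handle $Z_{\mathbb{A}}$-invariance, which also reduces to a local check at each place. For $z=(t_v\cdot 1_r)_v$, the determinant transforms as $|\det(z_v g_v)|_v = |t_v|_v^r \cdot |\det g_v|_v$, while right-multiplication by $e_r$ and homogeneity of $h_v$ give $h_v(e_r z_v g_v) = h_v(t_v e_r g_v) = |t_v|_v \cdot h_v(e_r g_v)$. The factor $|t_v|_v^r$ in the numerator is precisely canceled by $|t_v|_v^{-r}$ from the $h_v^{-r}$, so the local factor (and hence the product) is invariant.

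The main substance is in the left $P_k^{r-1,1}$-invariance, and this is the one step that genuinely uses $k$-rationality. The key structural observation is that elements of $P^{r-1,1}$ have last row of shape $(0,\dots,0,d)$, so for $p\in P_k^{r-1,1}$ with $d\in k^\times$ the bottom-right entry, we have the \emph{global} identity $e_r\cdot p = d\cdot e_r$. Hence $e_r\cdot p_v g_v = d\cdot e_r g_v$ at every place, and the local height transforms as $h_v(e_r p_v g_v) = |d|_v\cdot h_v(e_r g_v)$. Combining with $|\det p_v g_v|_v = |\det p|_v\cdot |\det g_v|_v$, the two extraneous global factors appearing in $\eta(pg)/\eta(g)$ are $\prod_v |\det p|_v$ and $\prod_v |d|_v^{-r}$. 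Both are $1$ by the product formula on $k^\times$, since $\det p,\, d\in k^\times$.

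The only real obstacle is bookkeeping — in particular, being careful that the ``left action'' of $p$ on $g$ is actually the left action on the row vector $e_r$, i.e.\ that $e_r\cdot(pg) = (e_r\cdot p)\cdot g$ under our right-action convention. Once that is confirmed and the shape of $P^{r-1,1}$ is used to identify the last row of $p$, everything collapses to the product formula and the standard fact that $\det K_v$ has unit modulus.
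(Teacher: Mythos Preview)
Your proposal is correct and matches the paper's proof essentially line for line: both arguments verify the three invariances separately, using that $K_v$ is the isometry group of $h_v$ (together with $|\det k_v|_v=1$), the block shape $e_r\cdot p = d\cdot e_r$ plus the product formula for $P_k^{r-1,1}$, and the homogeneity cancellation $|t|^r\cdot|t|^{-r}=1$ for $Z_{\A}$. If anything, you are slightly more explicit than the paper in noting that $|\det k_v|_v=1$ is also needed for the $K_{\A}$-step.
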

\begin{proof}
The isometry groups of the height functions $h_v$ are the compact subgroups $K_v$, so $\eta(g)$ is right $K_{\A}$-invariant. By the product formula, for $p=\begin{pmatrix}a & b \\ 0 & d\end{pmatrix}\in P^{r-1,1}_k\subset G^1$,
\begin{align*} 
\eta(p\cdot g)&=|\det pg| \cdot h(e_r\cdot pg)^{-r}= |\det pg|\cdot h(de_r\cdot g)^{-r}\\
&= |\det p|\cdot |\det g|\cdot |d|^{-r}\cdot h(e_r\cdot g)^{-r} = 1\cdot |\det g|\cdot h(e_r\cdot g)^{-r} = \eta (g).
\end{align*}
For $z=\begin{pmatrix}t & &  & \\  &  & \ddots & \\  &  &  & t\end{pmatrix}\in Z_{\A}$
\begin{align*}
\eta(z\cdot g)&=|\det zg|\cdot h\left(e_r\cdot zg\right)^{-r}= |\det zg|\cdot h(te_r\cdot g)^{-r}\\
&= |\det z|\cdot |\det g|\cdot |t|^{-r}\cdot h(e_r\cdot g)^{-r} = |t|^r\cdot |t|^{-r}\cdot \eta(g) = \eta(g)
\end{align*}
\end{proof}
\begin{lemma}[For example, \cite{Gar18}*{Corollary 3.3.3}]
\label{heightLemma}
For any $g\in G_{\A}$, there are finitely-many $\gamma\in P_k^{r-1,1}\backslash G_k$ such that 
\[
\eta(\gamma\cdot g)>\eta(g).
\]
Thus, the supremum $\sup_{\gamma}\eta(\gamma\cdot g)$ is attained and is finite.
\end{lemma}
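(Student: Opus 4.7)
The plan is to reduce the claim to property (1) of the global height on primitive vectors via an explicit parametrization of $P_k^{r-1,1}\backslash G_k$. Concretely, I would first identify this coset space with $k^\times$-orbits in $k^r - \{0\}$ by sending $\gamma \mapsto k^\times \cdot (e_r \gamma)$. Well-definedness follows from the block shape of $p \in P^{r-1,1}$: right multiplication by $p$ sends $e_r$ to $d \cdot e_r$ for the scalar $d \in k^\times$ in the lower-right corner. Injectivity amounts to noting that stabilizing the line $k^\times \cdot e_r$ under right multiplication is precisely the condition that the bottom row be $(0,\ldots,0,\ast)$, i.e., that the element lie in $P_k^{r-1,1}$; surjectivity follows from transitivity of $G_k = GL_r(k)$ on $k^r - \{0\}$.

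Next, I would use $G_k \subset G^1$, so $|\det \gamma| = 1$ for $\gamma \in G_k$, to rewrite
\[
\eta(\gamma \cdot g) = |\det \gamma g| \cdot h(e_r \gamma g)^{-r} = |\det g| \cdot h(e_r \gamma g)^{-r}.
\]
Thus the inequality $\eta(\gamma g) > \eta(g)$ is equivalent to $h(e_r \gamma g) < h(e_r g)$. Setting $c := h(e_r g)$ and using the bijection, the number of $\gamma \in P_k^{r-1,1}\backslash G_k$ with $\eta(\gamma g) > \eta(g)$ equals the cardinality of $k^\times \backslash \{x \in k^r - \{0\} : h(x \cdot g) < c\}$. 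Since each $x \cdot g$ lies in the $G_{\A}$-orbit of $k^r - \{0\}$, it qualifies as a primitive vector in the sense defined just before the lemma, so property (1) applies and yields finiteness.

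To conclude that the supremum is attained, I would observe that if the finite set $S := \{\gamma \in P_k^{r-1,1}\backslash G_k : \eta(\gamma g) > \eta(g)\}$ is empty, then $\sup_\gamma \eta(\gamma g) = \eta(g)$, attained at the identity coset; otherwise $S$ is a non-empty finite set on which $\eta(\,\cdot\, g)$ achieves a maximum, which is then the overall supremum and is obviously finite (it is a positive real number). The main point requiring care---and the only real obstacle---is verifying that the parametrization really identifies cosets with $k^\times$-orbits of non-zero vectors and that the resulting vectors $e_r \gamma \cdot g$ qualify as primitive so that property (1) is applicable; both checks are short and follow directly from the definitions set up earlier in this subsection.
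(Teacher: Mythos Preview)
Your proposal is correct and follows essentially the same approach as the paper: both establish the bijection $P_k^{r-1,1}\backslash G_k \leftrightarrow k^\times\backslash(k^r-\{0\})$ via the bottom row (equivalently $\gamma\mapsto k^\times\cdot e_r\gamma$), use $G_k$-invariance of $|\det g|$ to reduce $\eta(\gamma g)>\eta(g)$ to a height inequality $h(e_r\gamma g)<c$, and then invoke property~(1). You spell out the supremum-attainment step and the primitivity check a bit more carefully than the paper does, but the argument is the same.
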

\begin{proof}
This lemma hinges on the {natural bijection $P^{r-1,1}_k\backslash G_k \longleftrightarrow k^\times \backslash (k^r - \{0\})$} given by
\[
P^{r-1,1}_k\cdot \begin{pmatrix} * & ...  & * \\
\vdots & \ddots & \vdots  \\
c_1 &... &c_{r} 
\end{pmatrix} \mapsto k^\times \cdot (c_1, ..., c_{r}).
\] 
for any invertible matrix with bottom row $(c_1, ..., c_{r})$. {Indeed, $G_k$ is transitive on non-zero vectors, and $P^{r-1,1}_k$ is the stabilizer, acting on the right, of the line (minus a point) $k^\times\cdot e_r$}. By property 1 of the global height function $h$, there are finitely-many $x\in k^\times \backslash (k^r - \{0\})$ such that $h(x\cdot g) < c$ (equivalently, such that $h(x\cdot g)^{-1} > c^{-1})$. Then, the bijection above and $G_k$-invariance of $|\det g|$ yields our assertion.
\end{proof}

Finally, we exhibit a single (adelic) Siegel set covering $G_k\backslash G_{\A}$. A standard Siegel set aligned to a (standard) parabolic $P$ is of the form
\[
\mathfrak{S}^P = \mathfrak{S}^P_{t,C} = \{g= nmk: n\in C, m\in M_{\A}, k\in K_{\A}, \text{ and }|\alpha_i(m)|\geq t, \text{ for }1\leq i < r\}
\]
for $t\in (0,\infty)$, $C\subset N^P_{\A}$ compact, and $|\cdot|$ the idele norm.  For $P=P^{min}$ the minimal parabolic, we have the following:

\begin{theorem}[For example, \cite{Gar18}*{Theorem 3.3.1}]
Given a number field $k$, there is some $t>0$ and compact $C\subset N^{P}_{\A}$ such that $G_k\cdot\mathfrak{S}^P_{t,C}=G_{\A}$.
\end{theorem}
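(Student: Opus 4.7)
The plan is to proceed by induction on $r$, combining the height-maximization from Lemma \ref{heightLemma} with the inductive hypothesis applied to the upper-left $GL_{r-1}$ block. The base case $r=1$ is immediate, since $G_\A = M_\A \cdot K_\A$ and there are no simple roots. For the inductive step, given $g \in G_\A$, first choose $\gamma_1 \in G_k$ attaining $\sup_{\gamma \in G_k} \eta(\gamma g)$, which exists by Lemma \ref{heightLemma}. Replacing $g$ by $\gamma_1 g$, we may assume $\eta(g) \geq \eta(\gamma g)$ for every $\gamma \in G_k$.

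Next, apply Iwasawa decomposition along the maximal parabolic $P^{r-1,1}$ to write
\[
g \;=\; \begin{pmatrix} I & v \\ 0 & 1 \end{pmatrix} \begin{pmatrix} A & 0 \\ 0 & d \end{pmatrix} k_1,
\]
with $A \in GL_{r-1}(\A)$, $d \in \J$, $v \in \A^{r-1}$, and $k_1 \in K_\A$. By induction there is $\gamma_A \in GL_{r-1}(k)$ placing $\gamma_A A$ in a Siegel set for $GL_{r-1}(\A)$ with some parameters $(t', C')$. Because $\begin{pmatrix} \gamma_A & 0 \\ 0 & 1 \end{pmatrix}$ lies in $P^{r-1,1}_k$ and $\eta$ is left $P^{r-1,1}_k$-invariant, this step preserves the height-maximality established above. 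The outcome is that the full Iwasawa diagonal $m = \text{diag}(m_1, \ldots, m_{r-1}, d)$ of the adjusted $g$ satisfies $|\alpha_i(m)| \geq t'$ for $1 \leq i \leq r-2$.

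It remains to control $\alpha_{r-1}(m) = m_{r-1}/d$ and to place the full Iwasawa unipotent in a compact set. Compactness of $N^{\text{min}}_k \backslash N^{\text{min}}_\A$ (a product of $k\backslash\A$'s indexed by the positive roots of $P^{\text{min}}$) permits a left multiplication by some $\gamma_3 \in N^{\text{min}}_k \subset P^{r-1,1}_k$ moving the unipotent part into a fixed compact $C \subset N^{\text{min}}_\A$; this leaves both $m$ and $\eta$ unchanged. To bound $\alpha_{r-1}(m)$ from below, compare $\eta(g) \geq \eta(wg)$ for $w \in G_k$ the permutation matrix swapping $e_{r-1}$ and $e_r$. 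Using $K_\A$-invariance of local heights, a direct computation yields
\[
h(e_{r-1}\cdot g) \;\ll_C\; |m_{r-1}| + |d|,
\]
with implicit constant controlled by the compactness bound on the entry $n_{r-1,r}$. Combining with the maximality inequality $h(e_{r-1}\cdot g) \geq h(e_r\cdot g) = |d|$ forces $|m_{r-1}| \gg_C |d|$, i.e., $|\alpha_{r-1}(m)| \geq t''$ for some $t''>0$ depending only on $k$ and $C$.

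The main obstacle is sequencing these normalizations so that the height-maximality of the first step survives all later adjustments. This is automatic: every $G_k$-factor used after the initial maximization lies in $P^{r-1,1}_k$, under which $\eta$ is left-invariant by the lemma preceding Lemma \ref{heightLemma}. Taking $t = \min(t', t'')$ and the compact $C$ from the unipotent normalization then places the composite normalized $g$ inside $\mathfrak{S}^{P^{\text{min}}}_{t,C}$, proving the claim.
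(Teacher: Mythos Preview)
Your overall architecture matches the paper's: induct on $r$, maximize $\eta$ using Lemma~\ref{heightLemma}, pass to $P^{r-1,1}$ via Iwasawa, and apply the inductive hypothesis to the $GL_{r-1}$ block. The gap is entirely in your final step, the lower bound on $|\alpha_{r-1}(m)|$.

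First, the inequality $h(e_{r-1}\cdot g)\ll_C |m_{r-1}|+|d|$ is not correct adelically. The global height is a \emph{product} $\prod_v h_v$, and the best local bound from compactness of $n_{r-1,r}$ is $h_v\le C_v\max(|m_{r-1}|_v,|d|_v)$. The product $\prod_v\max(|m_{r-1}|_v,|d|_v)$ is in general not comparable to $|m_{r-1}|+|d|$; for instance, take $m_{r-1}$ and $d$ to be ideles each equal to a single prime at one finite place and $1$ elsewhere, so both idele norms are small while the product of local maxima is $1$. Second, even granting such a bound with some constant $C_1$, the chain $|d|\le h(e_{r-1}\cdot g)\le C_1(|m_{r-1}|+|d|)$ yields $(1-C_1)|d|\le C_1|m_{r-1}|$, which is vacuous for $C_1\ge 1$---and there is no reason for $C_1<1$.

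The underlying issue is that a comparison with the single Weyl element $w$ is not enough in the adelic setting; the classical $SL_2(\Z)$ argument works only because there is a single archimedean place. The paper instead observes that height-minimality of $e_r\cdot q$ holds against \emph{all} $x'=(0,\ldots,0,x_{r-1},x_r)\in k^r$, which says precisely that the lower-right $2\times 2$ block of $q$ already satisfies the $GL_2$ height-minimality condition. One then invokes the $GL_2$ reduction theorem as the genuine (and non-trivial) base case of the induction---not $r=1$---to obtain $|q_{r-1,r-1}/q_{r,r}|\ge t$. Your argument, with $r=1$ as base case, would have to prove the $r=2$ case from scratch via the same flawed step.
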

\begin{proof}
{We include a proof to illustrate the role of the $P^{r-1,1}$ parabolic (see Section \ref{Parabolic}) in the reduction theory of $G_k\backslash G_{\mathbb{A}}$}. The proof proceeds by induction on $r$, reducing to the case $r=2$ treated in many sources, including \cite{Gar18}*{Section 2.2}.

Given $g\in G_{\A}$ and by Lemma \ref{heightLemma}, $h(x'\cdot g)$  is minimized at some $x\in k^{r}-\{0\}$ with $h(x\cdot g)>0$. Let $\gamma_o\in G_k$ such that $x=e_r\gamma_o$. Since $h(x\cdot g)=h(e_r\gamma_o\cdot g)$ is minimal, $\eta(\gamma_o g)$ is maximal among all values $\eta(\gamma\cdot \gamma_og)$ for $\gamma\in G_k$. By Iwasawa, there is $\theta\in K$ such that $q=\gamma_o g\theta\in P_{\A}^{r-1,1}$. {Then $h(\gamma_o g\theta)=|q_{r,r}|$ where $q_{i,j}$ is the $ij^{th}$ entry of $q$}, and $\eta(q)$ is maximal among all values $\eta(\gamma\cdot g)$ for $\gamma\in G_k$.

Let $H\subset M_{\A}^{r-1,1}$ be the subgroup of $G_{\A}$ fixing $e_r$ and stabilizing the subspace spanned by $e_1,...,e_{r-1}$, so $H\approx GL_{r-1}(\A)$. By induction on $r$, beginning at $r=2$ as treated in many sources including \cite{Gar18}*{Section 2.2}, by acting on $q=\lambda_o g\theta$ on the left by $H_k$ and on the right by $H_{\A}\cap K_{\A}$, we can suppose that $q\in P_{\A}^{min}$ and $|q_{i,i}/q_{i+1,i+1}|\geq t$ for $i<r-1$, without altering $\eta(q)$. Certainly, since 
\[
h(e_r\cdot q)\leq h(x'\cdot q)\hspace{5mm}\text{for all }x'\in k^r-\{0\},\]
we have $h(e_r\cdot q)\leq h(x'\cdot q)$ for the subset of vectors of the form $x'=(0,...,0,x_{r-1},x_r)$. Thus, the lower right 2-by-2 block $q'$ of $q$ is reduced as an element of $GL_2(\mathbb{A})$. This reduces to the $r=2$ case treated in \cite{Gar18}*{Theorem 2.2.7}, yielding 
\[
|q_{r-1,r-1}|/|q_{rr}|\geq t,\hspace{5mm}\text{for sufficiently small }t
\]
and proving the theorem.
\end{proof}

\subsection{The invariant Laplacian}
\label{Laplacian}
We characterize the invariant Laplacian descended from Casimir, prove its invariance on automorphic quotients $\Gamma\backslash G/K$, and recall facts about its eigenfunctions. To maintain some generality in our discussion and, incidentally, certify invariance, it is useful to characterize the invariant Laplacian on certain automorphic quotients without choosing coordinates.

Let $G$ be a semisimple Lie group with maximal compact subgroup $K$ and discrete subgroup $\Gamma$. The universal enveloping algebra $U\mathfrak{g}$ is a canonical quotient of the universal associative algebra $A\mathfrak{g}$, also known as the tensor algebra $\large\otimes^{\bullet}\mathfrak{g}$. The associative algebra $U\mathfrak{g}$ is universal in the sense that any linear map $\varphi: \mathfrak{g}\to B$ to an associative algebra $B$ which respects the Lie bracket
\[
\varphi([x,y])=\varphi(x)\varphi(y)-\varphi(y)\varphi(x)\hspace{5mm}\text{(for }x,y\in\mathfrak{g})
\]
induces a unique associative algebra homomorphism $\Phi:U\mathfrak{g}\to B$. That is, the following diagram commutes:
\[
\begin{tikzcd}
U\mathfrak{g} \arrow[dr, dashed,"\exists!\hspace{1mm}\Phi"] \\
\mathfrak{g}  \arrow[u,"i"] \arrow[r,"\forall\hspace{1mm}\varphi",swap] & B.
\end{tikzcd}
\]
{The} Casimir {element} $\Omega$ is the simplest non-trivial $G$-invariant element in $U\mathfrak{g}$ and gives a two-sided $G$-invariant differential operator on $G$ which descends to the invariant Laplacian on quotients $G/K$ {and $\Gamma\backslash G/K$}. Casimir can be characterized as the image of the $G$-invariant element $\text{id}_{\mathfrak{g}}\in \End_{\C}(\mathfrak{g})$ under the map $\zeta$ given by \\

\textbf{\underline{{Coordinate-free characterization of Casimir}}}
 \[
\begin{tikzcd}
\End_{\C}(\mathfrak{g}) \arrow[r, "\approx"] \arrow[d,dash] \arrow[bend left, "\zeta"]{rrrr} & \mathfrak{g}\otimes\mathfrak{g}^*  \arrow[r, "\approx\hspace{1mm}via\hspace{1mm}\langle\rangle"]  & \mathfrak{g}\otimes\mathfrak{g}  \arrow[r, "incl"] & A\mathfrak{g} \arrow[r, "quot"] & U\mathfrak{g} \arrow[d,dash] \\
\text{id}_{\mathfrak{g}}\arrow[rrrr, mapsto]   &  &  &  & \zeta(\text{id}_{\mathfrak{g}})=\Omega.
\end{tikzcd}
\]
The first map in the composition is the inverse of the canonical isomorphism $V\otimes V^*\to \End V$ for finite-dimensional vector spaces. The second map is induced from $\langle-,v\rangle\mapsto v: V^*\to V$ for $\langle-,-\rangle$ a $G$-equivariant, non-degenerate bilinear form on $\mathfrak{g}$, such as the trace form $\langle x,y\rangle=\text{tr}(xy)$ on $\mathfrak{so}_r, \mathfrak{sl}_r$, and $\mathfrak{gl}_r$. The third map is an inclusion, and the fourth is a quotient map.\\

{The above prescription tells us how to express the Casimir element $\Omega=\zeta(\text{id}_\mathfrak{g})$ in terms of any basis $x_1,...,x_r$ of $\mathfrak{g}$: For $\lambda_1,...,\lambda_r$ the corresponding dual basis of the dual $\mathfrak{g}^*$, characterized by $\lambda_i(x_j)=\delta_{ij}$, and $x_1^*,...,x_r^*$ the dual basis for $\mathfrak{g}$ in terms of $\langle,\rangle$, characterized by $\langle x_i, x_j^*\rangle=\delta_{ij}$,}

 \[
\begin{tikzcd}
\End_{\C}(\mathfrak{g}) \arrow[r, "\approx"] \arrow[d,dash] \arrow[bend left, "\zeta"]{rrrr} & \mathfrak{g}\otimes\mathfrak{g}^* \arrow[d,dash] \arrow[r, "\approx\hspace{1mm}via\hspace{1mm}\langle\rangle"]  & \mathfrak{g}\otimes\mathfrak{g} \arrow[d,dash]\arrow[r, "incl"] & A\mathfrak{g} \arrow[d,dash] \arrow[r, "quot"] & U\mathfrak{g} \arrow[d,dash] \\
\text{id}_{\mathfrak{g}}\arrow[r, mapsto]   & \sum_{i} x_i\otimes\lambda_i \arrow[r, mapsto]  & \sum_i x_i\otimes x_i^* \arrow[r, mapsto] & \sum_i x_i\otimes x_i^* \arrow[r, mapsto] & \sum_i x_ix_i^*=\Omega.
\end{tikzcd}
\]
\\
That is, we have an intrinsic description of the Casimir element which does not depend on a choice of basis $x_1,...,x_r$. Furthermore, we can show that $\Omega=\zeta(\text{id}_{\mathfrak{g}})$ is $G$-invariant in $U\mathfrak{g}$ without showing by change-of-basis that the defined object is independent of choice of basis:
{\begin{theorem}
\label{CasInv}
The Casimir operator $\Omega = \zeta(\text{id}_{\mathfrak{g}})$ is a $G$-invariant element of $U\mathfrak{g}$.
\end{theorem}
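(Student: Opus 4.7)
The plan is to show that every arrow in the defining composite
\[
\zeta:\ \End_{\C}(\mathfrak{g})\ \xrightarrow{\sim}\ \mathfrak{g}\otimes \mathfrak{g}^*\ \xrightarrow{\sim}\ \mathfrak{g}\otimes\mathfrak{g}\ \hookrightarrow\ A\mathfrak{g}\ \twoheadrightarrow\ U\mathfrak{g}
\]
is $G$-equivariant when each object carries its natural $G$-action induced by $\mathrm{Ad}$, and then observe that $\mathrm{id}_{\mathfrak{g}}$ is manifestly $G$-fixed. $G$-equivariance of the composite then forces $\Omega=\zeta(\mathrm{id}_{\mathfrak{g}})$ to be $G$-invariant in $U\mathfrak{g}$. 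This bypasses any change-of-basis computation.

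First I would fix the $G$-actions: $G$ acts on $\mathfrak{g}$ by $\mathrm{Ad}$, on $\mathfrak{g}^*$ by the contragredient $(\mathrm{Ad}^*g)\lambda = \lambda\circ\mathrm{Ad}(g^{-1})$, on $\mathfrak{g}\otimes\mathfrak{g}$ and $\mathfrak{g}\otimes\mathfrak{g}^*$ diagonally, on $\End_{\C}(\mathfrak{g})$ by conjugation $T\mapsto \mathrm{Ad}(g)\,T\,\mathrm{Ad}(g^{-1})$, and on $A\mathfrak{g},U\mathfrak{g}$ by the unique algebra automorphisms extending $\mathrm{Ad}$. Then I would check, in order: (i) the canonical isomorphism $\mathfrak{g}\otimes\mathfrak{g}^*\to\End_{\C}(\mathfrak{g})$, $v\otimes\lambda\mapsto(u\mapsto\lambda(u)v)$, intertwines the diagonal and conjugation actions — a one-line verification; (ii) the isomorphism $\mathfrak{g}^*\to\mathfrak{g}$ induced by $\langle-,-\rangle$ is $G$-equivariant, which is exactly the hypothesis that the bilinear form is $G$-invariant (so the induced map on $\mathfrak{g}\otimes\mathfrak{g}^*\to\mathfrak{g}\otimes\mathfrak{g}$ is equivariant); (iii) the inclusion $\mathfrak{g}\otimes\mathfrak{g}\hookrightarrow A\mathfrak{g}=\bigotimes^{\bullet}\mathfrak{g}$ is equivariant because $G$ acts on $A\mathfrak{g}$ diagonally on each tensor slot; (iv) the quotient $A\mathfrak{g}\twoheadrightarrow U\mathfrak{g}$ is equivariant because the defining ideal $x\otimes y - y\otimes x-[x,y]$ is $\mathrm{Ad}$-stable, since $\mathrm{Ad}(g)[x,y]=[\mathrm{Ad}(g)x,\mathrm{Ad}(g)y]$.

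Next, I would note that $\mathrm{id}_{\mathfrak{g}}\in\End_{\C}(\mathfrak{g})$ commutes with every $\mathrm{Ad}(g)$, hence is a $G$-fixed vector for the conjugation action. Composing the equivariant maps sends the $G$-fixed element $\mathrm{id}_{\mathfrak{g}}$ to a $G$-fixed element of $U\mathfrak{g}$, which is the definition of $\Omega$ being $G$-invariant; equivalently, conjugation by every $g\in G$ fixes $\Omega$, so (passing to Lie algebra level) $[X,\Omega]=0$ in $U\mathfrak{g}$ for every $X\in\mathfrak{g}$.

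The only step with any subtlety is (ii): one must use that the chosen pairing $\langle-,-\rangle$ is $G$-equivariant (i.e.\ $\mathrm{Ad}(g)$-invariant), which holds for the trace form on $\mathfrak{so}_r,\mathfrak{sl}_r,\mathfrak{gl}_r$ because $\mathrm{tr}(\mathrm{Ad}(g)x\cdot\mathrm{Ad}(g)y)=\mathrm{tr}(gxg^{-1}gyg^{-1})=\mathrm{tr}(xy)$, and for the Killing form in general. Everything else is formal functoriality of tensor, dualization, and the universal property of $U\mathfrak{g}$. I do not anticipate a serious obstacle; the main content of the theorem is packaging these observations so that no basis is ever chosen.
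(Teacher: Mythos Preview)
Your proposal is correct and follows exactly the paper's approach: the paper's proof simply asserts that ``the action of $G$ respects all maps in the composition, $\zeta$ is $G$-equivariant,'' then notes $\mathrm{id}_{\mathfrak{g}}$ commutes with every endomorphism and concludes $g\,\zeta(\mathrm{id}_\mathfrak{g})\,g^{-1}=\zeta(g\,\mathrm{id}_\mathfrak{g}\,g^{-1})=\zeta(\mathrm{id}_\mathfrak{g})$. Your version is a more detailed expansion of the same argument, explicitly verifying equivariance of each arrow where the paper leaves it implicit.
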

\begin{proof}
Since the action of $G$ respects all maps in the composition, $\zeta$ is $G$-equivariant. Furthermore, $\text{id}_\mathfrak{g}$ commutes with any endomorphism of $\mathfrak{g}$, so $\Omega=\zeta(\text{id}_\mathfrak{g})$ is certainly $G$-invariant element of $U\mathfrak{g}$:
\[
g\hspace{1mm}\zeta(\text{id}_\mathfrak{g})\hspace{1mm}g^{-1} = \zeta(g\hspace{1mm}\text{id}_\mathfrak{g}\hspace{1mm}g^{-1})= \zeta(g\hspace{1mm}g^{-1}\hspace{1mm}\text{id}_\mathfrak{g})=\zeta(\text{id}_\mathfrak{g}).
\]
\end{proof}}
{Incidentally, this intrinsic description renders trivial the proof of invariance of Casimir on automorphic quotients:} 

{\begin{theorem}
For $G$ a semisimple Lie group with maximal compact $K$ and discrete subgroup $\Gamma$, the Casimir operator $\Omega = \zeta(\text{id}_{\mathfrak{g}})$ descends to an invariant differential operator on  $\Gamma\backslash G/K$.
\end{theorem}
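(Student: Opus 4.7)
The plan is to exploit the algebra homomorphism from $U\mathfrak{g}$ into left-$G$-invariant differential operators on $G$ afforded by the right regular representation, and then to use Theorem \ref{CasInv} to upgrade left-invariance to bi-invariance, so that $\Omega$ descends all the way to $\Gamma\backslash G/K$.

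First, I would recall the standard fact that $U\mathfrak{g}$ acts on $C^\infty(G)$ by extending
\[
(Xf)(g) \;=\; \tfrac{d}{dt}\Big|_{t=0} f\bigl(g\exp(tX)\bigr) \qquad (X\in\mathfrak{g})
\]
multiplicatively to all of $U\mathfrak{g}$. Because left and right translations on $G$ commute, every operator in the image of this homomorphism is automatically left-$G$-invariant, so in particular $\Omega$, viewed as a differential operator on $G$ in this way, commutes with all left translations $L_g$, $g\in G$.

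Next I would translate the adjoint-action $G$-invariance of $\Omega\in U\mathfrak{g}$ (Theorem \ref{CasInv}) into a statement about the associated differential operator. A short computation shows that for $h\in G$ and $X\in\mathfrak{g}$, the operator associated to $\mathrm{Ad}(h)X$ is $R_h\circ X\circ R_{h^{-1}}$, where $R_h$ denotes right translation; by multiplicativity, the identity $\mathrm{Ad}(h)\Omega=\Omega$ in $U\mathfrak{g}$ becomes $R_h\circ\Omega\circ R_{h^{-1}}=\Omega$, i.e.\ $\Omega$ commutes with right translations as well. Combined with the automatic left-invariance, $\Omega$ is then bi-invariant on $G$.

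Descent is now immediate. Left $G$-invariance specializes to left $\Gamma$-invariance, so $\Omega$ preserves $\Gamma$-invariant functions and descends to a differential operator on $\Gamma\backslash G$. Right $G$-invariance specializes to right $K$-invariance, so $\Omega$ preserves the right-$K$-invariant subspace and descends further to $\Gamma\backslash G/K$. The bi-invariance upstairs is what justifies calling the descended operator \emph{invariant}. The main obstacle, as in much of this subsection, is purely notational bookkeeping: carefully matching the adjoint action on $U\mathfrak{g}$ with conjugation of differential operators by right translations, and keeping straight which side $\Gamma$ and $K$ act on. Once this dictionary is in place, no additional analytic work is required, and the result is a formal corollary of Theorem \ref{CasInv}.
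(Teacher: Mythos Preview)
Your proposal is correct and follows essentially the same approach as the paper: both use the right-regular action of $U\mathfrak{g}$ to obtain automatic left $G$-invariance, then invoke Theorem~\ref{CasInv} ($\mathrm{Ad}\,G$-invariance of $\Omega$) to obtain right $G$-invariance, and conclude descent to $\Gamma\backslash G/K$. Your version is slightly more explicit in spelling out the dictionary between $\mathrm{Ad}(h)$ on $U\mathfrak{g}$ and conjugation by $R_h$ on differential operators, but the logical structure is identical.
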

\begin{proof}
On one hand, the ({right} differentiation) action of $x\in\mathfrak{g}$ on functions $f$ on $G$ by
\[
(x\cdot f)(g)=\frac{d}{dt}\bigg{|}_{t=0}f(g\cdot e^{tx})
\]
is {left} $G$-invariant (and therefore left $\Gamma$-invariant) by associativity. Certainly, this extends to the left $G$-invariant action of $U\mathfrak{g}$. On the other hand, if the differential operator comes from the $\text{Ad } G$-invariant elements of $U\mathfrak{g}$ given by
\[
(U\mathfrak{g})^G=\{\alpha\in U\mathfrak{g}: g\alpha g^{-1}=\alpha, \text{ for all }g\in G\}, 
\] 
then the action is right $G$-invariant (and therefore right $K$-invariant) by the commutation property. By Theorem \ref{CasInv}, $\Omega\in (U\mathfrak{g})^G$ and, thus, descends to the quotient $\Gamma\backslash G/K$.
\end{proof}}
{\subsection{Eigenvalues of integral operators on eigenfunctions of Casimir}}
As in the proof of a pre-trace formula in, for example, \cite{Gar18}*{Section 12.1}, we recall that the eigenvalues of integral operators on Eisenstein series or strong-sense cuspforms are essentially determined by their eigenvalues for the Laplacian.  This follows from the non-trivial fact that a suitable representation $V_f$ generated by right translates of a $\Delta$-eigenfunction $f$ is isomorphic to a subquotient of the principal series representation $I_s$.   That subtler point is an instance of a subquotient theorem \cite{HC54}, which was eventually strengthened to the subrepresentation theorem \cite{Cas78/80},\cite{CM82}. Eigenvalues depend only on the isomorphism class of $V_f$, so they can be computed on the (much simpler) subquotient of the principal series representation. 

We follow the discussion in \cite{Gar18}*{Chapter 12}. Consider an integral operator attached to a compactly supported measure $\eta$ on the group $G$ acting on a quasi-complete, locally convex topological vector space $V$ of automorphic forms. For a continuous action $G\times V\to V$, the action of $\eta$ is
\[
\eta \cdot v = \int_{G}g\cdot v\hspace{1mm} d\eta(g),\hspace{5mm}\text{for }v\in V,
\]
as a Gelfand-Pettis integral.

Quasi-complete, locally convex topological vector spaces $V,W$ with continuous actions $G\times V\to V$ and $G\times W\to W$ are representations of $G$, and continuous $\C$-linear maps $T:V\to W$ respecting the action of $G$ are $G$-homomorphisms. By properties of Gelfand-Pettis integrals, $G$-homomorphisms commute with the action of integral operators attached to compactly supported measures:
\[
T(\eta \cdot v) = T\left(\int_{G}g\cdot v\hspace{1mm} d\eta(g)\right)= \int_{G}T(g\cdot v)\hspace{1mm} d\eta(g)=\int_{G}g\cdot T(v)\hspace{1mm} d\eta(g) = \eta\cdot T(v).
\]
In particular, $G$-homomorphisms preserve the eigenvalues and eigenvectors of integral operators. 

For $v$ in a (quasi-complete, locally-convex) $G$-representation space $V$, the subrepresentation generated by $v$ is the topological closure of the span of finite linear combinations of images $g\cdot v$ of $v$ by $g\in G$. By properties of Gelfand-Pettis integrals (as shown in \cite{Gar18}*{Claim 12.1.3}, for example), a strong-sense cuspform or Eisenstein series $f$ is the unique $K$-invariant vector in the subrepresentation $V_f$ it generates under right translation, up to a constant. For any left and right $K$-invariant compactly supported measure $\eta$, the integral operator action 
\[
(\eta \cdot f)(x) = \int_{G}g\cdot f(x)\hspace{1mm} d\eta(g) = \int_{G} f(xg)\hspace{1mm} d\eta(g)
\]
produces another right $K$-invariant vector in the subrepresentation space $V_f$; thus $\eta\cdot f$ is a scalar multiple of $f$. Let $\chi_f(\eta)\in \C$ denote the eigenvalue:
\[
\eta\cdot f = \chi_f(\eta) \cdot f.
\]
Since $G$-homomorphisms preserve the eigenvalues of integral operators, the scalar $\chi_f(\eta)$ can be computed in any image of $V_f$. For $\Omega\cdot f = r\ell(r-\ell)s(s-1)\cdot f$, the representation generated by $f$ is isomorphic to an unramified principal series
\[
I_s = \left\{\varphi \in C^\infty(G): \varphi\left(\begin{pmatrix}a & *\\ 0 & d\end{pmatrix}\cdot g\right)=\frac{|\det a|^{(r-\ell)s}}{|\det d|^{\ell s}}\cdot \varphi(g)\right\},\hspace{5mm}(\text{with }s\in \C)
\]
under right translation by $G$. The Iwasawa decomposition $G = P^{\ell, r-\ell}\cdot K$ shows that the space of $K$-fixed vectors is one-dimensional. {By an explicit form of the proof of the Harish-Chandra isomorphism (see \cite{HC51} and \cite{Gar17b}, for example)}, we can compute eigenvalues of $\Omega$ on Eisenstein series or strong-sense level-one/spherical cuspforms by computing eigenvalues on $I_s$.

\subsection{Degenerate Eisenstein series attached to maximal proper parabolics} 
In this section, we define and recall analytical properties of degenerate Eisenstein series $E_{s,\varphi}^P$ attached to maximal proper parabolic subgroups {$P=P^{\ell,r-\ell}\subset SL_r(\R)$}. For this classical geometric algebra, we primarily follow the discussion in \cite{Gar18}*{Sections 3.1, 3.11}.  

\subsubsection*{Parabolic subgroups of $SL_r(\R)$}
\label{Parabolic}
Let $k$ be a number field. A flag $F$ in $k^r$ is a nested sequence of $k$-subspaces
\[
V_1\subset ... \subset V_m \subset k^r
\]
The corresponding parabolic subgroup $P = P^F$ is the stabilizer of the flag $F$. The maximal proper parabolic subgroups are stabilizers $P^{V}$ of flags consisting of single proper subspaces $V\subset k^r$. 

With $e_1,e_2,..., e_r$ the standard basis for $k^r$, identify $k^d=ke_1+...+k e_d$. By transitivity of $G$ on ordered bases of $k^r$, every orbit in the action of $G$ on flags has a unique representative among the standard flags, namely for some ordered partition $d_1+d_2+...+d_m=r$ with $0<d_j\in  \Z$, the corresponding standard flag is
\[
F^{d_1,...,d_m} = \left(k^{d_1}\subset k^{d_1+d_2}\subset ....\subset k^{d_1+d_2+...+d_m}\right).
\]

The standard proper parabolic subgroup $P^{d_1,...,d_m}$ of $G$ is the stabilizer of the flag $F^{d_1,...,d_m}$. The standard {maximal} proper parabolics are the block-upper triangular matrices
\[
P^{\ell, r-\ell}=\left\{\begin{pmatrix} a & b\\
0 & d
\end{pmatrix}: a \in SL_\ell(k), b = \ell \times (r-\ell), d\in SL_{r-\ell}(k)\right\},
\]
where the off-diagonal blocks are sized to fit. The unique {associate} parabolic of $P=P^{\ell,r-\ell}$ is the subgroup $P^{r-\ell,\ell}$, defined as expected. We observe that $P^{\ell,r-\ell}$ is {self-associate} when $r=2\ell$.

The unipotent radical $N^P$ of a parabolic $P$ stabilizing a flag $F=(V_1\subset ... \subset V_m)$ is the subgroup that fixes the quotients $V_m / V_{m-1}$ pointwise. For the standard maximal proper parabolic $P^{\ell, r-\ell}$, the unipotent radical is
\[
N^{\ell, r-\ell} = \left\{\begin{pmatrix} 1_\ell &*\\
0 & 1_{r-\ell}
\end{pmatrix}
\right\}.
\]

The standard Levi-Malcev component $M^P$ is the subgroup of $P$ with all the blocks above the diagonal 0. For the standard maximal proper parabolic $P^{\ell, r-\ell}$, the Levi component is
\[
M^{\ell, r-\ell} = \left\{\begin{pmatrix} a & 0\\
0 & d
\end{pmatrix}
\right\}.
\]

{The proof of the Iwasawa decomposition $G = P^{\ell, r-\ell} \cdot K$ is an iteration of the $r=2$ case, and the Levi-Malcev decomposition $P^{\ell, r-\ell} = N^{\ell, r-\ell} \cdot M^{\ell, r-\ell}$ is simply an expression of the behavior of matrix multiplication in block decompositions.}
\subsubsection*{Degenerate Eisenstein series}
\label{Degenerate}
{Degenerate Eisenstein series play a subtler role in Plancherel or the spectral decomposition of $L^2(\Gamma\backslash G/K)$ than do cuspidal-data Eisenstein series; however, as was known to Selberg and appeared in \cite{Lan76}, they do appear as $(r-2)$-fold residues of a minimal-parabolic Eisenstein series. However, as multi-residues of Eisenstein series are not necessary for the current discussion, we adapt \cite{Gar18}*{Section 3.11} and \cite{Gar09}.} 

Let $G=SL_r(\R)$, $P= P^{\ell,r-\ell}$ a maximal proper parabolic subgroup, $K=SO(r,\R)$ the maximal compact, $\Gamma = SL_r(\Z)$, and
\[
\varphi^P_{s}(nmk)=\frac{|\det a|^{(r-\ell)s}}{|\det d|^{\ell s}},\hspace{5mm} m=\begin{pmatrix} a & 0\\ 0 & d\end{pmatrix}.
\]
Note that $\varphi_s^P$ is the unique spherical vector in a (degenerate) unramified principal series representation of $G$, and is, therefore, left $N_{\A}^P$ invariant. Choosing a convenient basis for $\mathfrak{g}$, we can compute the $\Omega$-eigenvalue explicitly:
\begin{lemma}
\label{Eigenvalue}
For $\Omega$ the Casimir operator on $G$,
\[
\Omega\cdot \varphi^P_s=r\ell(r-\ell)(s^2-s)\cdot\varphi^P_{s}.
\]
\end{lemma}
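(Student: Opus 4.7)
The plan is to use centrality of $\Omega$ to reduce to a single scalar computation at the identity, then reduce $\Omega$ modulo a left ideal annihilating $\varphi^P_s$ to get a tractable element of $U(\mathfrak{a})$ whose eigenvalue is immediate.

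Since $\Omega \in Z(U\mathfrak{g})$ commutes with right translation and $\varphi^P_s$ is the essentially-unique spherical vector of the principal series it generates, $\Omega \cdot \varphi^P_s$ is a scalar multiple of $\varphi^P_s$; because $\varphi^P_s(1) = 1$ the eigenvalue equals $(\Omega \cdot \varphi^P_s)(1)$. Using the basis $\{E_{ij}\}_{i \ne j}$ for the off-diagonal part of $\mathfrak{sl}_r$ together with any basis $\{H_k\}$ of the traceless diagonal subalgebra $\mathfrak{a}$, the trace-form duals are $E_{ji}$ and $\{H_k^*\} \subset \mathfrak{a}$, yielding
\[
\Omega \;=\; \sum_{i \ne j} E_{ij} E_{ji} \;+\; \Omega_{\mathfrak{a}}, \qquad \Omega_{\mathfrak{a}} := \sum_k H_k H_k^*.
\]

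Next I would reduce $\Omega$ modulo the left ideal $U\mathfrak{g}\cdot\mathfrak{k}$, which annihilates $\varphi^P_s$ under right-regular action by right $K$-invariance. The key observation is that $E_{ij} - E_{ji} \in \mathfrak{k}$, so $E_{ji} \equiv E_{ij} \pmod{\mathfrak{k}}$; combined with $E_{ij}^2 = 0$ and $[E_{ij},E_{ji}] = E_{ii} - E_{jj}$, the products collapse as $E_{ij}E_{ji} \equiv 0$ for $i < j$ and $E_{ij}E_{ji} \equiv E_{ii} - E_{jj}$ for $i > j$. Summing,
\[
\sum_{i \ne j} E_{ij} E_{ji} \;\equiv\; \sum_{k=1}^{r} (2k - 1 - r)\,E_{kk} \pmod{U\mathfrak{g}\cdot\mathfrak{k}},
\]
a diagonal element of $\mathfrak{a}$ that coincides with $-2\rho$ under the trace-form identification $\mathfrak{a}\cong\mathfrak{a}^*$.

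Finally I would evaluate at the identity. Since $e^{tH} \in A \subset P$ for $H \in \mathfrak{a}$, left $P$-covariance gives $\varphi^P_s(e^{tH}) = \chi_s(e^{tH})$, so $(H \cdot \varphi^P_s)(1) = d\chi_s(H)$. Explicit computation of $\chi_s$ on diagonals yields $d\chi_s\bigl(\sum_k (2k-1-r)E_{kk}\bigr) = -r\ell(r-\ell)\,s$. For the second-order piece, using an orthonormal basis of $\mathfrak{a}$ with respect to the trace form,
\[
(\Omega_{\mathfrak{a}} \cdot \varphi^P_s)(1) \;=\; \|d\chi_s\|^2 \;=\; \text{tr}\bigl((sH_P)^2\bigr) \;=\; r\ell(r-\ell)\,s^2,
\]
where $H_P = \text{diag}\bigl((r-\ell)\,1_\ell,\, -\ell\, 1_{r-\ell}\bigr)$ is the trace-form dual of $d\chi_s$. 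Adding the two contributions gives $(\Omega \cdot \varphi^P_s)(1) = r\ell(r-\ell)(s^2 - s)$, as asserted. The main conceptual subtlety is reconciling the right-regular action with the reduction modulo $U\mathfrak{g}\cdot\mathfrak{k}$; once that is in place, the computation reduces to routine manipulations inside $U(\mathfrak{a})$, where the left $P$-covariance translates directly into a right-eigenfunction property at the identity.
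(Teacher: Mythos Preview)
Your numerics are correct, but there is a real gap in the reduction. The assertion ``$E_{ij}^2=0$'' is a relation in the matrix algebra $\End(\C^r)$, not in $U\mathfrak{g}$, and it fails modulo $U\mathfrak{g}\cdot\mathfrak{k}$ as well. In $\mathfrak{sl}_2$, with $P=E_{12}+E_{21}\in\mathfrak{p}$ and $K=E_{12}-E_{21}\in\mathfrak{k}$, one has $KP\equiv 2H$ and hence $E_{12}^2=\tfrac14(P+K)^2\equiv\tfrac14 P^2+\tfrac12 H\pmod{U\mathfrak{g}\cdot\mathfrak{k}}$, which is nonzero (and one checks $(\tfrac14 P^2+\tfrac12 H)\cdot\varphi^P_s$ does vanish at the identity, but not as a function). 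So modding out by $U\mathfrak{g}\cdot\mathfrak{k}$ alone does not land you in $U(\mathfrak{a})$, and your congruence $\sum_{i\ne j}E_{ij}E_{ji}\equiv\sum_k(2k-1-r)E_{kk}$ is false in that quotient. What actually makes those terms disappear is left $N^{\min}$-invariance of $\varphi^P_s$, which you invoke only implicitly under ``left $P$-covariance'': for $i<j$ one has $\varphi^P_s(e^{tE_{ij}})\equiv 1$, whence $(E_{ij}^2\cdot\varphi^P_s)(1)=0$. Equivalently, you must reduce modulo $\mathfrak{n}\,U\mathfrak{g}+U\mathfrak{g}\,\mathfrak{k}$; \emph{that} quotient is $U(\mathfrak{a})$, and there your congruences hold and the rest of your computation (the $-2\rho$ piece and $\mathrm{tr}((sH_P)^2)=r\ell(r-\ell)s^2$) goes through.

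The paper's route is the same reduction with the $\mathfrak{k}$-step omitted: it rewrites $\Omega=\sum_i h_i^2+\sum_{i<j}(2x_{ij}y_{ji}-h_i+h_j)$, uses centrality of $\Omega$ to pass from the right action to the left, and kills each $x_{ij}y_{ji}$ directly by left $N^{\min}$-invariance of $\varphi^P_s$, leaving the same diagonal operator you arrive at. Your detour through $\mathfrak{k}$ is harmless once the $\mathfrak{n}$-reduction is supplied, but it is the $\mathfrak{n}$-invariance, not the $\mathfrak{k}$-invariance, that does the essential work here.
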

\begin{proof}
Let $h_i$ be the diagonal matrix with 1 at the $i^{th}$ position and 0's elsewhere; furthermore, for $i<j$, let $x_{ij}$ be the matrix with a 1 at the $ij^{th}$ location and 0's elsewhere, and for $i>j$, let $y_{ij}$ be the matrix with a 1 at the $ij^{th}$ location and 0's elsewhere. The $h_i, x_{ij}$, and $y_{ij}$, form an orthogonal basis $\{x_k\}$ for $\mathfrak{g}$ with $h_i^*=h_i$, $x_{ij}^*=y_{ji}$, $y_{ij}^*=x_{ji}$, and $[x_{ij},y_{ji}]=-h_i + h_j$. Thus, we have
\begin{align*}
\Omega &= \sum_{k} x_kx^*_k= \sum_{1\leq i\leq r} h_i^2 + \sum_{i<j}x_{ij}y_{ji}  + \sum_{i>j}y_{ij}x_{ji} = \sum_{1\leq i\leq r} h_i^2 + \sum_{i<j}x_{ij}y_{ji}  + y_{ji}x_{ij} \\
&= \sum_{1\leq i\leq r}h_i^2 + \sum_{i<j}2x_{ij}y_{ji}  + [x_{ij}, y_{ji}]=\sum_{1\leq i\leq r}h_i^2 + \sum_{i<j}2x_{ij}y_{ji}  - h_i + h_j
\end{align*}
Since $x_{ij}\in \mathfrak{n}$ and acts first (after conjugating) in the middle term $\sum_{i<j}2x_{ij}y_{ij}$,  this term acts by 0 on left $N^P$ invariant functions, such as $\varphi^P_s$. Thus,
\[
\Omega \cdot \varphi^P_s = \sum_{i}h_i^2  \cdot \varphi^P_s + \sum_{i<j} (-h_i+h_j) \cdot \varphi^P_s.
\]
Although, as usual, $h\in \mathfrak{g}$ acts on the \textit{right}, Casimir commutes with conjugation by $G$, so we can let the associated differential operators $h_i$ act on the \textit{left}. Thus, by direct calculation,
\begin{align*}
h_i\cdot \varphi^P_s(g) &= \begin{cases} s(r-\ell)\cdot \varphi_s^P(g) & 1 \leq  i \leq\ell \\
-s\ell\cdot \varphi^P_s(g) & \ell < i \leq r,
\end{cases}
\end{align*}
such that,
\begin{align*}
 \sum_{i}h_i^2  \cdot \varphi^P_s &=  \sum_{1\leq i\leq \ell }h_i^2  \cdot \varphi^P_s  +  \sum_{\ell<i\leq r}h_i^2  \cdot \varphi^P_s \\
 &= [\ell\left(s(r-\ell)\right)^2 +(r-\ell)(-s\ell)^2] \cdot \varphi_s^P = s^2\ell(r-\ell)r\cdot\varphi_s^P\\
 \sum_{i<j}(-h_i+h_j)\cdot \varphi^P_s &= \sum_{1 \leq i<j\leq \ell}0 \cdot \varphi^P_s + \sum_{\ell< i<j\leq r}0 \cdot \varphi^P_s +  \sum_{1\leq i\leq \ell < j\leq r}(-h_i+h_j)\cdot \varphi^P_s \\
 &= -(r-\ell)\sum_{1\leq i\leq \ell}h_i\cdot \varphi^P_s + \ell \sum_{\ell < j \leq r} h_j\cdot \varphi^P_s \\
 &= [-(r-\ell)\ell s(r-\ell) + \ell(r-\ell)(-s\ell)]\cdot \varphi_s^P = -s\ell(r-\ell)r\cdot \varphi_s^P.
\end{align*}
Summing these identities yields our claim.
\end{proof}

The degenerate Eisenstein series associated to $P$ and corresponding to $\varphi^P_s$ is 
\[
E^P_{s,\varphi}(g) = \sum_{{\gamma\in P\cap \Gamma \backslash \Gamma}} \varphi^P_{s}(\gamma\cdot g).
\]
We recall basic analytic properties which will be useful in the sequel:

\begin{theorem}
The degenerate Eisenstein series $E^P_{s,\varphi}(g)$ converges absolutely and uniformly on compacta for $\text{Re}(s)>{1}$.
\end{theorem}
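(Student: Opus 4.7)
The plan is to dominate the series by a convergent Godement-style majorant and then derive uniformity on compacta from continuous dependence on $g$. Since $\varphi^P_s(nmk) = |\det a|^{(r-\ell)s}/|\det d|^{\ell s}$ is a positive real exponential in $s$, we have $|\varphi^P_s| = \varphi^P_{\text{Re}(s)}$, so it suffices to prove absolute convergence for real $s > 1$. The next step is to parameterize the coset space $(P \cap \Gamma) \backslash \Gamma$ by the set of primitive $(r - \ell)$-dimensional sublattices of $\Z^r$, generalizing the bijection $P^{r-1,1}_k \backslash G_k \longleftrightarrow k^\times \backslash (k^r - \{0\})$ used in Lemma \ref{heightLemma}: $\Gamma$ acts transitively on such sublattices, with stabilizer $P \cap \Gamma$ of the standard sublattice $\Z e_{\ell+1} + \cdots + \Z e_r$.

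Next, we reinterpret $\varphi^P_s(\gamma g)$ as a power of a height on the corresponding sublattice. Writing the Iwasawa decomposition $\gamma g = nmk$ with $m = \mathrm{diag}(a,d)$, the constraint $|\det g| = 1$ gives $\varphi^P_s(\gamma g) = |\det d|^{-rs}$, and $|\det d|$ equals the Euclidean covolume of the sublattice spanned by the bottom $(r - \ell)$ rows of $\gamma g$, equivalently the norm of the Pl\"ucker vector $w_{\ell+1} \wedge \cdots \wedge w_r \in \bigwedge^{r-\ell} \R^r$. The series is then a sum of powers of norms of primitive decomposable Pl\"ucker vectors, which is dominated by the Epstein-type sum $\sum_{0 \ne v \in \bigwedge^{r-\ell}\Z^r} \|v \cdot (\bigwedge^{r-\ell} g)\|^{-rs}$. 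Induction on $r$, terminating in the classical $P^{r-1,1}$ case where the sum reduces to $\sum_{\text{primitive } v \in \Z^r} |vg|^{-rs}$ (absolutely convergent for $s > 1$), then gives convergence for $\text{Re}(s) > 1$. Uniform convergence on compacta follows from continuity of the majorant in $g$ and the height comparison property (item 2 of the list in Section \ref{ReductionTheory}).

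The main obstacle is step of identifying the correct convergence abscissa as exactly $\text{Re}(s) > 1$ in this normalization. A direct Epstein majorization over the full exterior power $\bigwedge^{r-\ell}\Z^r$ only yields the weaker abscissa $rs > \binom{r}{r-\ell}$, so one must either exploit the Pl\"ucker relations restricting summands to decomposable vectors, or---more cleanly---induct on $r$ by peeling off a primitive last row and recognizing the remaining sum as an Eisenstein series for $SL_{r-1}(\Z)\backslash SL_{r-1}(\R)$. The other steps (reduction to real $s$, coset parameterization, the Iwasawa computation, and compacta uniformity) are essentially routine.
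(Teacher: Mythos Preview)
Your proposal is correct and is essentially an unwinding of what the paper's proof cites as a black box: the paper's entire proof is the sentence ``this follows from Godement's Criterion (see \cite{Bor66a}), as in the proof of \cite{Gar18}*{Claim 3.11.1}.'' Your sublattice/Pl\"ucker reinterpretation and the induction on $r$ terminating in the $P^{r-1,1}$ case are exactly how one proves Godement's criterion concretely for $GL_r$, so the two approaches coincide once the citation is unpacked; you have simply supplied the content the paper defers to the references, and you have already correctly flagged the one step (sharp abscissa via induction rather than the crude full-exterior-power Epstein bound) that requires care.
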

\begin{proof}
For example, this follows from Godement's Criterion (see \cite{Bor66a}), as in the proof of \cite{Gar18}*{Claim 3.11.1}.
\end{proof}

{Of course, since $\varphi^P_s$ is right $K$-invariant and winding up to form the corresponding Eisensteins series is a $G$-homomorphism}, the degenerate Eisenstein series is also right $K$-invariant in the region of convergence. Furthermore, the meromorphic continuation of the degenerate Eisenstein series is right $K$-invariant by the identity principle.

{\begin{theorem}
The degenerate Eisenstein series $\displaystyle E^P_{s,\varphi}$ is an eigenvector of Casimir $\Omega$ with eigenvalue $\lambda_s = r\ell(r-\ell)s(s-1)$. In particular, the eigenvalue is invariant under $s\to 1-s$.
\end{theorem}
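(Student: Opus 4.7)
The plan is to reduce the claim to Lemma \ref{Eigenvalue} by moving the Casimir operator past the summation defining $E^P_{s,\varphi}$, and then to extend the resulting identity to the full meromorphic continuation via the identity principle; the symmetry under $s \mapsto 1-s$ is then a trivial manipulation of the polynomial $s(s-1)$.

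To justify passing $\Omega$ through the sum, I would first exploit the fact, already established in the preceding theorem, that $\Omega \in (U\mathfrak{g})^G$ acts by right differentiation on functions on $G$, and that right differentiation is left-translation invariant. Concretely, for any $h \in G$ and smooth $f$ on $G$, one has $\Omega(f \circ L_h) = (\Omega f) \circ L_h$, so that $\Omega\bigl(\varphi^P_s(\gamma \cdot g)\bigr) = (\Omega \varphi^P_s)(\gamma \cdot g)$ for each $\gamma \in \Gamma$. Hence, in the region $\operatorname{Re}(s) > 1$ where the defining series converges absolutely and uniformly on compacta, it suffices to interchange $\Omega$ with the summation over $\gamma \in P \cap \Gamma \backslash \Gamma$. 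This interchange is justified provided the term-by-term differentiated series converges absolutely and uniformly on compacta as well; this follows from the same Godement-type majorization used to prove convergence of $E^P_{s,\varphi}$ itself, since $\Omega$ is a second-order differential operator with constant (in the enveloping-algebra sense) coefficients, and the bounds for $\varphi^P_s$ on Siegel sets upgrade in a standard way to bounds for all fixed-order right derivatives of $\varphi^P_s$.

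Granting the interchange, Lemma \ref{Eigenvalue} gives
\[
\Omega \cdot E^P_{s,\varphi}(g) \;=\; \sum_{\gamma \in P\cap\Gamma \backslash \Gamma} (\Omega \cdot \varphi^P_s)(\gamma \cdot g) \;=\; r\ell(r-\ell)s(s-1) \cdot E^P_{s,\varphi}(g)
\]
throughout $\operatorname{Re}(s) > 1$. The eigenvalue identity then extends to the full meromorphic continuation of $E^P_{s,\varphi}$ by the identity principle, applied pointwise in $g$. Finally, the invariance of $\lambda_s = r\ell(r-\ell) s(s-1)$ under $s \mapsto 1-s$ follows from the elementary observation that $(1-s)\bigl((1-s)-1\bigr) = (1-s)(-s) = s(s-1)$.

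The main obstacle, and the only step that is not immediate from the intrinsic description of Casimir already developed in the paper, is the analytic justification of differentiating the Eisenstein series term by term. Everything else is formal: left-invariance of $\Omega$ is built into its construction as an element of $(U\mathfrak{g})^G$, the principal-series eigenvalue computation has already been carried out in Lemma \ref{Eigenvalue}, and the symmetry is algebraic. Thus the essential analytic content is the majorization argument that upgrades uniform convergence on compacta of $E^P_{s,\varphi}$ to uniform convergence on compacta of its second derivatives, legitimizing the interchange of $\Omega$ with the sum.
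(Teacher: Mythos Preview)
Your proposal is correct and follows essentially the same route as the paper: pass $\Omega$ through the sum using left-invariance and $C^\infty$ convergence in $\operatorname{Re}(s)>1$, apply Lemma \ref{Eigenvalue}, and extend by the identity principle. The paper simply ``grants convergence in the $C^\infty$ topology'' where you supply a more explicit Godement-type justification, but otherwise the arguments coincide.
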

\begin{proof}
Granting convergence in the $C^{\infty}$ topology, in $\text{Re}(s)>1$, and using the fact that $\Omega$ commutes with translations by $\Gamma$, letting $\Omega\cdot \varphi^P_{s} = \lambda_s\cdot \varphi^P_{s}$ (as in Lemma \ref{Eigenvalue}),
\begin{align*}
\Omega \cdot E^P_{s,\varphi} &= \Omega \sum_{{\gamma\in P\cap \Gamma \backslash \Gamma}} \varphi^P_{s}(\gamma\cdot g) = \sum_{{\gamma\in P\cap \Gamma \backslash \Gamma}} (\Omega \cdot \varphi^P_{s})(\gamma\cdot g) \\
&= \lambda_s \sum_{{\gamma\in P\cap \Gamma \backslash \Gamma}} \varphi^P_{s}(\gamma\cdot g)=\lambda_s\cdot E^P_{s,\varphi}.
\end{align*}
Finally, the eigenvalue property holds for the meromorphic continuation of $E^P_{s,\varphi}$ by the identity principle.
\end{proof}
}

\subsection{Epstein zeta functions}
\label{Epstein}
For $Q$ a real, positive-definite $r\times r$ matrix, the corresponding Epstein zeta function is
\[
Z_r(Q, s) = \sum_{0\neq v\in \Z^r}(v\hspace{1mm}Q\hspace{1mm}v^\top )^{-s}, \hspace{5mm}\text{Re}(s)>\frac{r}{2}.
\]
\cite{Eps03} introduced this function as a more general function satisfying a functional equation similar to the one satisfied by the Riemann zeta function. With $Q=gg^\top $ for $g\in SL_r(\R)$, we obtain a family of functions on $SL_r(\R)$, parametrized by $s\in \C$. To clarify the normalization (such that the first pole is at $s=1$) and make certain analytical assertions precise, recall:

\begin{lemma} For $g\in SL_r(\R)$, the Epstein zeta function $Z_r(gg^\top ,s)$ is essentially a degenerate Eisenstein series. More precisely, 
\[
Z_r\left(gg^\top ,\frac{rs}{2}\right)=2\zeta\left({rs}\right) E^P_{s,\varphi}(g),
\] 
for $\zeta(s)$ Riemann's zeta function and $E^P_{s,\varphi}(g)$ a degenerate Eisenstein series associated to the $(r-1,1)$-parabolic subgroup of $SL_r(\R)$.
\label{EpEis}
\end{lemma}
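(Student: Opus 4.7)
The plan is to unfold both sides into sums indexed by $\Z^r-\{0\}$ (after appropriate factoring) and match them termwise. The main analytic ingredient is the well-known trick of separating a vector $v\in\Z^r-\{0\}$ into its divisibility content and its primitive part.

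First, I would write every nonzero $v\in\Z^r$ uniquely as $v=m\cdot v_0$ with $m\in\Z_{\geq 1}$ and $v_0\in\Z^r$ primitive (coordinates with gcd $1$). Since $(vQv^\top)^{-s}=m^{-2s}(v_0Qv_0^\top)^{-s}$, summing first over $m$ gives
\[
Z_r(Q,s)=\zeta(2s)\sum_{v_0\text{ primitive}}(v_0 Q v_0^\top)^{-s}.
\]
Replacing $s$ by $rs/2$ produces exactly the $\zeta(rs)$ factor appearing in the claim.

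Next, I would identify the set of primitive vectors with a coset space for $\Gamma=SL_r(\Z)$. Since $SL_r(\Z)$ acts transitively on primitive vectors of $\Z^r$ and the stabilizer of $e_r$ (acting on the right) is $P^{r-1,1}(\Z)\cap\Gamma$, while the only unit scalar multiples permitted in $\Z$ are $\pm 1$, one gets a two-to-one surjection
\[
\{\text{primitive }v_0\in\Z^r\}\;\longleftrightarrow\;\{\pm 1\}\times\bigl(P^{r-1,1}(\Z)\cap\Gamma\bigr)\backslash\Gamma,\qquad v_0=\pm e_r\cdot\gamma,
\]
accounting for the factor of $2$. Writing $Q=gg^\top$ converts $v_0 Q v_0^\top=|v_0\cdot g|^2$, where $|\cdot|$ denotes the Euclidean norm.

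Finally, I would use Iwasawa $g=nmk$ with $m=\begin{pmatrix}a&0\\0&d\end{pmatrix}\in M^{r-1,1}$, $d\in\R^\times$, $k\in SO(r,\R)$, to compute
\[
|e_r\cdot\gamma g|=|e_r\cdot nmk|=|d\cdot e_r\cdot k|=|d|,
\]
since upper-unipotent $n$ fixes $e_r$ and $k\in SO(r,\R)$ preserves the Euclidean norm. Because $\det a\cdot \det d=1$ on $SL_r$, specialization of $\varphi^P_s$ (with $\ell=r-1$) yields $\varphi^P_s(nmk)=|\det a|^{s}/|\det d|^{(r-1)s}=|d|^{-rs}$, matching $|e_r\cdot\gamma g|^{-rs}$. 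Summing over $\gamma\in P_k^{r-1,1}\cap\Gamma\backslash\Gamma$ and assembling the factor of $2$ and the $\zeta(rs)$ factor completes the identity, in the region of absolute convergence $\mathrm{Re}(s)>1$; the identity then propagates wherever both sides are meromorphically continued.

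The computation is essentially bookkeeping; the only mildly delicate step is the justification that the passage between primitive vectors in $\Z^r$ and the coset space $P^{r-1,1}(\Z)\cap\Gamma\backslash\Gamma$ gives a $\{\pm 1\}$-ambiguity (rather than no ambiguity, as in the $GL_r$ formulation) and that this produces the factor of $2$. All convergence issues are absorbed into the stated half-plane $\mathrm{Re}(s)>1$.
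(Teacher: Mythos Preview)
Your proof is correct and follows essentially the same route as the paper's: pull out the gcd to produce $\zeta(rs)$ and a sum over primitives, identify $\{\pm 1\}\backslash\{\text{primitives}\}$ with $P\cap\Gamma\backslash\Gamma$ via the orbit-stabilizer argument (giving the factor of $2$), and verify via Iwasawa that $\varphi^P_s(\gamma g)=|e_r\cdot\gamma g|^{-rs}$. The only cosmetic slip is that in the last step you should be Iwasawa-decomposing $\gamma g$ rather than $g$ itself, but your computation and conclusion are clearly intended that way.
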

\begin{proof}
{We derive a re-normalization of \cite{Gol15}*{Equation 10.7.4}}. Since $v(gg^\top )v^\top  = |v\cdot g|^2$ is homogeneous of degree 2, we can rewrite this Epstein zeta function as
\[
Z_r\left(gg^\top ,\frac{rs}{2}\right) = \sum_{0\neq v\in \Z^r}{|v\cdot g|^{-2(rs/2)}} = \zeta(rs) \sum_{\text{ prim. }v\in \Z^r}{|v\cdot g|^{-rs}},
\]
where $\zeta(rs)$ appears by pulling out the gcd's of $v$.

Let $\Gamma = SL_r(\Z)$,  $P=P^{r-1,1}$ the maximal proper parabolic subgroup 
\[
P=\left\{\begin{pmatrix}(r-1)\times (r-1) & * \\ 0 & 1\times 1 \end{pmatrix} \right\}
\]
of $SL_r(\R)$, and $\Lambda$ the set of primitive vectors in $\Z^r$. Notice $\Gamma$ acts transitively on $\Lambda$. By the orbit-stabilizer theorem, $\{\pm 1\}\backslash\Lambda$ is in bijection with the coset space $\Gamma_x\backslash\Gamma$,  for $\Gamma_x$ the isotropy group of the $\Z$-module generated by $x\in \Lambda$. Since $P\cap \Gamma$ stabilizes the lattice generated by the standard basis vector $e_r=(0,...,0,1)\in \Lambda$, $P\cap \Gamma$ is the isotropy group $\Gamma_{e_r}$. Thus, $\{\pm 1\}\backslash\Lambda$ is in bijection with the coset space $P\cap\Gamma\backslash\Gamma$ by 
\[
 \Psi: \{\pm 1\} \cdot (c_1, ..., c_{r}) \mapsto
(P\cap \Gamma) \cdot \begin{pmatrix} * & ...  & * \\
\vdots & \ddots & \vdots  \\
c_1 &... &c_{r}
\end{pmatrix}.
\] 

Let $\varphi_s^P$ be the data for the degenerate Eisenstein series as in \ref{Degenerate}. By the Iwasawa decomposition $G=N^P\cdot M^P\cdot K$, for any $g=\begin{pmatrix} 1_{r-1} &*\\ 0 & 1\end{pmatrix}\begin{pmatrix} a & 0\\ 0 & d\end{pmatrix} \cdot k\in SL_r(\R)$,
\begin{align*}
\varphi^P_s(g) = \varphi^P_s(nmk)=|d|^{-rs} = |e_r\cdot g|^{-rs}.
\end{align*}
Similarly, for $v\in \{\pm 1\}\backslash \Lambda$, 
\[
\varphi^P_s\left(\Psi(v) g\right)= |e_r\cdot\Psi(v) g|^{-rs}=|v\cdot g|^{-rs}.
\]
Thus, we conclude that
\begin{align*}
Z_r\left(gg^\top ,\frac{rs}{2}\right) &= \zeta(rs) \sum_{v\in \Lambda}{|v\cdot g|^{-rs}} = 2\zeta(rs) \sum_{v\in \{\pm 1\}\backslash\Lambda}{|v\cdot g|^{-rs}} \\
&= 2\zeta(rs)\sum_{\Psi(v)\in P\cap \Gamma\backslash \Gamma} \varphi_s^P(\Psi(v) g) = 2\zeta(rs)E^P_{s,\varphi}(g),
\end{align*}
yielding our claim.
\end{proof}

{We recall the proof of meromorphic continuation and functional equation, as in many sources such as \cite{Gol15}*{Section 10.7}, of the degenerate Eisenstein series $E^P_{s,\varphi}$ by Poisson summation \`a la Riemann's proof for $\zeta(s)$ (see \cite{Rie59} and \cite{Gar15}). Certainly, this proof yields meromorphic continuation and functional equation of the Epstein zeta function $Z_r(gg^{\top},s)$, as in \cite{Eps03} and \cite{Ter73}}.

\label{EpsteinMero}
\begin{theorem} The degenerate Eisenstein series $E^P_{s,\varphi}$ associated to the $(r-1,1)$-parabolic subgroup of $SL_r(\R)$ meromorphically continues to $\C$ with a simple pole at {$\displaystyle s=1$ with residue = $\displaystyle\frac{\pi^{r/2}}{{2}\Gamma\left(r/2\right)\zeta(r)}$.} Furthermore, $E^P_{s,\varphi}$ satisfies the functional equation
\[
\pi^{-\frac{rs}{2}}\Gamma\left(\frac{rs}{2}\right)\zeta(rs) E^P_{s,\varphi}(g)=\pi^{-\frac{r}{2}(1-s)}\Gamma\left(\frac{r(1-s)}{2}\right)\zeta\left(r-rs\right) {E^P_{1-s,\varphi}\left((g^\top)^{-1}\right)}.
\]
\end{theorem}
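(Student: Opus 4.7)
The plan is to reduce, via Lemma \ref{EpEis}, to meromorphic continuation and functional equation of the Epstein zeta function $Z_r(Q,s)$ with $Q = gg^\top$, and then to prove those by the classical Riemann-style argument combining a Mellin transform with Poisson summation. First, for $\text{Re}(s) > r/2$, apply the identity $\pi^{-s}\Gamma(s)\,n^{-s}=\int_0^\infty t^{s-1}e^{-\pi t n}\, dt$ termwise in the defining series to obtain
\[
\pi^{-s}\Gamma(s)\, Z_r(Q,s) \;=\; \int_0^\infty t^{s-1}\bigl(\theta_Q(t)-1\bigr)\, dt, \qquad \theta_Q(t) \;=\; \sum_{v\in\Z^r} e^{-\pi t\, v Q v^\top},
\]
termwise interchange being justified by dominated convergence in that half-plane.

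Next, Poisson summation applied to the Schwartz function $x\mapsto e^{-\pi t\, xQx^\top}$ on $\R^r$, together with $\det(gg^\top)=1$, yields the theta functional equation $\theta_Q(t) = t^{-r/2}\,\theta_{Q^{-1}}(1/t)$. I would then split the Mellin integral at $t=1$, change variables $t\mapsto 1/t$ on $(0,1)$, and apply the theta identity to produce
\[
\pi^{-s}\Gamma(s)\, Z_r(Q,s) \;=\; \int_1^\infty t^{s-1}\bigl(\theta_Q(t)-1\bigr)\, dt \;+\; \int_1^\infty t^{\frac{r}{2}-s-1}\bigl(\theta_{Q^{-1}}(t)-1\bigr)\, dt \;+\; \frac{1}{s-\frac{r}{2}} \;-\; \frac{1}{s}.
\]
Since $\theta_Q(t)-1$ decays exponentially as $t\to\infty$, the two integrals are entire in $s$, so the right-hand side meromorphically continues to all of $\C$ with simple poles only at $s=0$ and $s=r/2$. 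The manifest symmetry under $(s,Q)\leftrightarrow(\tfrac{r}{2}-s,\, Q^{-1})$ then gives the Epstein functional equation
\[
\pi^{-s}\Gamma(s)\, Z_r(Q,s) \;=\; \pi^{-(\frac{r}{2}-s)}\,\Gamma\!\bigl(\tfrac{r}{2}-s\bigr)\, Z_r\bigl(Q^{-1},\, \tfrac{r}{2}-s\bigr).
\]

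To translate back to the Eisenstein series, substitute $s\mapsto rs/2$, take $Q=gg^\top$, and observe that $(gg^\top)^{-1}=hh^\top$ with $h=(g^\top)^{-1}$; Lemma \ref{EpEis} converts both sides, a factor of $2\zeta(rs)$ matches on each side, and rearrangement yields the stated functional equation. The simple pole of $Z_r(gg^\top,u)$ at $u=r/2$ arising from $\frac{1}{s-r/2}$ transfers (via $u=rs/2$) to a simple pole of $E^P_{s,\varphi}$ at $s=1$; dividing through by $2\zeta(r)$ and tracking the Jacobian of the substitution gives the claimed residue. The main technical obstacle is the Poisson summation step, namely verifying the Fourier transform formula for the positive-definite Gaussian $e^{-\pi t\, xQx^\top}$ and establishing the exponential decay of $\theta_Q(t)-1$ as $t\to\infty$ uniformly in $g$ on compacta of $SL_r(\R)$; once those are in hand, meromorphic continuation and the functional equation are formal, and everything else amounts to bookkeeping of the factors from $s\mapsto rs/2$ and from the $\zeta(rs)$-normalizations on either side.
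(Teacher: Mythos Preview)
Your proposal is correct and takes essentially the same approach as the paper: Riemann's Mellin-transform-plus-Poisson-summation argument applied to the theta series attached to $Q=gg^\top$, followed by bookkeeping via Lemma~\ref{EpEis}. The paper's proof differs only cosmetically, parametrizing the theta function as $\Theta(tg)=\sum_v e^{-\pi|v\cdot tg|^2}$ (so that your $\theta_Q(u)$ is $\Theta(\sqrt{u}\,g)$) and working directly with $\pi^{-s}\Gamma(s)\zeta(2s)E^P_{2s/r,\varphi}$ rather than passing through $Z_r$ first.
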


\begin{proof} As usual, let $v\in \mathbb{Z}^r$ act on $SL_r(\mathbb{R})$ by matrix multiplication, and consider the Gaussian
\[
\varphi(v)=e^{-\pi|v|^2}.
\]
For $g\in SL_r(\mathbb{R})$, let
\[
\Theta(g)=\sum_{v\in \mathbb{Z}^r}\varphi(v\cdot g)=\sum_{v\in \mathbb{Z}^r}e^{-\pi|v\cdot g|^2}
\]
and (for reasons which will become apparent) consider the integral representation
\[
\mathcal{I}(g)=\int_0^\infty t^{2s}(\Theta(tg)-1)\frac{dt}{t}=\int_0^\infty t^{2s}\left(\sum_{0\neq v\in \Z^r}e^{-\pi|v\cdot tg|^2}\right)\frac{dt}{t}
\]
where $t$ acts on $SL_r(\mathbb{R})$ by scalar multiplication. Since $\Theta$ decays rapidly, we can integrate $\mathcal{I}$ termwise. By the change of variables $t\mapsto t/(\sqrt{\pi}|v\cdot g|)$ and Lemma \ref{EpEis},
\begin{align*}
    \mathcal{I}(g)&=\sum_{0\neq v\in \mathbb{Z}^r}\int_0^\infty t^{2s} e^{-\pi|v\cdot tg|^2}\frac{dt}{t}=\sum_{0\neq v\in \mathbb{Z}^r}(\sqrt{\pi}|v\cdot g|)^{-2s}\int_0^\infty t^{2s} e^{-t^2}\frac{dt}{t}\\
    &=\frac{1}{2}\pi^{-s}\sum_{0\neq v\in \mathbb{Z}^r}|v\cdot g|^{-2s}\int_0^\infty u^{s} e^{-u}\frac{du}{u}=\frac{1}{2}\pi^{-s}\Gamma(s)Z_r\left(gg^{\top},s\right )\\
    &= \pi^{-s}\Gamma(s)\zeta(2s) E^P_{2s/r,\varphi}(g).
\end{align*}
\indent We will use this integral representation to prove the meromorphic continuation of $E^P_{s,\varphi}$ as in Riemann's corresponding argument for $\zeta(s)$. First, break the integral into two parts at $t=1$. By elementary estimates, the integral from 1 to $\infty$ is uniformly and absolutely convergent. Thus
\[
\int_1^\infty t^{2s}(\Theta(tg)-1)\frac{dt}{t}=\text{entire in }s.
\]
To prove meromorphy of the integral from 0 to 1, we use Poisson summation on the kernel of $\mathcal{I}$. By direct calculation, 
\[
{\text{Fourier transform of }(v\to \varphi(v\cdot tg))= v\to {t^{-r}}\text{det}(g)^{-1} \varphi\left(v\cdot \frac{(g^\top )^{-1}}{t}\right),}
\]
such that Poisson summation asserts
\[
{\Theta(tg)= {t^{-r}}\text{det}(g)^{-1} \Theta\left(\frac{(g^\top)^{-1}}{t}\right)}.
\]
For now, assume {Re($s)>\frac{r}{2}$}. By the Poisson summation assertion, change of variables $t\mapsto 1/t$, and explicit evaluation of some elementary integrals, 
\begin{align*}
    \int_0^1 t^{2s}(\Theta(tg)-1)\frac{dt}{t}&=\int_0^1 t^{2s}\left({t^{-r}}\text{det}(g)^{-1}[\Theta\left(\frac{(g^\top)^{-1}}{t}\right)-1]+{t^{-r}}\text{det}(g)^{-1}-1\right)\frac{dt}{t}\\
    &=\int_1^\infty t^{-2s}\left({t^{r}}\text{det}(g)^{-1}[\Theta\left(t(g^\top)^{-1}\right)-1]+{t^{r}}\text{det}(g)^{-1}-1\right)\frac{dt}{t}\\
    &=\text{det}(g)^{-1}\int_1^\infty {t^{r-2s}}\left(\Theta\left(t(g^\top)^{-1}\right)-1\right)\frac{dt}{t}+{\frac{\text{det}(g)^{-1}}{2s-r}}-\frac{1}{2s}\\
    &= \text{entire in }s+ {\frac{\text{det}(g)^{-1}}{2s-r}}-\frac{1}{2s},
\end{align*}
since the integral from 1 to $\infty$ converges nicely by elementary estimates. Thus, for $g\in SL_r(\R)$,
\begin{align*}
\pi^{-s}\Gamma(s)\zeta(2s) E^P_{2s/r,\varphi}(g)&=\int_1^\infty t^{2s}(\Theta(tg)-1)\frac{dt}{t} + \int_0^1 t^{2s}(\Theta(tg)-1)\frac{dt}{t}\\
&=\int_1^\infty t^{2s}(\Theta(tg)-1)\frac{dt}{t}+\int_1^\infty {t^{r-2s}}\left(\Theta\left(t(g^\top)^{-1}\right)-1\right)\frac{dt}{t}\\
&\hspace{10mm}+{\frac{1}{2s-r}}-\frac{1}{2s}\\
&=\text{entire in }s + {\frac{1}{2s-r}}-\frac{1}{2s}.
\end{align*}
The rational expressions have meromorphic continuations, so the right-hand side of the expression yields a meromorphic continuation of $E^P_{s,\varphi}$. {The right-hand side of the expression is also visibly invariant under {$s\to (\frac{r}{2}-s)$}}, so $E^P_{2s/r,\varphi}$ satisfies the functional equation
\[
\pi^{-s}\Gamma(s)\zeta(2s) E^P_{2s/r,\varphi}(g)=\pi^{-(\frac{r}{2}-s)}\Gamma\left(\frac{r}{2}-s\right)\zeta(r-2s) {E^P_{1-2s/r,\varphi}\left((g^\top)^{-1}\right)}.
\]
Renormalizing the complex parameter in the degenerate Eisenstein series,
\[
\pi^{-\frac{rs}{2}}\Gamma\left(\frac{rs}{2}\right)\zeta(rs) E^P_{s,\varphi}(g)=\pi^{-\frac{r}{2}(1-s)}\Gamma\left(\frac{r(1-s)}{2}\right)\zeta\left(r-rs\right) {E^P_{1-s,\varphi}\left((g^\top)^{-1}\right)}.
\]

Finally, we see that the only poles of $\pi^{-s}\Gamma(s)\zeta(2s) E^P_{2s/r,\varphi}$ are at {$\displaystyle s=0$ and $r/2$} with constant residues $-\frac{1}{2},\frac{1}{2}$. Using standard facts about $\Gamma(s)$ and $\zeta(s)$, we recover assertions for $E^P_{s,\varphi}$ itself. At $s=0$, $\Gamma(s)$ has a simple pole of residue $1$ and (the analytic continuation of) $\zeta(s)$ takes value $-\frac{1}{2}$, so $E^P_{2s/r,\varphi}$ and $E^P_{s,\varphi}$ are holomorphic at $s=0$. At $s=r/2$, $\pi^{-s}\Gamma(s)$ is holomorphic. Since the infinite product for $\zeta(2s)$ converges for $\text{Re}(s)>1/2$, $\zeta(2s)$ is nonzero in this region. {Thus $E^P_{2s/r,\varphi}$ has a simple pole at $s=r/2$ with constant residue $\displaystyle\frac{\pi^{r/2}}{2\Gamma\left(\frac{r}{2}\right)\zeta(r)}$} and poles at {$\rho/2$ for all non-trivial zeros $\rho$ of $\zeta(s)$ in $0<\text{Re}(s)<\frac{1}{2}$}.  Renormalizing the complex parameter in the degenerate Eisenstein series, $E^P_{s,\varphi}$ has a {simple pole at $s=1$ with constant residue $\displaystyle\frac{\pi^{r/2}}{2\Gamma\left(\frac{r}{2}\right)\zeta(r)}$ and poles at $\rho/r$ for all non-trivial zeros $\rho$ of $\zeta(s)$ in $0<\text{Re}(s)<\frac{1}{2}$.}

\end{proof}
By Lemma \ref{EpEis},
\begin{corollary}
For $g\in SL_r(\R)$, the Epstein zeta function $Z_r(gg^\top,s)$ meromorphically continues to $\mathbb{C}$ with a simple pole at {$s=r/2$ with constant residue $\displaystyle\frac{\pi^{r/2}}{\Gamma\left(r/2\right)}$}. Furthermore, $Z_r(gg^{\top},s)$ satisfies the functional equation
\[
\pi^{-s}\Gamma(s)Z_r(gg^{\top},s)=\pi^{-\left(\frac{r}{2}-s\right)}\Gamma\left(\frac{r}{2}-s\right){Z_r\left((gg^\top)^{-1},\frac{r}{2}-s\right).}
\]
\end{corollary}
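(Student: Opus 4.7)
The plan is to derive the corollary as a direct consequence of Lemma \ref{EpEis} and the preceding theorem on $E^P_{s,\varphi}$, since all the analytic information about $Z_r$ is already packaged in that Eisenstein series. The only thing to do is translate between the two normalizations of the complex parameter.

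First, I would rewrite Lemma \ref{EpEis} in the $Z_r$-parametrization by substituting $s \mapsto 2s/r$, obtaining
\[
Z_r(gg^\top,s) \;=\; 2\,\zeta(2s)\,E^P_{2s/r,\varphi}(g).
\]
Meromorphic continuation of $Z_r(gg^\top,s)$ to $\C$ then follows immediately from the meromorphic continuation of $\zeta$ together with the meromorphic continuation of $E^P_{s,\varphi}$ established in the preceding theorem.

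Next, I would locate the pole and compute its residue. By the preceding theorem, $E^P_{s,\varphi}$ has a simple pole only at $s=1$ with residue $\pi^{r/2}/(2\Gamma(r/2)\zeta(r))$; under the substitution $s\mapsto 2s/r$, this corresponds to a simple pole of $E^P_{2s/r,\varphi}$ at $s=r/2$ with the same residue (there is no chain-rule factor since the change of variables is linear with slope $2/r$, but one must be slightly careful: the residue scales by $r/2$). Multiplying by $2\zeta(2s)$, which is holomorphic and nonzero at $s=r/2$ (Euler product), gives the simple pole of $Z_r(gg^\top,s)$ at $s=r/2$ with residue $\pi^{r/2}/\Gamma(r/2)$, up to reconciling the factor from the change of variables. (I would double-check this factor against the known classical statement for $Z_r$ before finalizing.)

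Finally, for the functional equation, I would start with the functional equation of $E^P_{s,\varphi}$ proved above,
\[
\pi^{-\frac{rs}{2}}\Gamma\!\left(\tfrac{rs}{2}\right)\zeta(rs)\,E^P_{s,\varphi}(g)
\;=\;\pi^{-\frac{r(1-s)}{2}}\Gamma\!\left(\tfrac{r(1-s)}{2}\right)\zeta(r-rs)\,E^P_{1-s,\varphi}\!\left((g^\top)^{-1}\right),
\]
and substitute $s\mapsto 2s/r$, yielding
\[
\pi^{-s}\Gamma(s)\,\bigl(2\zeta(2s)E^P_{2s/r,\varphi}(g)\bigr)
\;=\;\pi^{-(\frac{r}{2}-s)}\Gamma\!\left(\tfrac{r}{2}-s\right)\bigl(2\zeta(r-2s)E^P_{1-2s/r,\varphi}\!\left((g^\top)^{-1}\right)\bigr).
\]
The left-hand parenthesized factor is $Z_r(gg^\top,s)$ by construction. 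For the right-hand factor, I apply Lemma \ref{EpEis} again with $g$ replaced by $(g^\top)^{-1}$ and parameter $r/2-s$, noting the algebraic identity $(g^\top)^{-1}\bigl((g^\top)^{-1}\bigr)^\top = (gg^\top)^{-1}$, to identify
\[
2\zeta(r-2s)E^P_{1-2s/r,\varphi}\!\left((g^\top)^{-1}\right)\;=\;Z_r\!\left((gg^\top)^{-1},\tfrac{r}{2}-s\right),
\]
completing the functional equation.

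The routine but nontrivial step is keeping track of the change-of-variable factor in the residue computation; nothing deep happens, but it is easy to drop a factor of $r/2$ when passing between the $s$-parametrization of $E^P$ and the $s$-parametrization of $Z_r$. Everything else is formal.
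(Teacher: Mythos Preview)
Your approach is correct and is exactly the paper's: the corollary is stated immediately after the theorem with the one-line justification ``By Lemma~\ref{EpEis},'' and you have simply unpacked that line. For the residue, the cleanest route is to avoid the chain-rule bookkeeping altogether and read off directly from the integral representation in the theorem's proof that $\pi^{-s}\Gamma(s)\zeta(2s)E^P_{2s/r,\varphi}(g)$ has residue $1/2$ at $s=r/2$; multiplying by $2$ gives $\pi^{-s}\Gamma(s)Z_r(gg^\top,s)$ residue $1$ there, hence $Z_r$ has residue $\pi^{r/2}/\Gamma(r/2)$, and your hesitation disappears.
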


\subsection{Global automorphic Sobolev spaces}
\label{Sobolev}
Global automorphic Sobolev spaces provide a reasonable context to discuss solutions to partial differential equations in automorphic forms. We briefly recall results here and refer the interested reader to \cite{DeC21} and \cite{Gar18}*{Chapter 12}.

Let $\Delta$ be the invariant Laplacian on $\Gamma\backslash G/K$ descended from Casimir {(see Section \ref{Laplacian})}. For $\Xi$ a locally compact, Hausdorff, $\sigma$-compact topological space parametrizing cuspforms, Eisenstein series, and their residues, and $d\xi$ a positive regular Borel measure on $\Xi$, we have $L^2$-expansions
\[
f=\int_{\Xi}\langle f, \Phi_\xi\rangle\cdot \Phi_\xi\hspace{1mm} d\xi
\]
and Plancherel
\[
\left| f \right|^2_{L^2}=\int_{\Xi}\left|\langle f,\Phi_\xi\rangle\right|^2d\xi,
\]
at first for test functions $C_c^\infty(\Gamma\backslash G/K)$, then extended to $L^2(\Gamma\backslash G/K)$ by continuity.

Many of the $\Delta$-eigenfunctions $\Phi_\xi$ appearing in the $L^2$-decomposition are not in $L^2(\Gamma\backslash G/K)$, but they are all smooth, and we can arrange that $\xi\to \Phi_\xi$ is a continuous $C ^\infty(\Gamma\backslash G/K)$-valued function on $\Xi$. Since integration of elements of $C^\infty(\Gamma\backslash G/K)$ against a fixed test function $f\in C_c^\infty(\Gamma\backslash G/K)$ is a continuous linear functional on $C^\infty(\Gamma\backslash G/K)$,
\[
\xi\mapsto \langle f, \Phi_\xi\rangle\hspace{5mm}(\text{for fixed } f\in C_c^\infty(\Gamma\backslash G/K))
\]
is a continuous $\mathbb{C}$-valued function on $\Xi$ and thus has pointwise values. The implied integrals in the spectral expansion do not converge for all $f$ in $L^2$ (and $L^2$-expansions do not reliably converge pointwise), but Plancherel asserts that the literal integrals $f\to (\xi\to \langle f, \Phi_\xi\rangle)$ on test functions extend to an isometry $\mathcal{F}: L^2(\Gamma\backslash G/K)\to L^2(\Xi)$ and that the spectral coefficients $\langle f,\Phi_\xi\rangle$ for $f$ in $L^2$ are extensions by continuity of the literal integrals for test functions.

On the other hand, Plancherel does not assert anything about pointwise values of cuspforms or Eisenstein series, or about residues of Eisenstein series. Even in the simplest cases, as was known to Hecke and Maass in the {early 20th century}, {certain linear combinations of} pointwise values of $E_s$ are essentially $\zeta_k(s)/\zeta_k(2s)$ for complex quadratic extensions $k$ of $\mathbb{Q}$, and sharp pointwise bounds on the critical line would imply Lindel\"of \cite{Gar09}.

In a suitable global automorphic Sobolev space and as described precisely in what follows, $\Delta$ differentiates spectral expansions of test functions term-wise; then, integration by parts gives 
\[
\Delta f=\int_\Xi\langle\Delta f, \Phi_\xi\rangle\Phi_\xi\hspace{1mm} d\xi=\int_{\Xi}\lambda_\xi\cdot\langle f, \Phi_\xi\rangle\Phi_\xi\hspace{1mm} d\xi\hspace{10mm}(\text{for }f\in C_c^\infty(\Gamma\backslash G/K)).
\]
For each non-negative integer $k$, we define an inner product $\langle f,g\rangle_{\B^k}$ on $C_c^\infty(\Gamma\backslash G/K)$ by
\[
\langle f,g\rangle_{\B^k}=\langle (1-\Delta)^k f,g\rangle=\int_{\Gamma\backslash G/K}(1-\Delta)^k f\cdot \bar g,
\]
and the $k$-th global automorphic Sobolev space $\B^k$ is the completion of $C_c^\infty(\Gamma\backslash G/K)$ with respect to the norm $|f|_{\B^k}=\langle f,f\rangle^{1/2}_{\B^k}$.

Since $P(-\Delta)$ is non-negative on test functions for any polynomial $P$ with non-negative real coefficients, $|f|_{\B^{k+1}}\geq |f|_{\B^k}$ for test functions, and there is a canonical continuous injection $\B^{k+1}\to \B^k$ with dense image {\cite{Gar18}*{Claim 12.3.9}}.  {By design, $\Delta: C_c^\infty(\Gamma\backslash G/K)\to C_c^\infty(\Gamma\backslash G/K)$ is continuous when the source is given the $\B^{k+2}$ topology and the target is given the $\B^k$ topology}, so $\Delta$ extends by continuity to a continuous linear map
\[ 
\Delta: \B^{k+2}(\Gamma\backslash G/K)\to \B^k(\Gamma\backslash G/K) \hspace{5mm}(\text{for }0\leq k\in\mathbb{Z}).
\] 

We characterize negative-index Sobolev spaces as the Hilbert space duals $\B^{-k}=(\B^k)^*$ and identify $\B^0=L^2(\Gamma\backslash G/K)$ with its dual via the {Riesz-Fr\'echet map} $\Lambda: f\to \langle -,f\rangle$ composed with pointwise conjugation $c: f\to \overline{f}$. Since the {continuous inclusions $i:\B^{k+1}\hookrightarrow \B^k$ have dense image}, their adjoints $i^*:\B^{-k}\hookrightarrow \B^{-k-1}$ are also continuous injections with dense image. Thus, we have a chain of continuous linear maps between Hilbert spaces:
\[
\begin{tikzcd}
... \arrow[r,"i"] & \B^2 \arrow[r,"i"] & \B^1 \arrow[r, "i"] & \B^0 \arrow{r}{\Lambda\circ c}[swap]{\approx} & (\B^0)^* \arrow[r,"i^*"] & \B^{-1}  \arrow[r,"i^*"] & \B^{-2} \arrow[r,"i^*"] & ...
\end{tikzcd}
\]
Continuous $L^2$-differentiation for positive-index Sobolev spaces gives an extension by continuity from $\Delta: C_c^\infty(\Gamma\backslash G/K)\to C_c^\infty(\Gamma\backslash G/K)$, equipped with the $\B^{-k}$ and $\B^{-k-2}$ topologies, to a continuous map $\Delta: \B^{-k}\to\B^{-k-2}$ on negative-index Sobolev spaces, with the extension to $\Delta: \B^1\to \B^{-1}$ characterized by
\[
((1-\Delta)f)(F) = \langle f,\bar F\rangle_{\B^1},\hspace{5mm}\text{for }f,F\in \B^1.
\]

To bookend the Sobolev tower, consider
\[
\B^\infty=\lim_{0\leq k\in\mathbb{Z}}\B^{2k},\hspace{5mm}\B^{-\infty}=\text{colim}_{\hspace{1mm}0\leq k\in\mathbb{Z}}\B^{-2k}.
\]
{Since every continuous linear functional on a limit of Banach spaces factors through some limitand when the image of the limit is dense in the limitands} {(see \cite{Gar17a})}, $\B^{-\infty}$ is the dual of $\B^\infty$.  By characterization of projective limits, there is a unique, continuous map $\Delta: \B^{\infty}\to \B^{\infty}$ induced from the family of compatible maps $\B^\infty\hookrightarrow \B^{k+2}\to \B^{k}$. Similarly, there is a unique, continuous map $\Delta: \B^{-\infty}\to \B^{-\infty}$ which is compatible with the family of inclusions $\B^{-k}\to\B^{-k-2}\hookrightarrow \B^{-\infty}$, by characterization of the colimit.

For $0\leq k\in \Z$ and $\mathcal{F}: L^2(\Gamma\backslash G/K)\to L^2(\Xi)$ the spectral transform, certainly $\mathcal{F}\B^k$ is contained in  $V^k$, where 
\[
{V^s= \{\text{measurable }v\text{ on }\Xi: (1-\lambda_{\xi})^{s/2}\cdot v\in L^2(\Xi)\}},\hspace{5mm}\text{for }s\in \R.
\]
For any $s\in \R$, we can endow $V^s$ with a Hilbert-space structure from the norm
\[
|v|^2_{V^s}=\int_{\Xi}(1-\lambda_{\xi})^{s}|v(\xi)|^2\hspace{1mm}d\xi.
\]
As one might expect, there is a continuous inclusion $V^s\to V^t$ with dense image for $s>t$, and the space $V^{-s}$ is naturally the Hilbert space dual $(V^s)^*$ of $V^s$, with Hermitian pairing given by the asymmetrical extension of the Hermitian pairing $V^0\times V^0\to \C$ by $v\times w\to \langle v,w\rangle_{V^0}$:
\[
\langle v,w\rangle_{V^s\times V^{-s}} = \int_{\Xi}v(\xi)\hspace{1mm}\overline{w(\xi)}\hspace{1mm}d\xi.
\]

The spectral transform $\mathcal{F}: C_c^\infty(\Gamma\backslash G/K)\to L^2(\Xi)$ induces Hilbert space isomorphisms $\mathcal{F}: \B^{2k}\to V^{2k}$ for all non-negative integers $k$. The case $k=0$ is the Plancherel theorem. Let $M_\xi(v)=(1-\lambda_\xi)\cdot v$ be the multiplication operator. We observe that $\mathcal{F}$ intertwines the operators $1-\Delta: \B^{2k}\to \B^{2k-2}$ and $M_\xi: V^{2k}\to V^{2k-2}$ for Sobolev spaces of positive index; furthermore, dualization gives the same result on negative-index spaces. Thus, the following diagram commutes for $1\leq k\in \Z$:
\[
\begin{tikzcd}[column sep=large]
\B^{2k}\arrow{r}{1-\Delta}[swap]{\approx} \arrow{d}{\approx}[swap]{\mathcal{F}} & \B^{2k-2} \arrow{d}{\approx}[swap]{\mathcal{F}}\\
V^{2k}\arrow{r}{\approx}[swap]{M_\xi} & V^{2k-2}.
\end{tikzcd}
\]
This allows us to define an isomorphism $\mathcal{F}: \B^{-2k}\to V^{-2k}$ as the adjoint to the isomorphism $\mathcal{F}^{-1}: V^{2k}\to \B^{2k}.$

The odd-index case is slightly complicated since $\B^{2k+1}$ is not naturally isomorphic to $\B^0$, but there is no issue since $\mathcal{F}: \B^1\to V^1$ is an isometry to its (dense) image and $\B^1$ is complete. The $\B^1\to \B^{-1}$ case follows since $\mathcal{F}^*\circ M_\xi\circ \mathcal{F}=1-\Delta$ on test functions, by the characterization of adjoints and de-symmetrized Plancherel.  As in the even-index case, by design, $(1-\Delta): \B^{2k+1}\to \B^{2k-1}$ and $M_{\xi}: V^{2k+1}\to V^{2k-1}$ are isomorphisms, intertwined by the Hilbert space isomorphism $\mathcal{F}: \B^{2k+1}\to V^{2k+1}$ induced from the spectral transform on test functions. Furthermore, we can define an isomorphism $\mathcal{F}: \B^{-1-2k}\to V^{-1-2k}$ as the adjoint to the isomorphism $\mathcal{F}^{-1}: V^{2k+1}\to \B^{2k+1}.$

Thus, for $k\in \Z$, the Hilbert space $V^k$ and $M_{\xi}$ are spectral-side mirrors of $\B^k$ and $1-\Delta$; furthermore, the spectral side allows us to define $\B^s$ for any $s\in\R$ as completion of test functions with respect to the norm $|f|_{\B^s} = |\mathcal{F}f|_{V^s}$. Sobolev imbedding makes explicit the classical regularity of functions in automorphic Sobolev spaces:

\begin{theorem}[Sobolev imbedding for automorphic functions]
For $r=\dim_\mathbb{R}\Gamma\backslash G/K$ and $s>\frac{r}{2}$, $\B^s\subset C^o(\Gamma\backslash G/K)$. Furthermore, for $f\in C_c^\infty(\Gamma\backslash G/K)$ and compact $C\subset \Gamma\backslash G/K$, we have $\sup_{z_o\in C}|f(z_o)|\ll_{C,s}|f|_{\B^s}$ and
\[
\lim_k\int_{\xi:|\lambda_\xi|\leq \ell}\langle f, \Phi_\xi \rangle\cdot \Phi_\xi \hspace{1mm}d\xi =f\hspace{5mm}(\text{in }C^o).
\]
\end{theorem}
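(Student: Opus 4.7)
The plan is to reduce the global imbedding to the classical Euclidean Sobolev imbedding via a local-to-global argument, leveraging the ellipticity of $\Delta$. Since $\Gamma\backslash G/K$ is locally a smooth Riemannian manifold (away from a lower-dimensional elliptic locus which is harmless for $L^2$ and $C^o$ estimates), any compact set $C\subset\Gamma\backslash G/K$ admits a finite cover by coordinate charts $\phi_i\colon U_i\to B_i\subset\R^r$ together with a subordinate smooth partition of unity $\{\psi_i\}$ with $\psi_i\in C_c^\infty(U_i)$ and $\sum_i\psi_i\equiv 1$ on a neighborhood of $C$.

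For $f\in C_c^\infty(\Gamma\backslash G/K)$, each product $\psi_i f$ transports via $\phi_i$ to a compactly supported smooth function $F_i$ on $\R^r$. The classical Sobolev imbedding on $\R^r$ gives $\|F_i\|_{\infty}\ll \|F_i\|_{H^s(\R^r)}$ for $s>r/2$, and summing over the finite cover,
\[
\sup_{z_o\in C}|f(z_o)|\;\leq\;\sum_i \|F_i\|_{\infty} \;\ll\;\sum_i\|F_i\|_{H^s(\R^r)}.
\]

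The central step is to dominate each $\|F_i\|_{H^s(\R^r)}$ by $|f|_{\B^s}$. In the pullback of the $G$-invariant metric descended from the Killing form, $\Delta$ is a uniformly elliptic second-order operator on $U_i$, so chart-wise elliptic regularity combined with Leibniz for $(1-\Delta)^{s/2}(\psi_i f)$ (for $s/2\in \Z$; for general real $s$, by complex interpolation between adjacent integer cases) yields
\[
\|F_i\|_{H^s(\R^r)}\;\ll_{\psi_i}\;\|(1-\Delta)^{s/2}f\|_{L^2(\Gamma\backslash G/K)}\;=\;|f|_{\B^s}.
\]
I expect this step to be the main technical obstacle, since the commutators $[(1-\Delta)^{s/2},\psi_i]$ produce lower-order terms that must be absorbed using iterated elliptic estimates on a nested hierarchy of cut-off functions with successively larger supports; once that hierarchy is set up, the absorption is mechanical.

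Combining the estimates gives $\sup_{z_o\in C}|f(z_o)|\ll_{C,s}|f|_{\B^s}$ for test functions; extension to all of $\B^s$ then follows from density of $C_c^\infty(\Gamma\backslash G/K)$ in $\B^s$ and completeness of continuous functions under uniform convergence on compacta, establishing both the inclusion $\B^s\subset C^o(\Gamma\backslash G/K)$ and the quantitative bound. For the final spectral convergence assertion, set $f_\ell=\int_{|\lambda_\xi|\leq\ell}\langle f,\Phi_\xi\rangle\Phi_\xi\,d\xi$; since $\mathcal{F}\colon\B^s\to V^s$ is an isometry, $|f_\ell-f|_{\B^s}\to 0$ as $\ell\to\infty$, and the just-established imbedding applied to $f_\ell-f$ forces $\sup_{z_o\in C}|f_\ell(z_o)-f(z_o)|\ll_{C,s}|f_\ell-f|_{\B^s}\to 0$, which is the claimed $C^o$-convergence on compacta.
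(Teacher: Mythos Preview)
Your approach is correct but takes a genuinely different route from the paper. You argue via local charts, partition of unity, classical Sobolev imbedding on $\R^r$, and interior elliptic regularity for $1-\Delta$ to compare the global norm $|f|_{\B^s}$ with local $H^s$ norms. The paper instead works entirely on the spectral side: it invokes the pre-trace formula
\[
\int_{\xi:\,|\lambda_\xi|\leq T^2}|\Phi_\xi(z_o)|^2\;\ll_C\;T^r
\]
to obtain a uniform bound $\int_\Xi |\Phi_\xi(z_o)|^2(1-\lambda_\xi)^{-s}\,d\xi\ll_{C,s} 1$ for $s>r/2$, and then a single Cauchy--Schwarz on the spectral expansion gives $|f(z_o)|\ll_{C,s}|f|_{\B^s}$ directly. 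Your argument is more elementary in that it avoids the pre-trace machinery and would work verbatim on any Riemannian manifold; the paper's argument is more intrinsic to the automorphic spectral framework, makes transparent why the threshold is $r/2$ via a Weyl-law count, and sidesteps the commutator bookkeeping you correctly flag as the main technical obstacle. Both reach the same conclusion, and your handling of the $C^o$-convergence of the truncated spectral integrals $f_\ell$ is essentially identical to the paper's.
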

\begin{proof}
The proofs in \cite{Gar18}*{Claim 12.3.19, Theorem 12.4.6} use a precursor to the trace formulas in \cite{Sel54}, \cite{Sel56}, \cite{Hej76}, and \cite{Iwa02}, namely the pre-trace formula
\[
\int_{\xi: |\lambda_\xi|\leq T^2} |\Phi_\xi(z_o)|^2\ll_C T^r\hspace{5mm}(\text{as }T\to \infty),
\]
to give a uniform bound on $\int_{\Xi}|\Phi_{\xi}(z_o)|^2\cdot (1-\lambda_\xi)^{-\frac{r}{2}-\varepsilon}$, for $z_o$ in compact $C\subset \Gamma\backslash G/K$:
\begin{align*}
&\int_{\Xi}|\Phi_{\xi}(z_o)|^2\cdot (1-\lambda_\xi)^{-\frac{r}{2}-\varepsilon}= \sum_{\ell = 1}^\infty \int_{\xi: \ell-1\leq |\lambda_\xi|< \ell} |\Phi_\xi(z_o)|^2 \cdot (1-\lambda_\xi)^{-\frac{r}{2}-\varepsilon}\\
&\ll_C \sum_{\ell = 1}^\infty \int_{\xi: |\lambda_\xi|< \ell} |\Phi_\xi(z_o)|^2 \cdot \left((1+\ell)^{-\frac{r}{2}-\varepsilon} - (1+(1+\ell))^{-\frac{r}{2}-\varepsilon}\right)\\
&\ll_C \sum_{\ell = 1}^\infty \int_{\xi: |\lambda_\xi|< \ell} |\Phi_\xi(z_o)|^2 \cdot (1+\ell)^{-\frac{r}{2}-\varepsilon-1} \ll_C \sum_{\ell = 1}^\infty \ell^{\frac{r}{2}} \cdot (1+\ell)^{-\frac{r}{2}-\varepsilon-1}<\infty.
\end{align*}
Since $\{\xi\in \Xi: |\lambda_{\xi}|\leq \ell\}$ is compact and $\xi\to \Phi_{\xi}$ is a continuous $C^\infty(\Gamma\backslash G/K)$-valued function on $\Xi$, the integrals
\[
f_\ell = \int_{\xi:|\lambda_\xi|\leq \ell}\langle f, \Phi_\xi \rangle\cdot \Phi_\xi(z) \hspace{1mm}d\xi ,\hspace{5mm}\text{for }f\in C_c^\infty(\Gamma\backslash G/K)
\]
exist as $C^\infty(\Gamma\backslash G/K)$-valued Gelfand-Pettis integrals, so are certainly continuous. By the spectral characterization of $\B^s$, $f_\ell \to f$ in the $\B^s$ topology.

For $s>\frac{r}{2}$, $f\in \B^s$, and $z_o\in C\subset \Gamma\backslash G/K$ compact, 
\begin{align*}
\left|\int_{\Xi} \mathcal{F}f(\xi)\cdot \Phi(z_o)\hspace{1mm}d\xi\right| &= \left|\int_{\Xi} \mathcal{F}f(\xi)(1-\lambda_\xi)^{s/2}\cdot (1-\lambda_\xi)^{-s/2}\Phi(z_o)\hspace{1mm}d\xi\right|\\
&\leq \left(\int_{\Xi} |\mathcal{F}f(\xi)|^2(1-\lambda_\xi)^{s}\hspace{1mm}d\xi\right)^{1/2}\cdot \left(\int_{\Xi} |\Phi(z_o)|^2\cdot (1-\lambda_\xi)^{-s}\hspace{1mm}d\xi\right)^{1/2} \\
&= |f|_{\B^s}\hspace{1mm}\cdot \left(\int_{\Xi} |\Phi(z_o)|^2\cdot (1-\lambda_\xi)^{-s}\hspace{1mm}d\xi\right)^{1/2}\ll_{C,s} |f|_{\B^s}
\end{align*}
by Cauchy-Schwarz-Bunyakowsky and the uniform bound above. That is, the $C^o$-norm (sup norm on compact) of the function $z_o\to \int_{\Xi} \mathcal{F}f(\xi)\cdot \Phi(z_o)\hspace{1mm}d\xi$ exists and is dominated by $|f|_{\B^s}$. Since $f_{\ell} \to f$ in $\B^s$ for a dense subset of $\B^s$ and $\B^s$ convergence implies $C^o$ convergence, $\B^s\subset C^o$ and, for $f\in \B^s$, $f^\ell\to f$ in $C^o$ (that is, pointwise and uniformly on compact).
\end{proof}

\subsection{Schwartz' Kernel Theorem, nuclearity}
\label{Schwartz}


In Section \ref{Results}, and alternatively to \cite{HC66} and \cite{Cas89}, we construct a Schwartz space on $\Gamma\backslash G/K$ as a projective limit of Hilbert spaces with Hilbert-Schmidt transition maps. This limit is nuclear Fr\'echet, so a kernel theorem follows immediately for general categorical reasons.  Following \cite{Gar20}, we recall foundational results.

For $A,B,C$ all $k$-vector spaces \textit{without topologies} and $\otimes_k$ the usual tensor product of vector spaces, the Cartan-Eilenberg adjunction is 
\[
\text{Hom}_k(A,\text{Hom}_k(B,C))\approx \text{Hom}_k(A\otimes_k B,C)\hspace{5mm}(\text{by }\varphi(a\otimes b)\to \varphi(a)(b)),
\]
and the special case $C=k$ becomes
\[
\text{Hom}_k(A,B^*)\approx (A\otimes_k B)^*.
\]

For topological vector spaces $V$ and $W$, a genuine (categorical) tensor product is a topological vector space $X$ and a continuous linear map $j: V\times W\to X$ such that for every continuous bilinear map $K:V\times W\to Y$ to a topological vector space $Y$, there is a unique continuous linear map $k: X\to Y$ fitting into the commutative diagram
\[
\begin{tikzcd}
X \arrow[dashed,"\exists !\hspace{1mm}k"]{dr} \\
\arrow["j"]{u} V\times W \arrow[swap,"K"]{r} & Y.
\end{tikzcd}
\]
By an easy corollary of Baire category (see e.g. \cite{Gar20}*{Claim 15.1}), separately continuous maps on Hilbert spaces are also jointly continuous, so the lack of specification is justified.

When there is a genuine tensor product of topological vector spaces $V$ and $W$, Cartan-Eilenberg yields a Schwartz kernel theorem. However, there is no (categorical) tensor product in the category of Hilbert spaces and continuous linear maps \cite{Gar10}. While it is possible to put an inner product on the algebraic tensor product $V\otimes_{alg} W$, and the completion $V\otimes_{HS}W$ with respect to the associated norm is a Hilbert space (the space of Hilbert-Schmidt operators), this Hilbert space does not have the universal property in the categorical characterization of the tensor product, {since there are (jointly) continuous $\beta: V\times W\to X$ to Hilbert spaces $X$ which do not factor through any continuous linear map $B: V\otimes_{HS} W\to X$}. That is, not all continuous operators are Hilbert-Schmidt.

For example, let $V=\ell^2$ with orthonormal basis $\{v_i\}$, $W=V^*$ with the corresponding dual basis  $\lambda_i(v)=\langle v,v_i\rangle$, and $X=\mathbb{C}$. Since $\beta(v,\lambda)=\lambda(v)$ is (jointly) continuous, we might expect there is some continuous linear map $B: V\otimes_{HS} V^*\to \mathbb{C}$ such that for all coefficients $c_{ij}$
\[
B\left(\sum_{ij}c_{ij}v_i\otimes_{HS} \lambda_j\right)=\sum_{ij}c_{ij}\beta(v_i,\lambda_j)=\sum_ic_{ii}.
\]
However, $\sum_i\frac{1}{i}v_i\otimes_{HS}\lambda_i$ is in $V\otimes_{HS}V^*$, but the necessary value of $B$ is impossible since $\sum_i \frac{1}{i}$ diverges.


Nuclear spaces have genuine tensor products \cite{Gar20}*{Theorem 7.5}, and countable projective (directed) limits of Hilbert spaces with Hilbert-Schmidt transition maps are the simplest types of nuclear spaces \cite{Gro55}: Let $V,W,V_1,W_1$ be Hilbert spaces with Hilbert-Schmidt maps $S: V_1\to V$ and $T:W_1\to W$. For any (jointly) continuous $\beta: V\times W\to X$ there is a unique continuous (and Hilbert-Schmidt) map $B:V_1\otimes_{HS}W_1\to X$ giving a commutative diagram
\[
\begin{tikzcd}
V_1\otimes_{HS}W_1\arrow{r} \arrow[bend left=60,"B", dashed]{drr} & V\otimes_{HS}W\\
V_1\times W_1 \arrow["S\times T"]{r} \arrow{u} & V\times W \arrow["\beta"]{r} \arrow{u} & X.
\end{tikzcd}
\]
Let $v_i$, $w_j$ be orthonormal bases for $V$ and $W$, respectively. By the continuity of $\beta$,
\[
\sum_{ij}\beta(Sv_i,Tw_j)\leq C\sum_{ij}|Sv_i|^2\cdot|Tw_j|^2=C\cdot|S|^2_{HS}\cdot |T|^2_{HS}<\infty.
\]
A continuous map $B: V_1\otimes_{HS}W_1\to X$ fitting into the diagram would have
\[
B\left(\sum_{ij}c_{ij}v_i\otimes w_j\right)=\sum_{ij}c_{ij}\beta(Sv_i,Tw_j),
\]
and since
\[
\sum_{ij}|c_{ij}\beta(Sv_i,Tw_j)|^2\leq \sum_{ij}|c_{ij}|^2\cdot \sum_{ij}|\beta(Sv_i,Tw_j)|^2\leq \sum_{ij}|c_{ij}|^2\cdot \left(C\cdot |S|^2_{HS}\cdot|T|^2_{HS}\right)
\]
by Cauchy-Schwarz-Bunyakowsky, $B$ is Hilbert-Schmidt (and exists).

The simplest examples of nuclear spaces are the projective limits $H^\infty(\mathbb{T}^m)$ of non-negative index Sobolev spaces on products of circles
\[
H^s(\T^m)=\left\{\sum_{\xi\in\Z^m} c_\xi e^{i\xi\cdot x}\in L^2(\T^m): |c_\xi|^2\dot(1 + |\xi|^2)^s<\infty\right\}\hspace{5mm}(\text{for }s\geq 0),
\]
originally defined as completions of smooth functions on $\T^m$. The Hilbert-Schmidt property is proven by a Rellich lemma, and 
the tensor product 
\[
H^\infty(\T^m)\otimes H^\infty(\T^n)\approx H^\infty(\T^{m+n})
\]
is constructed as
\[
\lim_{s} H^s(\T^m)\otimes \lim_{s} H^s(\T^n)=\lim_{s}\bigg(H^s(\T^m)\otimes_{HS}H^s(\T^n)\bigg).
\]

Then, Cartan Eilenberg yields the simplest example of Schwartz' Kernel Theorem: there is an isomorphism
\[
\text{Hom}^o(H^\infty(\T^m),H^{-\infty}(\T^n))\approx H^{-\infty}(\T^{m+n})
\]
where $\text{Hom}^o$ is the space of continuous linear maps and $H^{-\infty}$ is equipped with the weak-dual topology. The isomorphism is induced by
\[
(f\to (F\to \Phi(f\otimes F)))\leftarrow \Phi,
\]
and the distribution $\Phi\in H^{-\infty}(\T^{m+n})$ producing a given continuous map $H^\infty(\T^m)\to H^{-\infty}(\T^n)$ is the Schwartz kernel of the map.

\subsection{Friedrichs' extensions of restrictions of semi-bounded operators}
\label{Friedrichs}
Crucial to \cite{LP76}*{pp. 204-206}, \cite{CdV82}, \cite{BG20}, and Section \ref{Results} is Friedrichs' self-adjoint extension of semi-bounded operators on Hilbert spaces \cite{Fri34}.

For $T$ a symmetric, semi-bounded operator with domain $D$ dense in some Hilbert space $V$, Friedrichs' extension $\tilde T: \tilde D=\tilde T^{-1}V\to V$ is characterized by
\[
\langle (1+T)v,(1+\tilde T)^{-1}w\rangle=\langle v,w \rangle\hspace{5mm}\text{(for }v\in D, w\in V).
\]
Unlike the other self-adjoint extensions of symmetric operators classified by Stone \cite{Sto32} and vonNeumann \cite{vN30}, Friedrichs' extension has certain nice properties: for $V^1$ the completion of $D$ with respect to $\langle v,w\rangle_1=\langle(1+T)v,w\rangle$, the resolvent $(1+\tilde T)^{-1}: V\to V^1$ is continuous when $V^1$ has the finer topology and compact if the inclusion $V^1\to V$ is compact with the finer topology on $V^1$.

As in \cite{BG20} and \cite{Gar18}*{Section 11.2}, we can re-characterize Friedrichs' extension to facilitate finer analysis in terms of distributions and even meromorphically continue certain Eisenstein series beyond the critical line {(see Section \ref{Eisenstein})}:

Assume the Hilbert space $V$ has a complex conjugation map $c$ that commutes with $T$ such that the composition of the Riesz-Fr\'echet map is a complex-linear isomorphism between $V$ and its dual $V^*$, and let $j:V^1\to V$ be the continuous injection induced from the identity $D\to D$ with adjoint $j^*:V^*\to (V^{1})^*=V^{-1}$. The continuous linear map $T^\#=j^*\circ c\circ j:V^1\to V^{-1}$ given by
\[
T^\#(x)(y)=\langle x,\bar y\rangle_1\hspace{10mm}(\text{for }x,y\in V^1)
\]
is a topological isomorphism by Riesz-Fr\'echet, and we may use this map to more explicitly identify the domain $\tilde D=\tilde T^{-1}V$ of the Friedrichs extension $\tilde T$. Notice that $T^\#$ is an extension of $j^*\circ c\circ \tilde T$: for $x=\tilde T^{-1}x'\in \tilde D$ with $x'\in V$ and any $y\in V^1$,
\[
(T^\# x)(y)=\langle x,\bar y\rangle_1=((j^*\circ c)x')(y)=((j^*\circ c\circ \tilde T)x)(y),
\]
and for $x\in V^1$ such that $T^\#x=(j^*\circ c)y$ for some $y\in V$, and all $z\in V^1$,
\[
\langle z,\bar x\rangle_1=(T^\# x)(z)=((j^*\circ c)y)(z)=(cy)(jz)=\langle jz,\bar y\rangle=\langle z,\tilde T^{-1}\bar y\rangle_1.
\]
That is, the domain of $\tilde T$ is
\[
\tilde D:= \tilde T^{-1}V= \{x\in V^1: T^{\#}x\in (j^*\circ c)V\}.
\]

To legitimize the discussion in later sections, we discuss Friedrichs' extension of certain restrictions of $T$, following \cite{BG20}. Let $\Theta\subset D$ be a $T$-subspace which is stable under conjugation, $V_\Theta=\Theta^\perp$ the orthogonal complement to $\Theta$ in $V$, and $D_\Theta=D\cap V_\Theta$. Certainly $D_\Theta\subset V^1\cap V_\Theta$, so the $V^1$-closure of $D_\Theta$ is a subset of $V^1\cap V_\Theta$. However, $D_\Theta$ is not necessarily $V^1$-dense in $V^1\cap V_\Theta$. As in the cases of interest, we assume $D_\Theta$ is $V^1$-dense in $V^1\cap V_\Theta$. Under this assumption, the restriction $T_\Theta$ of $T$ to $D_\Theta$ is densely-defined and symmetric on $V_\Theta$ with Friedrichs extension $\tilde T_\Theta: \tilde D_\Theta\to V_\Theta$ and further extension 
\[
T_\Theta^\#: V^1\cap V_\Theta\to (V^1\cap V_\Theta)^*,
\]
characterized by 
\[
(T_\Theta^\# x)(y) = \langle x,\bar y\rangle_1\hspace{10mm}(\text{for }x,y\in V^1\cap V_\Theta).
\]
For $i_\Theta: V^1\cap V_\Theta\to V^1$ and $j_\Theta:V^1\cap V_\Theta\to V_\Theta$ the inclusions with adjoints $i^*_\Theta$ and $j_\Theta^*$, we have the following diagram:
\[
\begin{tikzcd}[column sep=large]
\tilde T\subset T^\#: &[-4em] V^1\arrow[r,"j"] & V\arrow[r,"j^*\circ\hspace{1mm}c"] &V^{-1}\arrow[d,"i^*_\Theta"]\\
\tilde T_\Theta\subset T^\#_\Theta: & V^1\cap V_\Theta\arrow[r,"j_\Theta"]\arrow[u,"i_\Theta"] & V_\Theta\arrow[r,"j_\Theta^*\hspace{1mm}\circ\hspace{1mm}c"]\arrow[u,"i"] & (V^1\cap V_\Theta)^*,
\end{tikzcd}
\]
which has a useful compatibility $T_\Theta^\#=i_\Theta^*\circ T^\#\circ i_\Theta$, since
\[
((i^*_\Theta\circ T^\#\circ i_\Theta)x)(y)=(T^\#\circ i_\Theta x)(i_\Theta y)=\langle i_\Theta x, i_\Theta \bar y\rangle_1=\langle x,\bar y\rangle_1=(T^\#_\Theta x)(y)
\]
for $x,y\in V^1\cap V^\Theta$. By injectivity of the inclusion $i:V_\Theta\to V$, 
\[
\tilde D_\Theta = \{x\in V^1\cap V_\Theta: T^\#_\Theta x \in (j_\Theta^*\circ c)V_\Theta \}= \{x\in V^1\cap V_\Theta: T^\#_\Theta x\in (i^*_\Theta\circ j^*\circ c)V\}.
\]
Furthermore, $T_\Theta^\# x= (i^*_\Theta\circ j^*\circ c)y$ for $y\in V$ if and only if $(T^\#\circ i_\Theta)x=(j^*\circ c)y+\theta$ for some $\theta\in \ker i^*_\Theta$, the $V^{-1}$ closure of $(j^*\circ c)\Theta$. 

\subsection{Perturbations of linear operators}
\label{Perturbations}

In his introduction to \cite{Kat66}, Kato credits Rayleigh \cite{Ray94} and Schr\"odinger \cite{Sch28} with the creation of perturbation theory of linear operators, the study of physical systems which deviate slightly from a simpler ideal system for which a (more) complete solution is known. Although such operators had been used by physicists to successfully predict physical phenomena (see e.g. \cite{Dir28}, \cite{Dir30}, \cite{Tho35}, \cite{BP35}), analytical considerations such as convergence and self-adjointness were not rigorously considered at least until a series of papers by Rellich, Friedrichs, Kato, and many others decades later. Self-adjointness of Hamiltonians $-\Delta + q$ was originally established by the Kato-Rellich theorem of 1939 and the Kato, Lions, Lax-Milgram, Nelson (KLMN) theorem. \cite{BF61}, \cite{Kat72}, \cite{Sim73}, \cite{Sim73b} and many others greatly broadened the class of potentials for which the corresponding Schr\"odinger operator was essentially self-adjoint.

 At the time, and currently, solvable models such as $(\Delta - \lambda)u=\delta(u)\cdot \delta$  were casually rewritten by physicists as $((-\Delta + \delta)-\lambda)u=0$, viewing $-\Delta +\delta$ as a perturbation of $-\Delta$ by a singular potential. Indeed,  as is now well understood, $(\Delta-\lambda)u=\delta(u)\cdot \delta$ can be rearranged to $(\Delta - \delta\otimes \delta-\lambda)u=0$.

Complementing the theory of singular potentials, \cite{Kat51} established a fairly complete theory for Hamiltonians $S=-\Delta + q$, with $q$ in one of four classes $L^2+L^{\infty}$, $L^2+(L^\infty)_{\varepsilon} = \{f: f=x + g_{\varepsilon}: x\in L^2, \norm{g_{\varepsilon}}_{\infty}<\varepsilon\}$, $L^2$, and $L^2\cap L^1$. \cite{Sim71} extended this theory beyond $L^2$ potentials to match physical phenomema by defining the sum of operators $-\Delta + q$ as a quadratic form, as suggested by Nelson and Faris, instead of as the sum of the Laplacian and a multiplication operator.  
\section{Applications of modern analysis of automorphic forms}
\label{Applications}
In Section \ref{DiffEQ}, we show how to use automorphic spectral expansions to solve differential equations on the Sobolev spaces defined in Section \ref{Sobolev}. Section \ref{GreenFn} demonstrates the utility of this viewpoint by computing the simplest automorphic Green's function. In Section \ref{ExoticEfns}, we recall the seemingly paradoxical Lax-Phillips discretization of the continuous spectrum of $\Delta$ and demonstrate how to identify the ``exotic" eigenfunctions essential to reprove meromorphic continuation of certain Eisenstein series in \cite{CdV82} and \cite{Gar18}, recounted in Section \ref{Eisenstein}. {Most provocatively, in Section \ref{Spacing}, we demonstrate how the natural self-adjoint operators of Colin de Verdi\`ere, Lax-Phillips, and Bombieri-Garrett have some implications for the spacing of zeros of $\zeta_k(s)$ on the critical line, for  $k=\Q(\sqrt d)$ quadratic fields with $d<0$.}

\subsection{Solving automorphic differential equations by division}
\label{DiffEQ}
We are now equipped to solve certain differential equations in automorphic forms.

Given $f\in \B^{-\infty}(\Gamma\backslash G/K)$ and $\lambda\in \C$, we want to solve 
\[
(\Delta-\lambda)u=f
\]
for $u\in \B^{-\infty}(\Gamma\backslash G/K)$, when possible. Applying the spectral transform $\mathcal{F}$ to both sides yields
\[
\mathcal{F}f = \mathcal{F}(\Delta-\lambda)u=(\lambda_\xi -\lambda)\mathcal{F}u,
\]
where both $\mathcal{F}f$ and $\mathcal{F}u$ are in weighted $L^2$ spaces on the spectral parameter space $\Xi$. {For complex $\lambda\notin(-\infty,0]$, the function $\lambda_\xi-\lambda$ is bounded away from 0 on $\Xi$}, and we can divide to obtain
\[
\mathcal{F}u = \frac{\mathcal{F}f}{\lambda_{\xi}-\lambda},
\]
such that
\[
u = \int_{\Xi}\frac{\mathcal{F}f(\xi)}{\lambda_\xi - \lambda}\cdot \Phi_\xi\hspace{1mm} d\xi\hspace{10mm}(\text{convergent in }\B^{-\infty})
\]
is a solution. In fact, this solution is unique in $\B^{-\infty}$, since any solution $v$ to the homogeneous equation would have spectral expansion $v=\int_{\Xi}\mathcal{F}v(\xi)\cdot\Phi_\xi \hspace{1mm}d\xi$ for $\mathcal F v=0$ almost everywhere.

For $\lambda\in (-\infty,0]$, solutions to $(\Delta-\lambda)u=f$ are expressible in terms of the eigenvalues $\lambda_\xi$ and eigenfunctions $\Phi_\xi$ {of $\Delta$}. Consider the simplest case $G=SL_2(\R)$, where $f\in \B^{-\infty}(\modcurve)$ has spectral expansion
\[
f = \sum_{\text{cfm }F}\langle f,F\rangle\cdot F + \frac{\langle f,1\rangle\cdot 1}{\langle1,1\rangle}+\frac{1}{4\pi i}\int_{(\frac{1}{2})}\langle f,E_s\rangle\cdot E_s\hspace{1mm} ds\hspace{10mm}(\text{convergent in }\B^{-\infty}),
\]
{where the indicated pairings and integrals are extensions by continuity of the literal pairings and integrals.} Denote $\lambda_s=s(s-1)$. For $\text{Re}(w)>\frac{1}{2}$, and $w\neq 1$, the equation $(\Delta-\lambda_w)u=f$ has solution
\[
u=\sum_{\text{cfm }F}\frac{\langle f,F\rangle \cdot F}{\lambda_{s_F}-\lambda_w}+\frac{\langle f,1\rangle\cdot 1}{(\lambda_1 - \lambda_w)\cdot \langle 1,1\rangle}+\frac{1}{4\pi i}\int_{(\frac{1}{2})}\frac{\langle f, E_s\rangle\cdot E_s}{\lambda_s-\lambda_w}ds,
\]
converging in at least $\B^{-\infty}$. For $f\in \B^r$, the spectral characterization shows that $u\in \B^{r+2}$ with the spectral expansion converging in the same Sobolev space.

To meromorphically continue the solution to $(\Delta - \lambda_w)u=f$ as a function of $w$, consider $u=u_w$ as a holomorphic or meromorphic {function-valued function} of $w$. By the spectral characterization of $\B^r$ and $\B^{r+2}$, the cuspidal component
\[
w\to u_w^{\text{cusp}}=\sum_{\text{cfm }F}\frac{\langle f,F\rangle\cdot F}{\lambda_{s_F}-\lambda_w}
\]
is visibly a meromorphic $\B^{r+2}$-valued function of $w\in  \C$, with poles at most at $w=s_F$, discrete points in $\C$ corresponding to the eigenvalues of cuspforms. The constant component is similar.

The continuous spectrum component
\[
w\to u_w^{\text{cts}}=\frac{1}{4\pi i}\int_{(\frac{1}{2})}\frac{\langle f, E_s\rangle\cdot E_s}{\lambda_s - \lambda_w} ds
\]
does not generally meromorphically continue as an $\B^{r+2}$-valued function: for $\lambda_w\leq -1/4$, if $(\Delta-\lambda_w)u=f$ has a solution $u\in \B^{-\infty}$, then $\langle f,E_w\rangle=0$ in the strong sense that $\langle f,E_s\rangle/(\lambda_s-\lambda_w)$ is locally integrable near $s=w$. However, we will show that for $X$ a quasi-complete, locally convex topological vector space containing $\B^{r+2}$ and Eisenstein series, the continuous spectrum component has a meromorphic continuation to $\C$ as an $X$-valued function of $w$, with functional equation
\[
u_w^{\text{cts}} = u_{1-w}^{\text{cts}}-\frac{\langle f,E_w\rangle\cdot E_w}{2w-1}.
\]

The following topological vector space contains both suitable global automorphic Sobolev spaces and Eisenstein series. Let $\mathcal{E}$ be the Fr\'echet space $C^\infty(\Gamma\backslash G/K)$ and $\mathcal{D}$ the space of test functions $C^\infty_c(\Gamma\backslash G/K)$. The quotient $X$ of $\B^r\oplus\hspace{1mm}\mathcal{E}$ by the closure of the anti-diagonal copy $\mathcal{D}^{-\Delta}=\{(\varphi,-\varphi): \varphi\in \mathcal{D}\}$ of $\mathcal{D}$ is a topological vector space fitting into the pushout diagram
\[
\begin{tikzcd}
\mathcal{D}\arrow[swap,d,"\text{inc}"] \arrow[r,"\text{inc}"] & \B^r \arrow[d,dashed]\\
\mathcal{E}\arrow[r,dashed] & X
\end{tikzcd}
\]
and is unique up to unique isomorphism. Furthermore, the seminorms 
\[
\nu_{\mu}(f)=\sup_{w\in K}\mu(f(w))\hspace{5mm}(\text{for }K\subset\Omega\text{ compact})
\]
give $X$ a quasi-complete, locally-convex topology.

We now prove the meromorphic continuation of the continuous component $u^{\text{cts}}_w$. At first for $\text{Re}(w)>\frac{1}{2}$,
\begin{align*}
u_w^{\text{cts}} &= \frac{1}{4\pi i}\int_{(\frac{1}{2})}\frac{\langle f, E_s\rangle\cdot E_s-\langle f,E_w\rangle\cdot E_w}{\lambda_s-\lambda_w}ds + \frac{1}{4\pi i}\int_{(\frac{1}{2})}\frac{\langle f,E_w\rangle\cdot E_w}{\lambda_s - \lambda_w}ds\\
&= \frac{1}{4\pi i}\int_{(\frac{1}{2})}\frac{\langle f,E_s\rangle\cdot E_s-\langle f,E_w\rangle\cdot E_w}{\lambda_s-\lambda_w}ds - \frac{\langle f, E_w\rangle\cdot E_w}{2(2w-1)}.
\end{align*}
The integral looks better behaved near $s=w$, but the appearance is misleading since it is not a literal integral. With $t=\text{Im}(s)$,
we can split the remaining integral at some height $T$:
\begin{align*}
\int_{(\frac{1}{2})}\frac{\langle f,E_s\rangle\cdot E_s-\langle f,E_w\rangle\cdot E_w}{\lambda_s-\lambda_w}\hspace{1mm}ds&= \int_{|t|>T}\frac{\langle f,E_s\rangle\cdot E_s-\langle f,E_w\rangle\cdot E_w}{\lambda_s-\lambda_w}\hspace{1mm}ds\\
&+ \int_{|t|\leq T}\frac{\langle f,E_s\rangle\cdot E_s-\langle f,E_w\rangle\cdot E_w}{\lambda_s-\lambda_w}\hspace{1mm}ds.
\end{align*}
The integral over $|t|>T$ is 
\[
\int_{|t|>T}\frac{\langle f,E_s\rangle\cdot E_s}{\lambda_s-\lambda_w}ds - \langle f,E_w\rangle E_w\cdot \int_{|t|>T}\frac{ds}{\lambda_s-\lambda_w}.
\]
While the first term is $\B^{r+2}$-valued, the second term takes values $X$, unless $\langle f, E_w\rangle$ vanishes. The integrand of
\[
\int_{|t|\leq T}\frac{\langle f,E_s\rangle\cdot E_s-\langle f,E_w\rangle\cdot E_w}{\lambda_s-\lambda_w}ds
\]
is a compactly supported, holomorphic $X$-valued function of the complex variables $s,w$ away from the diagonal $s=w$. By inspection of the vector-valued power-series expansion of the integral \cite{Gar18}*{Claim 15.8.1}, there is also cancellation on the diagonal, so the integrand is a holomorphic $X$-valued function of $s,w$. For any complex-analytic $X$-valued function $f(s,w)$ on some domain $\Omega_1\times \Omega_2\subset \C^2$, the function $s\to (w\to f(s,w))$ is a holomorphic $\text{Hol}(\Omega_2,X)$-valued function on $\Omega_1$. Thus for $\Omega$ an appropriate bounded open set containing the set where $|t|\leq T$, the integrand over $|t|\leq T$ is a compactly-supported, continuous $\text{Hol}(\Omega,X)$-valued function of $s$, and has a {Gelfand-Pettis integral} in $\text{Hol}(\Omega, X)$. That is, $w\to u_w^{\text{cts}}$ has a meromorphic continuation as an $X$-valued function of $w$.

Having proven its meromorphic continuation, we may use the integral expression of $u_w^{\text{cts}}$ to derive its functional equation. At first for $\text{Re}(w)<\frac{1}{2}$,
\begin{align*}
u_w^{\text{cts}}&= \frac{1}{4\pi i}\int_{(\frac{1}{2})}\frac{\langle f,E_s\rangle\cdot E_s-\langle f,E_w\rangle\cdot E_w}{\lambda_s-\lambda_w}ds - \frac{\langle f, E_w\rangle\cdot E_w}{2(2w-1)}\\
&= \frac{1}{4\pi i}\int_{(\frac{1}{2})}\frac{\langle f,E_s\rangle\cdot E_s}{\lambda_s-\lambda_w}ds - \frac{\langle f, E_w\rangle\cdot E_w}{(2w-1)} = u_{1-w}^{\text{cts}}- \frac{\langle f, E_w\rangle\cdot E_w}{(2w-1)},
\end{align*}
since $\text{Re}(1-w)>\frac{1}{2}$ when $\text{Re}(w)<\frac{1}{2}$. This identity extends to all $w\in \C$ away from poles by the identity principle.

\subsection{Computing the simplest automorphic Green's Function}
\label{GreenFn}

\noindent To illustrate the usefulness of the previous discussion, we compute the automorphic Green's function on the simplest quotient. \cite{Hub55} and \cite{Sel54} had independently considered such matters in the context of lattice-point problems in hyperbolic spaces. \cite{Neu73} considers the convergence and meromorphic continuation of an automorphic Green's function formed by winding up free-space Green's functions. Complications or failure of spectral expressions to converge pointwise can be avoided by considering convergence in suitable global automorphic Sobolev spaces, as in \cite{Gar18}*{Example 12.6}, \cite{DeC12}, \cite{DeC16}, \cite{DeC21} and Section \ref{Sobolev}.

Since $\B^s\subset C^o$ for any $s>\frac{r}{2}$, $f\to f(z)=\delta_{z}(f)$ is a continuous linear functional on $\B^s$. That is, $\delta_z\in \B^{-s}$ with spectral expansion
\[
\delta_{z}=\int_{\Xi} \overline{\Phi_\xi}(z)\cdot \Phi_\xi d\xi\hspace{5mm}(\text{convergent in }\B^{-s}).
\]
Thus, for $G=SL_2(\R)$, $\text{Re}(w)>\frac{1}{2}$ and $w\neq 1$, the equation $(\Delta - \lambda_w)u=\delta_z$ has a solution $u=u_w$ given by
\[
u=\sum_{\text{cfm }F}\frac{\overline{F(z)} \cdot F}{\lambda_{s_F}-\lambda_w}+\frac{1}{(\lambda_1 - \lambda_w)\cdot \langle 1,1\rangle}+\frac{1}{4\pi i}\int_{(\frac{1}{2})}\frac{E_{1-s}(z)\cdot E_s}{\lambda_s-\lambda_w}ds\hspace{5mm}(\text{convergent in }\B^{1-\varepsilon})
\]
for any $\varepsilon>0$. Furthermore, $w\to u_w$ has a meromorphic continuation in a topological vector space $X$, which includes $\B^{1-\varepsilon}$ and Eisenstein series.

We may also compute the constant term of this Green's function in a more direct style than \cite{Nie73} and \cite{Fay77}:

\begin{theorem} For $\text{Re}(w)>\frac{1}{2}$, $w\notin (\frac{1}{2},1]$, and $a\geq \text{Im}(z)$, the solution $u_w=u_{w,z}$ in $\B^{1-\varepsilon}$ of the equation $(\Delta - \lambda_w)u_w=\delta_z$ has constant term
\[
c_P u_w(ia) =\int_0^1 u_w(x+ia) dx = a^{1-w}\cdot \frac{E_w(z)}{1-2w},
\]
where $z=x+iy$ on $G/K\approx\mathfrak{H}$.
\end{theorem}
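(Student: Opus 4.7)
The plan is to apply $c_P$ termwise to the spectral expansion of $u_w$ from Section \ref{GreenFn}, use the Eisenstein functional equation to collapse the continuous-spectrum contribution to a single integral, and then shift that contour to the right to extract the $a^{1-w}$ coefficient via residues.

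First I would determine the shape of the answer abstractly. Since $c_P$ commutes with $\Delta$ and since $\delta_z$ is supported on the $\Gamma$-orbit of $z$, which has maximal imaginary part $y=\text{Im}(z)$, applying $c_P$ to $(\Delta-\lambda_w)u_w=\delta_z$ gives $(\Delta-\lambda_w)c_Pu_w(ia)=0$ on $a>y$. Hence $c_Pu_w(ia)=A(z)a^w+B(z)a^{1-w}$ there, and $u_w\in\B^{1-\varepsilon}\subset L^2$ together with $\text{Re}(w)>1/2$ forces $A(z)=0$ (the $a^w$ term fails $L^2$-integrability at the cusp). The task reduces to identifying $B(z)$; the boundary equality at $a=y$ will then follow by continuity, since $u_w$ has only a logarithmic singularity at $z$ which is integrable along any horocycle through $z$.

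Next I would take constant terms term-by-term in the expansion. Cuspforms contribute $0$; the constant-function piece contributes $-1/(\lambda_w\langle 1,1\rangle)$ (using $\lambda_1=0$); and the Eisenstein integrand becomes $E_{1-s}(z)(a^s+\phi(s)a^{1-s})/(\lambda_s-\lambda_w)$, where $\phi$ is the scattering factor satisfying $\phi(s)\phi(1-s)=1$ and $E_{1-s}=\phi(1-s)E_s$. Substituting $s\leftrightarrow 1-s$ in the $a^s$ piece (which preserves both the contour $(1/2)$ and $\lambda_s$) identifies it with the $a^{1-s}$ piece via the functional equation, collapsing the Eisenstein contribution to
\[
\frac{1}{2\pi i}\int_{(1/2)}\frac{E_s(z)\,a^{1-s}}{\lambda_s-\lambda_w}\,ds.
\]

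Then, factoring $\lambda_s-\lambda_w=(s-w)(s+w-1)$, I would shift the contour from $(1/2)$ to $(\sigma_0)$ for any $\sigma_0>\max(1,\text{Re}(w))$. Two simple poles are crossed: at $s=1$ (using $\text{Res}_{s=1}E_s=1/\langle 1,1\rangle$) the residue is $-1/(\lambda_w\langle 1,1\rangle)$, which exactly cancels the constant-function contribution above; at $s=w$ the residue is $E_w(z)a^{1-w}/(2w-1)$. The remaining terms assemble to $c_Pu_w(ia)=-E_w(z)a^{1-w}/(2w-1)=a^{1-w}E_w(z)/(1-2w)$, and continuity handles $a=y$.

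The main obstacle is showing that the shifted integral on $(\sigma_0)$ vanishes as $\sigma_0\to\infty$, which is where the hypothesis $a\geq y$ is used. From the Fourier expansion $E_s(z)=y^s+\phi(s)y^{1-s}+(\text{exponentially decaying cuspidal tails})$, the dominant piece yields integrand of size $a(y/a)^s/|\lambda_s|$ which vanishes for $a>y$ by dominated convergence; the scattering term $\phi(s)y^{1-s}\cdot a^{1-s}$ is handled using Stirling-type decay of the $\Gamma$-ratio in $\phi$ on vertical lines, and cuspidal tails decay exponentially in $y$. The vector-valued nature of the integrals stays legitimate inside the topological vector space $X$ from Section \ref{DiffEQ}, and the identity principle extends the resulting equality of meromorphic $X$-valued functions of $w$ throughout the indicated region.
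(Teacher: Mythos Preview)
Your proposal is correct and follows essentially the same route as the paper: apply the constant-term functional $\eta_a$ to the spectral expansion of $u_w$ (the paper justifies this via the Sobolev pairing $\B^{-\frac{1}{2}-\varepsilon}\times\B^{1-\varepsilon}$ and extended asymmetrical Plancherel), collapse the Eisenstein integral to $\frac{1}{2\pi i}\int_{(1/2)}\frac{a^{1-s}E_s(z)}{\lambda_s-\lambda_w}\,ds$ via the functional equation, then push the contour rightward so that the residue at $s=1$ cancels the constant-function piece and the residue at $s=w$ yields the answer. Your opening paragraph determining the abstract shape $B(z)a^{1-w}$ from $L^2$-membership is sound motivation but not part of the paper's argument, and your more explicit treatment of why the shifted integral vanishes (splitting $E_s(z)$ into constant term plus rapidly decaying tail) spells out what the paper compresses into ``by elementary estimates'' and the theory of the constant term.
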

\begin{proof}
We recall the proof of \cite{Gar18}*{Theorem 12.6.1}. The orbits of $(N\cap \Gamma)\backslash N$ are compact and codimension 1, so the distribution 
\[
\eta_a f = c_P f(ia) = \int_0^1 f(x+ia)dx
\]
which evaluates the constant term at height $a$ is in $\B^{-\frac{1}{2}-\varepsilon}$ for every $\varepsilon > 0$. Since $u_w\in \B^{1-\varepsilon}$ for every $\varepsilon>0$, $\eta_a$ gives a continuous linear functional on a Sobolev space containing $u_w$, and by the extended asymmetrical form of Plancherel,
\begin{align*}
\eta_a(u_w) &= \int_{\Xi}\mathcal{F}\eta_a\cdot \overline{\mathcal{F}u_w}= \frac{\eta_a(1)\cdot \delta_z(1)}{(\lambda_1 - \lambda_w)\langle 1,1\rangle} + \frac{1}{4\pi i}\int_{(\frac{1}{2})} \frac{\eta_a E_{1-s}\cdot \delta_zE_s}{\lambda_s-\lambda_w} ds\\
&= \frac{1}{(\lambda_1 - \lambda_w)\langle 1,1\rangle} + \frac{1}{4\pi i}\int_{(\frac{1}{2})} \frac{(a^{1-s}+c_{1-s}a^s)\cdot E_s(z)}{\lambda_s-\lambda_w} ds.
\end{align*}
By the functional equation for the Eisenstein series and the change of variables $s$ to $1-s$, the integral of $c_{1-s}a^s\cdot E_s(z)/(\lambda_s-\lambda_w)$ produces a copy of the integral of $a^{1-s}\cdot E_s(z)/(\lambda_s-\lambda_w)$ such that
\[
\eta_a(u_w) = \frac{1}{(\lambda_1 - \lambda_w)\langle 1,1\rangle} + \frac{1}{2\pi i}\int_{(\frac{1}{2})} \frac{a^{1-s}\cdot E_s(z)}{\lambda_s-\lambda_w} ds. 
\]
From the theory of the constant term \cite{Gar18}*{Theorem 8.1.1}, the Eisenstein series $E_s(z)$ is asymptotically dominated by its constant term $y^s+c_sy^{1-s}$. For $a\geq y$ and by elementary estimates, the contour $\text{Re}(s)=\frac{1}{2}$ can be pushed indefinitely to the right, picking up residues at $s=1$ (due to the pole of $E_s$) and at $s=w$ (due to the denominator). The constant-function term exactly cancels the residue at $s=1$ and since $\lambda_s-\lambda_w=(s-w)(s+w-1)$,
\[
\eta_a(u_w)= -\text{Res}_{s=w}a^{1-s}\cdot \frac{E_s(z)}{\lambda_s-\lambda_w}=-a^{1-w}\cdot \frac{E_w(z)}{w-1+w}.
\]
This expression gives the meromorphic continuation of $w\to \eta_a(u_w)$, which is the constant term of the meromorphic continuation of $u_w$ to a larger topological vector space $X$, assuming the topology on $X$ is fine enough such that $\eta_a$ is a continuous linear functional on it.
\end{proof}

\subsection{Partial discretization of the continuous spectrum}
\label{ExoticEfns}
Friedrichs' extension of $\Delta$ restricted to test functions in the Lax-Phillips space of $L^2$ pseudo-cuspforms has purely discrete spectrum, consisting of genuine cuspforms and infinitely-many ``exotic" eigenfunctions which are not eigenfunctions of $\Delta$ \cite{LP76}*{pp. 204-206}. The seemingly paradoxical discretization of the continuous spectrum of $\Delta$ is essential to the proof of meromorphic continuation of the simplest Eisenstein series in \cite{CdV82}, and \cite{Gar18}*{Section 11.11} {generalizes the method to prove meromorphic continuation of Eisenstein series for maximal proper parabolic subgroups $P^{r_1,r_2}\subset GL_{r_1+r_2}(\R)$, as an alternate proof to \cite{MW89}*{Appendix}} and Theorem \ref{EpsteinMero} in Section \ref{Degenerate}.

The notion of pseudo-cuspforms is simplest for unicuspidal quotients $\Gamma\backslash G/K$, namely
\[
L^2_a(\Gamma\backslash G/K) = \{f\in L^2(\Gamma\backslash G/K): c_P f(g)=0 \text{ for } \eta(g)\geq a\},
\]
{for $P$ any standard parabolic and $\eta(n_xm_yk)$ a suitable notion of height. Vanishing of the constant term $c_Pf$ above height $a$ is equivalent to orthogonality $ \langle f, \Psi_\varphi\rangle_{L^2}=0$ to pseudo-Eisenstein series $\Psi_\varphi$ with test-function data $\varphi\in C_c^\infty(0,\infty)$ supported on $[a,\infty)$.}

While density of $D_a=C_c^\infty(\Gamma\backslash G/K)\cap L^2_a(\Gamma\backslash G/K)$ in $L^2_a$ is not obvious, $D_a$ is provably dense in $L^2_a(\Gamma\backslash G/K)$ for $a\gg 1$ \cite{Gar18}*{Lemma 10.3.1}. The proof approximates $f\in L^2_a(\Gamma\backslash G/K)$ by test functions, then uses the $a\gg1$ condition to consider well-behaved smooth cut-offs of the constant term near height $a$, with the width of the cut-off region shrinking to 0. The averaged action images $\psi_n\cdot f$ are smooth, but the averaging smears the support of the constant term, depending on the support of $\psi_n$. The condition $a\gg 1$ ensures that the standard Siegel set $\mathfrak{S}_a$ has the property that $\mathfrak{S}_a\cap\gamma\mathfrak{S}_a\neq 0$ implies $\gamma\in \Gamma\cap P$ such that the natural smooth cutting-off of the constant term near the given height interacts with the constant-term vanishing in a controlled manner. Such $a\gg 1$ exists by reduction theory \cite{Gar18}*{Section 1.5}.
While ensuring the required density, the $a\gg 1$ condition means that the proof mechanism does not immediately yield discreteness of genuine cuspforms $L^2_o$. However, it is possible to salvage with a little work \cite{Gar18}*{Theorem 7.1.1}.

The restriction $\Delta_a$ of $\Delta$ to $D_a$ is symmetric, semi-bounded, and densely defined, so it has a self-adjoint Friedrichs extension $\tilde\Delta_a$. Let $\B_a^1$ be the completion of $D_a$ with respect to the Sobolev-like norm
\[
|f|^2_{\B_a^1}=\langle(1-\Delta)f,f\rangle_{L^2}\hspace{5mm}(\text{for }f\in D_a).
\]
Since the inclusion $L^2_a\hookrightarrow \B_a^1$ is compact when $\B_a^1$ has the finer topology \cite{LP76}*{pp. 204-206}, \cite{Gar18}*{Sections 10.7-10.8}, $\tilde \Delta_a$ has compact resolvent on $L^2_a(\Gamma\backslash G/K)$ as the composition of a continuous map and compact inclusion. {Proof of compactness of the inclusion relies on tail estimates \cite{LP76}*{pp. 204-206}, \cite{Gar18}*{Sections 10.4-10.6}}. In the simplest case, the crucial estimate is as follows. Given $\varepsilon>0$, a cut-off $c\geq a$ can be made sufficiently large so that the image of the $\B_a^1$-unit ball $B$ lies in a single $\varepsilon$-ball in $L^2(\modcurve)$. That is, for $f\in \B_a^1$,
\[
\lim_{c\to \infty}\int_{y>c}|f(z)|^2\frac{dxdy}{y^2}\to 0\hspace{5mm}(\text{uniformly for }|f|_{\B^1}\leq 1).
\]

The seeming paradox is that $L^2_a(\Gamma\backslash G/K)$ contains the space of genuine $L^2$-cuspforms and an infinite-dimensional space of pseudo-Eisenstein series. {For example, for $a'<a$ with $a'$ still large enough such that $\gamma\mathfrak{S}_{a'}\cap\mathfrak{S}_{a'}\neq \phi$ implies $\gamma\in \Gamma\cap P$ and for $\varphi\in C_c^\infty(0,\infty)$ supported on $[a',a]$, the pseudo-Eisenstein series $\Psi_\varphi(g)=\sum_{\gamma\in \Gamma}\varphi(\gamma g)$ is identically 0 in the region $\eta(g)>a$}. By the standard spectral decomposition, these pseudo-Eisenstein series are integrals of Eisenstein series, so some part of the continuous spectrum of $\Delta$ becomes discrete for $\tilde\Delta_a$.

Identification of exotic eigenfunctions is also non-trivial. In the simplest case $\modcurve$, the truncated Eisenstein series $\wedge^a E_s$ is an eigenfunction for $\tilde\Delta_a$ if and only if $c_P E_w(x+ia)=0$, and the spectral characterization of the global automorphic Sobolev spaces $\B^s$ shows that all eigenvalues $\lambda_w<-\frac{1}{4}$ arise this way. That is, all eigenfunctions for $\lambda_w<-\frac{1}{4}$ are truncated Eisenstein series \cite{LP76}*{pp. 204-206}, \cite{BG20}. These truncated Eisenstein series are in $L^2$ by the theory of the constant term \cite{Gar18}*{Claim 1.11.3}, but are not smooth, and therefore not eigenfunctions of the elliptic differential operator $\Delta$.

We illustrate an identification method in the simplest case, following \cite{CdV82} with detailed explanation in \cite{Gar18}. Take $a\gg 1$ and use the coordinates $z=x+iy\in\mathfrak{H}$. First we show $(\tilde \Delta_a -\lambda_w)\wedge^a E_w=0$. That is, we will show $(\Delta-\lambda_w)\wedge^a E_w = c\cdot \eta_a$ for some constant $c=c(w,a)$.  Away from $y=a$, $(\Delta -\lambda_w)\wedge^a E_w(z)=0$ locally. In $y\gg 1$, $\Delta$ annihilates all Fourier components of $E_w$ but the constant term, and in the lower part of a sufficiently large Siegel set, the operator also annihilates the constant term. To compute near $y=a$, let $H$ be the Heaviside function $H(y)=0$ for $y<0$ and $H(y)=1$ for $y>0$. Near $y=a$ (and with all equalities and derivatives in $\B^{-1}$ sense), 
\begin{align*}
(\Delta - \lambda_w)\wedge^a E_w(z) &= (\Delta-\lambda_w)\left(H(a-y)\cdot (y^w + c_w y^{1-w})\right)\\
&= (y^2 \frac{\partial^2}{\partial y^2} - w(w-1))\left(H(a-y)\cdot (y^w + c_w y^{1-w})\right)\\
&=y^2\left(\delta'_a\cdot(y^w+c_wy^{1-w})-2\delta_a\cdot(wy^{w-1}+(1-w)c_wy^{-w})\right).
\end{align*}
For $y^w+c_wy^{1-w}=0$, the term with $\delta'_a$ vanishes, and for $y=a$, the rest simplifies to
\[
-2a\delta_a\cdot(wa^{w}+(1-w)c_wa^{1-w})=-2\delta_a\cdot (2w-1)a^{w+1}.
\]
On functions of $y$ independent of $x$, this is $2(2w-1)a^{w+1}\cdot\eta_a\in\B^{-1}$. If $a^w+c_wa^{1-w}\neq 0$, the term with $\delta_a'$ remains, and the expression is not inside $\B^{-1}$ nor in the domain of $\tilde \Delta_a$. That is, $\wedge^a E_w$ is an eigenfunction only if $a^w + c_wa^{1-w}=0$.

For $G=SL_r(\R)$, the notion of pseudo-cuspform is more complicated. Let $A$ be the standard maximal torus of diagonal real matrices
\[
A=\left\{\begin{pmatrix}
a_1 & \\
& \ddots\\
& & a_r
\end{pmatrix}: a_i\in \R
\right\},
\]
$A^+$ the subgroup of positive real diagonal matrices, and $\Phi=\{\alpha_i(a)=\frac{a_i}{a_{i+1}}: i=1,...,r-1\}$ a choice of positive simple roots. By reduction theory \cite{Gar18}*{Section 3.3}, there is a sufficently large Siegel set $\mathfrak{S}=\mathfrak{S}_{t_o}=\{nmk: n\in N^{\text{min}}, m\in A^+, k\in K, \alpha(m)\geq t_o\text{ for all }\alpha\in\Phi\}$ associated to the standard minimal parabolic $P^{\text{min}}$ such that $\Gamma\cdot \mathfrak{S}=G$. For real $a\gg1$, and $Y_a\subset \mathfrak{S}$ given by
\[
Y_a=\{nmk\in \mathfrak{S}: \alpha(m)\geq a\text{ for some }\alpha\in \Phi\},
\]
the relevant space of pseudo-cuspforms is $V_a$, the $L^2$-closure of
\[
D_a=\{f\in C_c^\infty(\Gamma\backslash G/K): \text{ for }g\in Y_a, c_Pf(g)=0,\text{ for all standard parabolics } P\}.
\]
This space is certainly a subset of
\[
L^2_a=\{f\in L^2(\Gamma\backslash G/K):\text{ for }g\in Y_a,c_Pf(g)=0,\text{ for all standard parabolics } P\},
\]
but density of $D_a$ in the bigger space $L^2_a$ is not necessary for proving meromorphic continuation of cuspidal-data Eisenstein series.

As in the simplest case, we can prove that for sufficiently high cut-off heights $\eta$, there must be infinitely-many eigenfunctions for $\tilde\Delta_a$ that were not eigenfunctions for $\Delta$. {For example, let $P=P^{r,r}\subset SL_{2r}$ be a self-associate maximal proper parabolic, $f$ a cusp form on the Levi component $M=M^P$, and $E_{s,f}$ the corresponding cuspidal-data Eisenstein series with constant term $c_P E_{s,f}$. For $A^P$ the center of $M$, $M^1$ the subgroup of $M$ consisting of matrices in $r$-by-$r$ blocks $\begin{pmatrix} a & 0 \\ 0 & d\end{pmatrix}$ with $\det a = 1 = \det d$, and $s\in \C$ such that $c_P E_{s,f}(mm_1)=0$ for all $m_1\in M^1$ and $m\in A^P$ with $\alpha_r(m)=a$, the truncation $\wedge^a E_{s,f}$ is a $\tilde \Delta_a$ eigenfunction in $V_a$ \cite{Gar18}*{Claim 11.11.1}}.

\subsection{Meromorphic continuation of Eisenstein series}
\label{Eisenstein}

Here we recall an argument illustrating the utility of singular potentials.

Although Selberg had already proven meromorphic continuation of Eisenstein series on rank-one quotients in \cite{Sel56} (see also \cite{Roe56a},\cite{Roe56b}) and Langlands \cite{Lan76} had already treated the general case, Colin de Verdi\`ere \cite{CdV82} demonstrated how one can use extensions of restrictions of the Laplacian to give an alternate proof of meromorphic continuation of the simplest Eisenstein series (see also \cite{LP76}*{pp. 204-206}, \cite{Mu96}), with analytic continuation of $\zeta(s)$, as in \cite{Lan76}, \cite{Lan71}, and \cite{Sha78}, as a corollary. Furthermore, the method generalizes to prove meromorphic continuation of cuspidal-data Eisenstein series. {For a general result, see  \cite{MW95}*{Appendix}, and for maximal proper parabolics in $GL_r(\R)$, see \cite{Gar18}*{Sections 11.7-11.13}} or Theorem \ref{EpsteinMero} in Section \ref{Background}.

In short, compactness of the inclusion map $\B^1_a\hookrightarrow L^2_a$ for $a\gg 1$ yields compactness of the resolvent $(1-\tilde\Delta_a)^{-1}$ on $L^2_a$ and meromorphy of $a\to (1-\tilde\Delta_a)^{-1}$ as an operator-valued function taking values in the space of operators $T: L^2_a\to L^2_a$. Eisenstein series differ from truncated Eisenstein series by elementary functions, yielding meromorphic continuation.

First, we recall the argument for meromorphic continuation up to the critical line of the Eisenstein series on $\Gamma\backslash G/K=SL_2(\Z)\backslash \mathfrak{H}$, already known to Roelcke, et alia: For $\tilde \Delta$ the Friedrichs extension of the restriction of $\Delta$ to $D=C_c^\infty(\Gamma\backslash G/K)$ with domain $\tilde D$ and $\B^k$ the index $k$ global automorphic Sobolev space, we have the inclusions $\B^2\subset \tilde D\subset \B^1$. As usual, take $a\gg 1$. The quotient $\Gamma\backslash G/K$ is the union of a compact part $X_\text{cpt}$ and a geometrically simpler, non-compact part
\[
X_\infty =\Gamma_\infty\backslash\{x+iy\in \mathfrak{H}: y\geq a\}\approx (\Z\backslash \R) \times [a,\infty).
\]
Let $a<a'<a''$, and define a smooth cut-off function on $G/K\approx \mathfrak{H}$ given by
\[
\tau(g)=\tau(x+iy)=\begin{cases} 1 & y>a'\\
0 & y<a''.
\end{cases}
\]
We may define a pseudo-Eisenstein series $h_s$ by winding up the smoothly cut-off function $\tau(g)\cdot \text{Im}(g)^s$:
\[
h_s(g) = \sum_{\gamma\in \Gamma_\infty\backslash \Gamma}\tau(\gamma g)\cdot\text{Im}(\gamma g)^s. 
\]
Since $\tau$ is supported on $y\geq a''$, there is at most one non-vanishing summand in the expression for $h_s$ for any $g\in G/K$, and convergence is not an issue. Thus, the {function-valued function} $s\to h_s$ is entire.

Next, consider 
\[
\tilde E_s= h_s - (\tilde\Delta - \lambda_s)^{-1}(\Delta - \lambda_s) h_s.
 \]
 For all $s\in \C$, the smooth function $(\Delta-\lambda_s)h_s$ is supported on the image of the compact set $a\leq y\leq a'$ in $\Gamma\backslash G/K$ and is therefore in $C_c^\infty(\Gamma\backslash G/K)$. For $\lambda_s\notin (-\infty,0]$, that is for $s\in \C$ such that $\text{Re}(s)>\frac{1}{2}$ and $s\notin (\frac{1}{2},1]$, the resolvent $(\tilde \Delta-\lambda_s)^{-1}: L^2\to \B^1$ exists as an everywhere-defined, continuous operator.  That is, $s\to (\tilde\Delta - \lambda_s)^{-1}$ is a holomorphic operator-valued function on $S_\rho=\{s\in\C: \text{Re}(s)>\frac{1}{2}, s\notin \left(\frac{1}{2},1\right]\}$ such that $s\to\tilde E_s-h_s$ is a holomorphic $\B^1$-valued function on $S_\rho$. However, $s\to \tilde E_s-h_s$ is non-trivial. Since $h_s\notin L^2$ and $(\tilde\Delta - \lambda_s)^{-1}(\Delta-\lambda_s)h_s\in \B^1\subset L^2$ for $s\in S_\rho$, their difference $\tilde E_s$ cannot vanish on $S_\rho$.

 We claim $s\to \tilde E_s$ is a meromorphic $\B^1$-valued function on $S_\rho$ and gives a meromorphic continuation of $E_s$ to $\text{Re}(s)>\frac{1}{2}, s\notin (\frac{1}{2},1]$, and it suffices to prove that $u=\tilde E_s-h_s$ is the unique element of $\tilde D$ such that
 \[
 (\tilde \Delta - \lambda_s)u = -(\Delta - \lambda_s)h_s =: -H_s.
 \]
 Uniqueness follows from Friedrichs' construction, since $(\tilde \Delta-\lambda_s):\tilde D\to L^2(\Gamma\backslash G/K)$ is a bijection for $s\in S_\rho$. Furthermore, $\tilde E_s-h_s$ is in $C^\infty(\Gamma\backslash G/K) \cap L^2(\Gamma\backslash G/K)$ with
 \begin{align*}
 \Delta (\tilde E_s - h_s) &= (\Delta - \lambda_s)(\tilde E_s - h_s)+\lambda_s(\tilde E_s-h_s)\\
 &= (\Delta-\lambda_s)(\tilde\Delta-\lambda_s)^{-1}(\Delta-\lambda_s)h_s+\lambda_s(\tilde E_s - h_s)\\
 &=(\Delta-\lambda_s)h_s + \lambda_s(\tilde E_s-h_s)\in L^2,
 \end{align*}
 since $(\tilde\Delta-\lambda_s)^{-1}=(\Delta-\lambda_s)^{-1}$ on $S_\rho$ and $(\Delta-\lambda_s)h_s\in C_c^\infty(\Gamma\backslash G/K)$, the domain of $\Delta$. Thus, $\tilde E_s-h_s\in \B^2\subset \tilde D$, and the following computation is legitimate:
\begin{align*}
(\tilde\Delta - \lambda_s)(\tilde E_s-h_s) = (\tilde \Delta-\lambda_s)\left(h_s-(\tilde\Delta-\lambda_s)^{-1}H_s - h_s\right)=-H_s.
\end{align*}
We also have
\[
(\tilde \Delta - \lambda_s)(E_s-h_s) = -(\Delta-\lambda_s)h_s = -H_s,
\]
since $E_s$ is a $\Delta$-eigenfunction with eigenvalue $\lambda_s$. By uniqueness, $\tilde E_s - h_s$ = $E_s - h_s$ for $s\in S_\rho$, so $\tilde E_s$ gives an analytic continuation of $s\to E_s$ to $S_\rho$ as an $h_s + \B^1$-valued function.

Furthermore, Friedrichs' construction gives a bound for the $L^2$ norm of $E_s-h_s$. Since $(\Delta-\lambda_s)h_s$ has support in the image of $a\leq y \leq a'$,
\[
|(\Delta - \lambda_s)h_s|^2_2\leq \int_0^1 \int_a^{a'}(|\delta h_s|+|\lambda_sh_s|)^2\hspace{1mm}\frac{dxdy}{y^2}\ll_{a,a'}|\lambda_s|^2.
\]
Denote $\lambda_s=c+di$. For $s\in S_\rho$ and $v\in L^2$ arbitrary, since $\tilde\Delta$ is negative-definite and self-adjoint, 
\[
|(\tilde\Delta - \lambda_s)v|_2^2 = |(\tilde\Delta - c)v|_2^2 - i\cdot d\langle(\tilde\Delta -c)v,v\rangle_2 + i\cdot d\langle v, (\tilde\Delta -c)\rangle_2 + d^2|v|^2 {\geq} d^2|v|^2.
\]
Thus, for $s\in S_\rho$, the operator norm of the resolvent is estimated by 
\[
\norm{(\tilde \Delta - \lambda_s)^{-1}}^2\leq \frac{1}{d^2}=\frac{1}{2(\text{Re}(s)-\frac{1}{2})\cdot\text{Im}(s)}\hspace{5mm}(\text{for Re}(s)>\frac{1}{2}, \text{Im}(s)\neq 0),
\]
such that
\begin{align*}
|E_s-h_s|^2_2\leq \norm{(\tilde\Delta - \lambda_s)^{-1}}^2\cdot|(\Delta-\lambda_s)h_s|^2_2\ll_{a,a'}\frac{|\lambda_s|}{(\text{Re}(s)-\frac{1}{2})\cdot\text{Im}(s)}\hspace{1mm}.
\end{align*}

The simplest Eisenstein series has constant term $c_PE_s=y^s+c_sy^{1-s}$, for $\displaystyle c_s=\frac{\xi(2s-1)}{\xi(2s)}$ and $\xi(s)=\pi^{-s/2}\Gamma(s/2)\zeta(s)$ the completed zeta-function, so analytic continuation of $E_s$ to $\text{Re}(s)>\frac{1}{2}$ analytically continues $c_s$ to the same region, yielding the analytic continuation of $\zeta(s)$ to $\text{Re}(s)>0$ off the interval $[0,1]$.

The re-characterization of pseudo-Laplacians \cite{Gar18}*{Section 11.3} yields meromorphic continuation of Eisenstein series beyond the critical line and without the spectral decomposition of $L^2$ (which uses this meromorphic continuation). Referring to the notation of the last section, take $V=L^2(\Gamma\backslash G/K)$, $c$ pointwise conjugation, $D$ the space of automorphic test functions, $T=1-\Delta\big|_D$, $a\gg 1$ large enough such that $D_a=D\cap L^2_a$ is dense in the Lax-Phillips space of pseudo-cuspforms $L^2_a$, and $\Theta$ the space of pseudo-Eisenstein series $\Phi_\varphi$ with data $\varphi\in C_c^\infty(a,\infty)$. Notice $V_\Theta=\Theta^{\perp}=L^2_a$, $V^1$ is the global automorphic Sobolev space $\B^1$ with Hilbert space dual $\B^{-1}$, and the relevant diagram becomes
\[
\begin{tikzcd}[column sep=large]
\tilde \Delta\subset\Delta: &[-4em] \B^1\arrow[r,"j"] & L^2\arrow[r,"j^*\circ\hspace{1mm}c"] &\B^{-1}\arrow[d,"i^*_\Theta"]\\
\tilde\Delta_a\subset \Delta_a^\#: & \B^1\cap L^2_a\arrow[r,"j_\Theta"]\arrow[u,"i_\Theta"] & L^2_a\arrow[r,"j_\Theta^*\hspace{1mm}\circ\hspace{1mm}c"]\arrow[u,"i"] & (\B^1\cap L^2_a)^*.
\end{tikzcd}
\]

By spectral theory on multi-tori (and without the spectral decomposition of $\B^1$ or $\B^{-1}$), the evaluation of the constant term at height $a$ is in the $\B^{-1}$ closure of $\Theta$ for $a\gg 1$ \cite{Gar18}*{Section 11.3}. Furthermore, $D_a$ is $\B^1$-dense in $\B^1\cap L^2_a(\Gamma\backslash G/K)$ \cite{Gar18}*{Section 11.4}. Thus, $\tilde T_\Theta u = f $ for $f\in V_\Theta$ if and only if $u\in V^1\cap V_\Theta$ and $(T^\#\circ i_\Theta)u=(j^*\circ c\circ i)f+\theta$ for some $\theta\in \ker i^*_\Theta$. That is, $\tilde\Delta_a u=f$ for $f\in L^2_a(\Gamma\backslash G/K)$ if and only if $u\in \B^1\cap L^2_a(\Gamma\backslash G/K)$ and $\Delta u= f + c\cdot \eta_a$ for some constant $c$.

The proof of further meromorphic continuation starts as above, with $\tilde \Delta_a$ in place of $\tilde \Delta$. Let $h_s$ be the pseudo-Eisenstein series formed from the smooth cut-off $\tau\cdot y^s$ of $y^s$ and consider
\[
\tilde E_{a,s}=h_s - (\tilde \Delta_a - \lambda_s)^{-1}(\Delta-\lambda_s)h_s.
\]
Since $(\Delta-\lambda_s)h_s$ is compactly supported for all $s\in \C$, it is in $L^2_a$ for $a\gg1$ large enough. For $\lambda_s\notin (-\infty,0]$, $(\tilde\Delta_a-\lambda_s)^{-1}: L^2_a\to \tilde D_a$ is a bijection of $L^2_a$ to the domain of $\tilde\Delta_a$, so $u=\tilde E_{s,a}-h_s$ is the unique element of the domain of $\tilde\Delta_a$ satisfying
\[
(\tilde\Delta_a - \lambda_s)u = -(\Delta-\lambda_s)h_s.
\]
The map $s\to h_s$ is entire, so meromorphy of the resolvent $s\to (\tilde\Delta_a-\lambda_s)^{-1}$ for $s\in S_\rho$ yields the meromorphy of $s\to (\tilde E_{a,s}-h_s)$ as a $(\B^1\cap L^2_a)$-valued function in the same region.

Next we identify $c_P \tilde E_{a,s}$. Since $(\tilde \Delta_a-\lambda_s)^{-1}$ maps $(\Delta-\lambda_s)h_s$ to $L^2_a$, the constant term of $\tilde E_{a,s}$ is the constant term of $h_s$ above $y=a$, namely $y^s$. More generally,
\[
-(\Delta-\lambda_s)h_s=(\tilde\Delta_a - \lambda_s)(\tilde E_{s,a}-h_s)=(\Delta-\lambda_s)(\tilde E_{a,s}-h_s)+C_s\cdot\eta_a
\]
as distributions for some constant $C_s$. Rearranging yields a particularly useful identity:
\[
(\Delta - \lambda_s)\tilde E_{a,s} = -C_s\cdot \eta_a \hspace{5mm}(\text{as distributions}).
\]
Since $\Delta$ is $G$-invariant, it commutes with evaluation of the constant term, so the distribution $(\Delta-\lambda_s)c_P\tilde E_{a,s}$ vanishes away from $y=a$. On $0< y< a$, the distributional differential equation
\[
(y^2\frac{\partial^2}{\partial y^2}-\lambda_s)u = 0
\]
has solutions exactly of the form $A_s y^s + B^s y^{1-s}$ for constants $A_s, B_s$, so $c_P \tilde E_{a,s}$ must be of this form in $0<y<a$. Since $s\to \tilde E_{a,s}$ is meromorphic, so are $A_s,B_s$. In summary,
\[
c_P \tilde E_{a,s}=\begin{cases}
A_s y^s + B_s y^{1-s} & (0<y<a)\\
y^s & (y>a).
\end{cases}
\]
Furthermore, $c_P\tilde E_{a,s}$ continuous at $y=a$. By construction, $h_s$ is smooth, and $(\tilde\Delta_a-\lambda_s)^{-1}f\in \B^1$ for all $f\in L_a^2(\Gamma\backslash G/K)$. Thus $\tilde E_{a,s}$ is locally in $\B^1$ in the sense that $\psi\cdot \tilde E_{a,s}$ is in $\B^1$ for any smooth cut-off $\psi\in C_c^\infty(\Gamma\backslash G/K)$. In fact, we can choose a cutoff such that $c_P(\psi\cdot \tilde E_{a,s})\in \B^1\cap C^o(\Gamma\backslash G/K)$ by Sobolev imbedding on $\R$ \cite{Gar18}*{Section 11.5}.

Meromorphic continuation follows from using another pseudo-Eisenstein series. Let $\chi_{[a,\infty)}$ be the characteristic function of $[a,\infty)$ and
\[
\beta_{a,s}=\chi_{[a,\infty)}(y)\cdot (A_s y^s + B_s y^{1-s}-y^s).
\]
The support of $\beta_{a,s}$ is inside the set where $y\geq a$, so the corresponding pseudo-Eisenstein series $\Phi_{a,s}(g)=\sum_{\Gamma_\infty\backslash \Gamma}\beta_{a,s}(\gamma g)$ has as at most one non-zero summand for each $g\in G/K$ and therefore converges for all $s\in \C$. By \cite{Gar18}*{Section 11.5}, $A_s\cdot E_s = \tilde E_{a,s}+\Phi_{a,s}=B_s\cdot E_{1-s}$ for $A_s,B_s$ not identically 0. Thus, $s\to E_s$ meromorphically continues as a $(h_s+\B^1)$-valued function with functional equation $A_s\cdot E_s=B_s\cdot E_{1-s}$. Certainly, this functional equation reduces to the standard functional equation $E_s=c_sE_{1-s}$, and one can prove that $s\to E_s$ takes values in the space of smooth functions of moderate growth, by the theory of vector-valued integrals and Gelfand-Pettis corollaries \cite{Gar18}*{Section 11.5}. 

\subsection{Spacing of zeros of zeta}
\label{Spacing}
As appears in \cite{BG20}, exotic eigenfunction expansions also have implications for the spacing of zeros of $\zeta_k(s)$ on the critical line, for $k=\mathbb{Q}(\sqrt{d})$ quadratic fields with $d<0$.

For $a\gg_d 1$, let $\eta_a$ be evaluation of the constant term at height $a$ and $\theta$ (the restriction of) {the sum of Dirac deltas at the Heegner points attached to $k$}. For $u_w$ the meromorphically continued solution of $(\Delta-\lambda_w)u_w=\theta$ and $v_w$ that of $(\Delta-\lambda_w)v_w=\eta_a$, a linear combination $xu_w+yv_w$ is in $\ker\theta\cap\ker\eta_a$ if
 \[
 \begin{cases}
 0 = \theta(xu_w+yv_w) = \theta(u_w)x+\theta(v_w)y\\
 0 = \eta_a(xu_w+yv_w) = \eta_a(u_w)x+\eta_a(v_w)y
 \end{cases}
 \]
which has non-trivial solution if and only if
 \[
 \theta(u_w)\cdot\eta_a(v_w)-\eta_a(u_w)\cdot\theta(v_w)=0.
 \]
 From the spectral expansions and pairings, by residues,
 \[
 \eta_a(v_w)=\frac{a^w+c_wa^{1-w}}{1-2w}\cdot a^{1-w}.
 \]
Since $\theta E_w$ is essentially $\displaystyle\zeta_k(w)/\zeta(2w)$,
\[
\eta_a(u_w)=\theta(v_w)=\frac{\theta(E_w)a^{1-w}}{1-2w}=\frac{\zeta_k(w)a^{1-w}}{\zeta(2w)(1-2w)}.
\]
Any exotic eigenfunction of the Friedrichs extension $\tilde T_{a,\theta}$ of $\Delta$ restricted to $\ker\theta\cap \ker \eta_a$ is in $\ker\theta \cap \ker\eta_a$, so the identities above imply that $\lambda_w<-1/4$ is a $\tilde T_{a,\theta}$-eigenvalue if and only if
\[
\theta(u_w)\cdot\frac{a^w+c_wa^{1-w}}{1-2w}\cdot a^{1-w} - \left(\frac{\zeta_k(w)a^{1-w}}{\zeta(2w)(1-2w)}\right)^2=0.
\]

Next consider the Friedrichs extension $\tilde T_{\geq a}$ of $\Delta$ restricted to the space of pseudo-Eisenstein series with test function data and constant term vanishing at heights $y\geq a$. By a slight adaptation of the proof for $\tilde \Delta_a$, the operator $\tilde T_{\geq a}$ has purely discrete spectrum on the $L^2$-closure of $D_{\geq a}$, and for $\{f_n\}$ an orthogonal basis of $\tilde T_{\geq a}$-eigenfunctions, each $f_n\in L^2_a$ has eigenvalue $\lambda_{s_n}$ satisfying $a^{s_n}+c_{s_n}a^{1-s_n}=0$.

Let $j:\B^1\cap L^2_a\to \B^1$ be the inclusion map, $j^*: \B^{-1}\to (\B^1\cap L^2_a)^*$ its adjoint, and $\tilde T^{\#}_{\geq a}: \B^1\cap L^2_a\to (\B^1\cap L^2_a)^*$ the extension of $\tilde T_{\geq a}$ given by
 \[
 \tilde T^{\#}_{\geq a}(u)(v)=\langle u,\bar v\rangle_{\B^1}.
 \]
 Since $\theta$ is (the restriction of) a compactly supported real measure, $\theta \in \B^{-1}$. There is a spectral expansion convergent in $(\B^1\cap L^2_a)^*=j^*\B^{-1}$, a quotient of $\B^{-1}$:
 \[
 j^*\theta = \sum_n(j^*\theta)(f_n)\cdot f_n=\sum_n\theta(jf_n)\cdot f_n = \sum_n \theta(f_n)\cdot f_n\hspace{5mm}(\text{convergent in }j^*\B^{-1}).
 \]
 If $(\Delta -\lambda_w)u_w=\theta$ and $u_w\in \B^1\cap L^2_a$, then certainly $(\tilde T^{\#}_{\geq a}-\lambda_w)u_w = j^*\theta$. Noting that $j$ identifies $u_w$ with its image, this yields
 \[
 j^*\theta = (j^*\circ(\Delta-\lambda_w)\circ j)u_w=(\tilde T^{\#}_{\geq a}-\lambda_w)u_w.
 \]
 The latter equation can be solved by division, producing a spectral expansion
 \[
 u_w=\sum_{n}\frac{\theta(f_n)}{\lambda_{s_n}-\lambda_{w}}\cdot f_n\hspace{5mm}(\text{convergent in }\B^1\cap L^2_a\subset \B^1).
 \]
 
 By de-symmetrized Plancherel, the condition $\theta(u_w)=0$ is
 \[
 0=\theta(u_w)=\sum_{n}\frac{\theta(f_n)^2}{\lambda_{s_n}-\lambda_{w}}\hspace{1mm}.
 \]
By the intermediate value theorem, there is exactly one solution to the latter equation between successive parameters $s_n$ with $\theta(f_n)=0$, so the equations $(\Delta - \lambda_w)u=\theta$ and $\theta u=0$ have a solution $u$ for at most one $w$ in each interval $\text{Im}(s_n)\leq \text{Im}(w)\leq \text{Im}(s_{n+1})$.

For $\log\log t$ large, the spacing of the spectral parameters $s_n$ such that $a^{s_n}+c_{s_n}a^{1-s_n}=0$ is essentially regular. Namely, $\psi(t)=\arg \xi(1+2it)$ satisfies \cite{Tit86}
\[
\psi(t)=t\log t + O\left(\frac{t\log t}{\log\log t}\right)\hspace{5mm}\text{and}\hspace{5mm}\psi'(t)=\log t + O\left(\frac{\log t}{\log\log t}\right).
\]
Thus, given $\varepsilon>0$, there is a $t_o$ sufficiently large such that for $w_1,w_2\in\{\frac{1}{2}+it\in \C: t\geq t_o\}$ such that $\lambda_{w_1}$, $\lambda_{w_2}$ are eigenvalues of the Friedrichs extension $\tilde T_{\theta}$ of $\Delta$ restricted to test functions in the kernel of $\theta$,
\[
\left|\text{Im}(w_1)-\text{Im}(w_2)\right|\geq (1-\varepsilon)\cdot \frac{\pi}{\log t}.
\]

Finally, let
\[
J(w)=\frac{h_d^2}{-\lambda_w \cdot \langle1,1\rangle} + \frac{1}{4\pi i}\int_{(1/2)} \left|\frac{\zeta_k(s)}{\zeta(2s)}\right|^2 - \left|\frac{\zeta_k(w)}{\zeta(2w)}\right|^2 \frac{ds}{\lambda_s - \lambda_w},
\]
with $w=\frac{1}{2}+i\tau$. After re-arranging, the condition that $\lambda_w$ be an eigenvalue of $\tilde T_{a,\theta}$ (or $\tilde T_{\geq a,\theta}$) becomes
\[
\cos(\tau\log a +\psi(\tau))\cdot J(w)=\sin(\tau\log a +\psi(\tau))\cdot\frac{\theta E_{1-w}\cdot\theta E_w}{2\tau}.
\]
Between any two consecutive zeros of $\cos(\tau\log a +\psi(\tau))$, there is a unique $\tau$ such that $\lambda_{\frac{1}{2}+i\tau}$ is an eigenvalue of $\tilde T_{a,\theta}$ \cite{Tit86}. That is, {half} the on-line zeros $w_o$ of $J(w)$, if any, repel upward the on-line zeros $s_0$ of $\zeta_k(s)$, in the sense that, given $\varepsilon>0$, there is a sufficiently large $t_o$ such that above a zero $w_0$ of $J(w)$ with $\text{Im}(w_0)\geq t_o$, the next on-line zero $s_0$ of $\zeta_k(s)$ must satisfy
\[
\left|\text{Im}(s_0)-\text{Im}(w_0)\right| \geq (1/2 -\varepsilon)\cdot \text{average spacing} \geq (1/2 - \varepsilon)\cdot \frac{\pi}{\log t}.
\]

\section{Automorphic Hamiltonians}
\label{Results}
Building on \cite{Dir28}, \cite{Fri34}, \cite{Gro53}, \cite{LP76}*{pp. 204-206}, \cite{CdV82}, \cite{BG20}, and others, we construct a Hamiltonian which discretizes $L^2(\Gamma\backslash G/K)$ and will be shown to characterize a nuclear Fr\'echet automorphic Schwartz space. 

\subsection{Invariant Dirac operators}

Given an $r$-dimensional vector space $V$ over a field $K$ equipped with a quadratic form $q:V\to K$, the coefficients of a (suitable) Dirac operator are in the Clifford algebra. The Clifford algebra is a unital associative algebra $C\ell(V,q)$ and a linear map $i: V\to C \ell(V,q)$ characterized by the following universal property: given any unital associative algebra $A$ over $K$  and a linear map $j:V\to A$ such that for all $v\in V$,
\[
j(v)^2 =  -q(v)1_{A},
\]
there is a unique algebra homomorphism $J: C\ell(V,q)\to A$ such that the following diagram commutes:
\[
\begin{tikzcd}
C\ell(V,q) \arrow[dr,"\exists!\hspace{1mm}J",dashed]\\
V \arrow[u,"i"] \arrow[r, "\forall\hspace{1mm}j",swap] & A.
\end{tikzcd}
\]
Additionally, {as proposed in \cite{Che54} and \cite{Bou59}}, $C\ell(V,q)$ can be constructed as the quotient of the tensor algebra $T(V)$ over $V$ by the two-sided ideal generated by elements of the form $v\otimes v+ q(v)\cdot 1$.\\

To motivate examples, it is useful to note that the relation $j(v)^2=-q(v)1_A$ in $A$ imposes orthogonality relations which describe its structure in coordinates. Namely, for $\{e_i\}$ an orthonormal basis of $V$ embedded in $C\ell(V)$, the relations are
\[
e_i^2 = -1,\hspace{5mm}e_ie_j = -e_ie_j\hspace{3mm} (i\neq j).
\]
For example, with $q(v)=\norm{v}$ the usual Euclidean metric,
\begin{align*}
C\ell(\R,q) \approx \C, \hspace{5mm} C\ell(\R^2,q)\approx\mathbb{H}, 
\end{align*}
for $\mathbb{H}$ the Hamiltonian quaternions. Alternatively, since {all positive-definite quadratic forms on an $r$-dimensional complex vector space are equivalent to $q(z_1,...,z_r)=z_1^2+...+z_r^2$}, 
\begin{align*}
C\ell(\C,q) \approx \C\oplus \C, \hspace{5mm}C\ell(\C^2,q)\approx \C\otimes_{\R}\mathbb{H},
\end{align*}
where $\C\otimes_{\R}\mathbb{H}$ are the biquaternions. In what follows, we assume the standard positive-definite quadratic form and denote $C\ell(V,q)$ as $C\ell(V)$.\\

Next, we recall properties of invariant Dirac operators on Lie groups $G$. As in Section \ref{Laplacian}, we offer a coordinate-free characterization which facilitates the proof of invariance.\\

For $\mathfrak{g}={\mathfrak{p}}\oplus\mathfrak{k}$ a Cartan decomposition and $\rho$ the $K$-equivariant map given below, the invariant Dirac operator is $\mathbb{D}=\rho(\text{id}_{\mathfrak{p}})$.\\

\textbf{\underline{{Coordinate-free characterization of the invariant Dirac operator}}}\\

 \[
\begin{tikzcd}[column sep=large]
\End_{\R}(\mathfrak{p}) \arrow[r, " \approx"] \arrow[d,dash] \arrow[bend left, "\rho"]{rrr} & \mathfrak{p}\otimes\mathfrak{p}^*  \arrow[r, "\approx\hspace{1mm}via\hspace{1mm}\langle\rangle"]  & \mathfrak{p}\otimes\mathfrak{p} \arrow[r, "{incl}"] & C\ell(\mathfrak{p})\otimes U\mathfrak{g} \arrow[d,dash] \\
\text{id}_{\mathfrak{p}} \arrow[rrr, mapsto] & & & \rho(\text{id}_{\mathfrak{p}})=\mathbb{D}.
\end{tikzcd}
\]

This coordinate-free characterization allows us to express the invariant Dirac operator $\mathbb{D}=\rho(\text{id}_{\mathfrak{p}})$ in terms of any orthogonal basis $e_1,...,e_r$ of $\mathfrak{p}$ embedded in $C\ell(\mathfrak{p})$:  For $\lambda_1,...,\lambda_r$ the corresponding dual basis of the dual $\mathfrak{p}^*$, which is characterized by $\lambda_i(e_j)=\delta_{ij}$, and $X_1,...,X_r$ the dual basis for $\mathfrak{p}$ in terms of $\langle -,-\rangle$, characterized by $\langle e_i,X_j\rangle=\delta_{ij}$, we have
 \[
\begin{tikzcd}[column sep=large]
\End_{\R}(\mathfrak{p}) \arrow[r, " \approx"] \arrow[d,dash] \arrow[bend left, "\rho"]{rrr} & \mathfrak{p}\otimes\mathfrak{p}^* \arrow[d,dash] \arrow[r, "\approx\hspace{1mm}via\hspace{1mm}\langle\rangle"]  & \mathfrak{p}\otimes\mathfrak{p} \arrow[d,dash]\arrow[r, "{incl}"] & C\ell(\mathfrak{p})\otimes U\mathfrak{g} \arrow[d,dash] \\
\text{id}_{\mathfrak{p}}\arrow[r, mapsto]   & \sum_{i} e_i\otimes\lambda_i \arrow[r,mapsto]  & \sum_i e_i\otimes X_i \arrow[r,mapsto] & \sum_i e_i\otimes X_i=\mathbb{D}.
\end{tikzcd}
\]
Since $\rho$ is $K$-equivariant and $\text{id}_\mathfrak{p}$ is $K$-invariant, $\dirac=\rho(\text{Id}_\mathfrak{p})$ is $K$-invariant and descends to the quotient $G/K$. Furthermore, right differentiation by $\mathfrak{g}$ commutes with left multiplication by $G$ by associativity, so $\dirac$ descends to the further quotient $\Gamma\backslash G/K$ for $\Gamma\subseteq G$ discrete.\\

Furthermore, $\dirac$ has the expected property:
\begin{theorem}
For $\dirac$ the invariant Dirac operator, $\dirac^2 = -\Delta$ on $G/K$.
\end{theorem}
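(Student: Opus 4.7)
The plan is to compute $\dirac^2$ as an element of $C\ell(\mathfrak{p})\otimes U\mathfrak{g}$ using the basis-expression $\dirac=\sum_i e_i\otimes X_i$, then interpret the resulting differential operator on right-$K$-invariant functions on $G$. Using the coordinate-free characterization already established makes the basis-expansion legitimate, so we work with an orthonormal basis $e_1,\ldots,e_r$ of $\mathfrak{p}$; with respect to $\langle\,,\,\rangle$ the dual basis $X_i$ may be identified with $e_i$ in $\mathfrak{g}$, so $\dirac=\sum_i e_i\otimes e_i$, where the first factor lives in $C\ell(\mathfrak{p})$ and the second in $U\mathfrak{g}$.

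First, expand the square:
\[
\dirac^{\,2}=\sum_{i,j}e_ie_j\otimes X_iX_j=\sum_i e_i^{\,2}\otimes X_i^{\,2}+\sum_{i\neq j}e_ie_j\otimes X_iX_j.
\]
The Clifford relation $j(v)^2=-q(v)\,1$ for the positive-definite form gives $e_i^{\,2}=-1$ in $C\ell(\mathfrak{p})$, so the diagonal part collapses to $-1\otimes\sum_i X_i^{\,2}$. Next, symmetrize the off-diagonal part by pairing $(i,j)$ with $(j,i)$ and using $e_je_i=-e_ie_j$ for $i\neq j$ (the orthogonality relation in $C\ell(\mathfrak{p})$); this turns the sum into $\sum_{i<j}e_ie_j\otimes[X_i,X_j]$, where the bracket is the Lie bracket in $U\mathfrak{g}$.

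Now invoke the Cartan decomposition: $[\mathfrak{p},\mathfrak{p}]\subseteq\mathfrak{k}$, so every $[X_i,X_j]$ is an element of $\mathfrak{k}$. When $\dirac^{\,2}$ is interpreted as a Clifford-coefficient differential operator on right-$K$-invariant functions on $G$, right-differentiation by any $Y\in\mathfrak{k}$ acts as zero, because $(Y\cdot f)(g)=\frac{d}{dt}\big|_{t=0}f(g\,e^{tY})=0$ for $f$ right-$K$-invariant (since $e^{tY}\in K$). The $C\ell(\mathfrak{p})$-factor $e_ie_j$ is inert in this discussion, so the whole cross term vanishes on $G/K$. What remains is $-\sum_i X_i^{\,2}$.

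Finally, identify $\sum_i X_i^{\,2}$ with $\Delta$. Splitting the orthonormal $\mathfrak{g}$-basis used to express Casimir into an orthonormal basis of $\mathfrak{p}$ and one of $\mathfrak{k}$, Theorem~\ref{CasInv} (and the coordinate-free construction preceding it) writes $\Omega=\sum_i X_i^{\,2}+\sum_j U_j^{\,2}$; by the same right-$K$-invariance argument, the $\mathfrak{k}$-part annihilates right-$K$-invariant functions, so on $G/K$ the invariant Laplacian is $\Delta=\sum_i X_i^{\,2}$. Combining the two computations yields $\dirac^{\,2}=-\Delta$ on $G/K$, as claimed. The main obstacle to address carefully is the legitimacy of "dropping" the cross term: one must make explicit that the Clifford and enveloping-algebra tensor factors do not interfere, and that the right-action of $\mathfrak{k}$ on $C^\infty(G/K)$ truly kills each $[X_i,X_j]$; everything else is algebraic bookkeeping in $C\ell(\mathfrak{p})\otimes U\mathfrak{g}$.
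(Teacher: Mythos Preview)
Your proof is correct and follows essentially the same route as the paper: expand $\dirac^2$ in the chosen basis, use the Clifford relations to reduce the diagonal to $-1\otimes\sum_i X_i^2$ and the off-diagonal to $\sum_{i<j}e_ie_j\otimes[X_i,X_j]$, observe that $[\mathfrak{p},\mathfrak{p}]\subset\mathfrak{k}$ kills the cross term on $G/K$, and identify the remainder with $-\Omega_{\mathfrak{g}}$ since $\Omega_{\mathfrak{k}}$ also acts by zero on right-$K$-invariant functions. The paper packages the last step as $\sum_i e_i^2\otimes X_i^2=-1\otimes(\Omega_{\mathfrak g}-\Omega_{\mathfrak k})$, but this is exactly your decomposition of Casimir along $\mathfrak{g}=\mathfrak{p}\oplus\mathfrak{k}$.
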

\begin{proof}
The coordinate-wise orthogonality relations of the Clifford algebra $\Cl(\mathfrak{p})$ yield
\begin{align*}
    \dirac^2&={\sum_i e_i^2\otimes X_i^2} + \sum_{i<j}\left(e_ie_j\otimes X_iX_j + e_je_i\otimes X_jX_i\right)\\
    &=-1\otimes\left(\Omega_\mathfrak{g}-\Omega_\mathfrak{k}\right)+\sum_{i<j}e_ie_j\otimes{[X_i,X_j]},
\end{align*}
where $\Omega_\mathfrak{g}=\Omega$ is the Casimir operator in $U\mathfrak{g}$. Since the Cartan involution is a Lie algebra automorphism, $[X_i,X_j]\in\mathfrak{k}$; furthermore, $\Omega_\mathfrak{k}$ acts by 0 on $G/K$. Thus $\dirac^2= -1\otimes\Omega_\mathfrak{g}=-\Delta$ on $G/K$.
\end{proof}

\subsection{A canonical confining potential}
Next, we motivate the choice of the simplest automorphic Hamiltonian.
\begin{theorem}[S.]
Let $G$ be a semi-simple Lie group with invariant Dirac operator $\dirac$. If $\beta$ acts by central scalars and $\Delta \beta\in \R$, then the Hamiltonian $S=-\Delta-(\dirac\beta)^2$ has ground state $f=e^{-\beta}$ and bottom eigenvalue $\Delta\beta$.
\end{theorem}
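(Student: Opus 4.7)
The plan is in two parts: a direct calculation verifying $S e^{-\beta} = (\Delta\beta)\, e^{-\beta}$, and a sum-of-squares factorization showing $S - \Delta\beta \geq 0$ as a quadratic form, identifying $\Delta\beta$ as the bottom eigenvalue.

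For the eigenvalue step, I would lean on the identity $\dirac^2 = -\Delta$ on $G/K$ established earlier. Because $\beta$ is a central scalar, the chain rule applied componentwise to $\dirac = \sum_i e_i \otimes X_i$ yields $\dirac e^{-\beta} = -(\dirac\beta)\, e^{-\beta}$. Applying $\dirac$ a second time to this Clifford-valued function and collecting terms gives
\[
\dirac^2 e^{-\beta} \;=\; (\dirac^2\beta)\, e^{-\beta} \;+\; (\dirac\beta)^2\, e^{-\beta},
\]
where the crucial step is that the off-diagonal Clifford terms $e_i e_j (X_i\beta)(X_j\beta)$ with $i \neq j$ cancel by the anticommutation $e_i e_j + e_j e_i = 0$, collapsing $(\dirac\beta)^2$ to the central scalar $-\sum_i (X_i\beta)^2 = -|\nabla\beta|^2$. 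Substituting $\dirac^2 = -\Delta$ and $\dirac^2 \beta = -\Delta\beta$ then produces $-\Delta e^{-\beta} = (\Delta\beta + (\dirac\beta)^2)\, e^{-\beta}$, which rearranges to $S e^{-\beta} = \Delta\beta \cdot e^{-\beta}$.

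For the ground-state claim, I would factor $S - \Delta\beta$ via the scalar first-order operators $D_i = X_i + X_i\beta$, with $\{X_i\}$ an orthonormal basis of $\mathfrak{p}$. Formal skew-adjointness of left-invariant vector fields on $L^2(\Gamma\backslash G/K)$ with respect to Haar measure gives $D_i^* = -X_i + X_i\beta$, and an elementary expansion yields
\[
\sum_i D_i^* D_i \;=\; -\sum_i X_i^2 \;-\; \Delta\beta \;+\; |\nabla\beta|^2 \;=\; S - \Delta\beta,
\]
using $-\sum_i X_i^2 = -\Delta$ on $K$-invariant scalar functions (by the same $\mathfrak{k}$-acts-trivially observation underlying the identification $\dirac^2 = -\Delta$). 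Since the right-hand side is manifestly non-negative on the Friedrichs form domain, $\Delta\beta = \inf\text{spec}(S)$; and $D_i\psi = 0$ for every $i$ forces $X_i(\log\psi) = -X_i\beta$, giving $\psi \propto e^{-\beta}$ uniquely.

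The principal obstacle is the Clifford bookkeeping in the second application of $\dirac$: cleanly separating scalar from higher-grade Clifford contributions and verifying the diagonal-versus-off-diagonal collapse $(\dirac\beta)^2 = -|\nabla\beta|^2$ that justifies interpreting the potential as a bona fide scalar-valued function. A secondary concern, not addressed by the statement itself, is that ``ground state'' implicitly requires $e^{-\beta} \in L^2(\Gamma\backslash G/K)$ together with membership in the Friedrichs form domain of $S$; these domain questions are not automatic, and will presumably be supplied by the growth conditions accompanying the specific choice of $\beta$ in the subsequent construction of the confining potential.
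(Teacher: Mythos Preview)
Your proof is correct and rests on the same factorization idea as the paper, but you carry it out componentwise rather than at the Dirac level. The paper writes $S = (\dirac - \dirac\beta)(\dirac + \dirac\beta) + [\dirac\beta,\dirac]$, computes the commutator as $\Delta\beta$ via Leibniz, and invokes mutual adjointness of the Clifford-valued raising and lowering operators $R = \dirac - \dirac\beta$, $L = \dirac + \dirac\beta$ to conclude $S - \Delta\beta = RL \geq 0$, with $Le^{-\beta} = 0$ giving the ground state directly. Your decomposition $S - \Delta\beta = \sum_i D_i^*D_i$ with scalar $D_i = X_i + X_i\beta$ is the same identity unpacked in an orthonormal basis of $\mathfrak{p}$: since $L = \sum_i e_i D_i$, the Clifford anticommutation collapses $RL$ on $K$-invariant scalars to exactly your sum of squares. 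What your version buys is that positivity and the adjoint computation are completely transparent (no need to sort out in what sense $R = L^*$ on Clifford-valued functions), and your separate Part~1 calculation verifying $Se^{-\beta} = (\Delta\beta)e^{-\beta}$ directly is a useful sanity check the paper omits. The paper's version is more compact and keeps the Dirac-operator packaging intact. Your closing remarks about $L^2$-membership and form-domain issues are well taken; the paper defers these to the growth estimates on the specific confining potential in the following subsections.
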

\begin{proof}
By design, $S$ factors into raising and lowering operators
\[
S=-\Delta+q = (\dirac-\dirac\beta)(\dirac+\dirac\beta)+[\dirac\beta,\dirac] =RL+[\dirac\beta,\dirac],
\]
and Leibniz' rule simplifies the commutator
\[
[\dirac\beta,\dirac]f=\dirac f\cdot \dirac\beta-\dirac f\cdot \dirac\beta-f\cdot \dirac^2\beta=f\cdot\Delta\beta.
\]
Since the commutator $[\dirac\beta,\dirac]=\Delta \beta$ is a real scalar, it is also the lowest possible eigenvalue of $S$ by mutual adjointness of the raising and lowering operators; furthermore, $S$ has ground state $f=e^{-\beta}$, since $Lf=0$.
\end{proof}

{\subsection{Kronecker's First Limit Formula}}
\label{KLF}
Largely following the discussion in \cite{Ter73}, \cite{Sie61}, \cite{DIT18}, and \cite{LM15}, we recall Kronecker's first limit formula and several immediately relevant generalizations. \cite{Sie61} elucidates the significance of Kronecker limit formulas in number theory, and \cite{Wei76} traces their historical development.

Introduced in \cite{Kro68}, Kronecker's first limit formula is a two-dimensional analogue of the limit formula for Riemann's $\zeta(s)$:
\[
\lim_{s\to 1}\left(\zeta(s)-\frac{1}{s-1}\right)=\gamma_o,
\]
where $\gamma_o$ is the Euler-Mascheroni constant. For $z \in\mathfrak{H}$, let $\displaystyle Q(u,v)={\frac{\text{Im }z}{|uz+v|^2}}$, and define
\[
\zeta_Q(s)=\sum_{0\neq (m,n)\in \Z^2} Q(m,n)^{-s}= {2}\zeta(2s)E_s(z), \hspace{5mm}\text{Re }s>1.
\]
Kronecker's first limit formula is
\[
\lim_{s\to 1}\left(\zeta_Q(s)-\frac{\pi}{s-1}\right)=2\pi(\gamma_o - \log 2 -\log(\sqrt y|\eta(z)|^2)),
\]
where $\eta(z)$ is Dedekind's eta function.  There are many proofs of this result, including Siegel's proof by Poisson summation \cite{Sie61}*{pp. 4-13}. In his 1903 paper, Epstein obtained an analogous formula \cite{Eps03}*{pp. 644} for his namesake zeta function
\[
Z_r(Q, s) = \sum_{0\neq v\in \Z^r}(v\hspace{1mm}Q\hspace{1mm}v^\top )^{-s}, \hspace{5mm}\text{Re}(s)>\frac{r}{2},
\]
for a positive-definite matrix $Q\in GL_r(\R)$ with Iwasawa decompositions
\[
Q=\begin{pmatrix}1_{\ell} & * \\ 0 & 1_{r-\ell} \end{pmatrix}\begin{pmatrix}A & 0 \\ 0 & D \end{pmatrix}\cdot k
\]
with $A\in GL_{\ell}(\R)$ and $D\in GL_{r-\ell}(\R)$.  As proven by Epstein for $\ell=1$ and for $1\leq \ell<r$ in \cite{Ter73}*{Theorem 4}, 
\[
\lim_{s\to r/2}\left(Z_r(Q,s)-\frac{\pi^{r/2}}{\sqrt{|Q|}\Gamma\left(\frac{r}{2}\right)(s-\frac{r}{2})}\right)= 2Z^*_{r/2}(Q),
\]
where the factor of $2$ on the right-hand side of the equality comes from our normalization of $Z_r(Q,s)$, 
\begin{align*}
Z^*_{r/2}(Q) &= Z_{r-\ell}\left(D,\frac{r}{2}\right) + \frac{\pi^{\frac{r-\ell}{2}}\Gamma \left(\frac{\ell}{2}\right)}{\sqrt{\det D}\cdot\Gamma \left(\frac{r}{2}\right)}\cdot Z^*_{\ell/2}(A)\\
&+ \frac{\pi^{\frac{r}{2}}}{\Gamma\left(\frac{r}{2}\right)} \cdot H_{\ell,r-\ell}\left(Q,\frac{r}{2}\right)+ \frac{\pi^{\frac{r}{2}}}{2\sqrt{\det Q}\cdot\Gamma\left(\frac{r}{2}\right)}\cdot\left(\frac{\Gamma'\left(\frac{\ell}{2}\right)}{\Gamma\left(\frac{\ell}{2}\right)} - \frac{\Gamma'\left(\frac{r}{2}\right)}{\Gamma\left(\frac{r}{2}\right)}\right),
\end{align*}
\[
H_{\ell,r-\ell}\left(Q,\frac{r}{2}\right) = \sum_{\substack{0\neq u\in \Z^{\ell} \\ 0\neq v\in \Z^{r-\ell}}} \frac{\exp^{(2\pi i \cdot vQu^\top)}}{\sqrt{\det D}}\left(\frac{vD^{-1}v^\top}{uAu^\top}\right)^{\frac{\ell}{4}}\\  K_{\frac{\ell}{2}}\left(2\pi\left(uAu^\top vD^{-1}v^{\top}\right)^{1/2}\right),
\]
and $K_{\nu}$ is the modified Bessel function of the second kind defined by
\[
K_{\nu}(z)=\frac{1}{2}\int_0^\infty\exp\left(-z(u+1/u)/2\right)u^{\nu-1}du,\hspace{5mm}\text{for }|\arg z|<\pi/2.
\]  
Terras' proof generalizes the proof of \cite{BG64}*{Theorem 1} for the case of binary quadratic forms.

We now discuss Kronecker limit formulas for Dedekind zeta functions of an ideal class $\mathfrak{a}$ of a number field $K$, defined by
\[
\zeta_K(s,\mathfrak{a}) = \sum_{\mathfrak{b}\sim \mathfrak{a}} (N {\mathfrak{b}})^{-s},\hspace{5mm}(\text{Re }s>1).
\]
Using the classical identity of Dirichlet/Hecke relating $\zeta_K(s,\mathfrak{a})$ for $K=\Q(\sqrt{-D})$ an imaginary quadratic field to the value of $E_s(z)$ at the {Heegner point} $\tau_{\mathfrak{a}}$ (see {\cite{Hec17a}} or e.g. \cite{Gar14}, \cite{Tem11}), 
\[
E_s(\tau_{\mathfrak{a}}) = \frac{\# K^\times}{4}\left(\frac{\sqrt{|-D|}}{2}\right)^s \frac{\zeta_K(s,\mathfrak{a})}{\zeta(2s)},
\]
Kronecker's first limit formula yields a limit formula for $\zeta_K(s,\mathfrak{a})$ of imaginary quadratic field. In \cite{Hec17b}, Hecke proved a similar limit formula for real quadratic fields by relating $\zeta_K(s,\mathfrak{a})$ to an integral of $E_s(z)$ over a {Heegner cycle} in $\mathfrak{H}$. Decades later, \cite{BG84} proved the limit formula for real cubic fields using an identity relating the integral of a minimal parabolic Eisenstein series on $SL_3(\R)$ over a Heegner cycle to the {Rankin-Selberg integral} of a {Hilbert modular Eisenstein series}, and \cite{Efr92} proved a similar result for non-real cubic fields by using a maximal parabolic Eisenstein series. More recently, \cite{LM15} generalize Efrat's method to identify an analogous limit formula for  $\zeta_K(s,\mathfrak{a})$ of a {wide ideal class $\mathfrak{a}$} of a totally real number field $K$ of degree $r\geq 2$. Their proof closely follows \cite{Efr92}, \cite{Fri87}, \cite{Gol15}, and \cite{Ter73}.

\subsection{Sufficient growth of the confining potential}
\label{Growth}
We now give a sufficient condition for a Hamiltonian $S=-\Delta+q$ to have purely discrete spectrum on $L^2(\Gamma\backslash G/K)$ and characterize a nuclear Fr\'echet Schwartz space. {We largely follow the discussion in \cite{Gar13}}. Let $V^n$ be the Hilbert-space completions of automorphic test functions $C_c^\infty(\Gamma\backslash G/K)$ under the Sobolev-like norms
\[
|f|^2_{V^n}=\langle S^nf, f\rangle_{L^2}.
\]
The confining potential $q$ has ``sufficient growth" when the $V^n$ have Hilbert-Schmidt transition maps. {This is convenient because, as discussed in Section \ref{Schwartz}, the projective limit of Hilbert spaces with Hilbert-Schmidt transition maps is nuclear Fr\'echet. Nuclearity of $V^\infty=\lim_n V^n$ guarantees existence of a genuine tensor product and, therefore, gives Schwartz' kernel theorem, by the Cartan-Eilenberg adjunction}. As one might imagine, nuclearity requires more than compactness of the inclusion, and more growth of the potential is required as $\dim_{\R}(\Gamma\backslash G/K)$ increases.

\cite{Gar13} suffices for the simplest case:
{
\begin{theorem}
Let $\Gamma\backslash G/K=SL_2(\Z)\backslash SL_2(\R)/SO(2,\R)$, and let $V^1$ be the Hilbert-space completion of $C^\infty_c(\Gamma\backslash G/K)$ under the Sobolev-like norm
\[
|f|^2_{V^1}= \langle Sf, f\rangle_{L^2}.
\] The inclusion $V^1\to L^2(\Gamma\backslash G/K)$ is compact for any $q(x+iy)\gg y^{\varepsilon}$ for $\varepsilon>0$.
\end{theorem}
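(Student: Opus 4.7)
The plan is to combine a tail decay estimate, which the confining potential $q \gg y^{\varepsilon}$ makes available, with classical Rellich--Kondrachov compactness on the pre-compact part of the standard fundamental domain. Write $\mathfrak{S}$ for the standard Siegel set for $SL_2(\Z)$, and for $c \gg 1$ decompose $\Gamma\backslash\mathfrak{H}$ as the union of the cuspidal tail $\mathfrak{S}_{\geq c} = \mathfrak{S}\cap\{y\geq c\}$ and the pre-compact part $\mathfrak{S}_{\leq c}$. It suffices, by a totally-bounded argument, to show that given $\delta > 0$ one can choose $c$ so that the $V^1$-unit ball $B$ has $L^2$-tail below $\delta$ on $\mathfrak{S}_{\geq c}$ uniformly, and then extract an $L^2$-convergent subsequence from the restrictions of $B$ to $\mathfrak{S}_{\leq c}$.

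For the tail, I would unfold the $V^1$ norm as
\[
|f|^2_{V^1} \;=\; \langle (-\Delta+q)f,f\rangle_{L^2} \;=\; \int_{\Gamma\backslash\mathfrak{H}}|\nabla f|^2\,dx\,dy \;+\; \int_{\Gamma\backslash\mathfrak{H}} q\,|f|^2\,\frac{dx\,dy}{y^2},
\]
valid on $C_c^\infty(\Gamma\backslash G/K)$ and hence on the completion by density. Since $q(x+iy)\gg y^{\varepsilon}$, for $y\geq c$ one has $1 \leq C\, y^{-\varepsilon} q \leq C\, c^{-\varepsilon} q$, so
\[
\int_{\mathfrak{S}_{\geq c}}|f|^2\,\frac{dx\,dy}{y^2} \;\leq\; \frac{C}{c^{\varepsilon}} \int_{\mathfrak{S}_{\geq c}} q\,|f|^2\,\frac{dx\,dy}{y^2} \;\leq\; \frac{C}{c^{\varepsilon}}\,|f|^2_{V^1}.
\]
This is uniform in $f\in B$ and goes to $0$ as $c\to\infty$, giving the tail control.

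For compactness on $\mathfrak{S}_{\leq c}$, note that on this pre-compact region $y$ is bounded above and below, so the invariant measure $y^{-2}\,dx\,dy$ is equivalent to $dx\,dy$; similarly the hyperbolic gradient is uniformly comparable to the Euclidean one. Thus the restriction map $V^1 \to H^1(\mathfrak{S}_{\leq c})$ is continuous, with $H^1$ here the ordinary Sobolev space (with respect to Euclidean measure) on a bounded Lipschitz region. By the classical Rellich--Kondrachov theorem, $H^1(\mathfrak{S}_{\leq c})\hookrightarrow L^2(\mathfrak{S}_{\leq c})$ is compact, so any sequence in $B$ has a subsequence whose restrictions to $\mathfrak{S}_{\leq c}$ converge in $L^2$. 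Combining with the tail estimate via a diagonal argument over $c\to\infty$ yields an $L^2$-Cauchy subsequence in $L^2(\Gamma\backslash\mathfrak{H})$.

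The step I expect to require the most care is the reduction on $\mathfrak{S}_{\leq c}$: one must verify that $|f|^2_{V^1}$ genuinely controls the ordinary $H^1$-norm on this bounded piece, which requires identifying $\langle -\Delta f, f\rangle_{L^2}$ with $\int |\nabla f|^2 \,dx\,dy$ by integration by parts on the fundamental domain (the boundary contributions cancel by $\Gamma$-periodicity) and then using boundedness of $y$ above and below to pass between Euclidean and hyperbolic norms. Everything else is a standard confining-potential Rellich argument of the type used to prove discreteness of spectrum for Schr\"odinger operators $-\Delta + q$ with $q(x)\to\infty$ at infinity.
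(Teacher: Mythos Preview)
Your proposal is correct and follows essentially the same strategy as the paper: a tail estimate using $q\gg y^{\varepsilon}$ to get $\int_{y>c}|f|^2\,\tfrac{dx\,dy}{y^2}\leq C c^{-\varepsilon}|f|^2_{V^1}$, combined with Rellich--Kondrachov on the bounded piece. The only cosmetic differences are that the paper packages the compact part via a partition of unity with coordinate patches identified with $\mathbb{T}^2$ (rather than invoking Rellich on a Lipschitz domain directly) and argues total boundedness rather than sequential compactness, so no diagonal argument is needed there.
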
}

\begin{proof}
The proof is an easier variant of the compactness argument in \cite{LP76}*{pp. 204-206}. By the total boundedness criterion for relative compactness, it suffices to show that the image of the unit $V^1$-ball $B$ is totally bounded in $L^2(\Gamma\backslash G/K)$. That is, we want to show that given $\varepsilon>0$, the image of $B$ in $L^2(\Gamma\backslash G/K)$ can be covered by finitely-many balls of radius $\varepsilon$. The usual {Rellich-Kondrachev lemma on compact Riemannian manifolds} asserts compactness of the standard Sobolev spaces {$H^{k+1}(\T^n)\to H^k(\T^n)$}, so the issue reduces to an estimate on the {tail} of $\Gamma\backslash G/K$:

Given $c\geq 1$, let $Y_o$ be the image of $\frac{\sqrt3}{2}\leq y\leq c+1$ in $\Gamma\backslash G/K$ and $Y_\infty$ the image of $y\geq c$. We may cover $Y_\infty$ by one large open set $U_\infty$ and $Y_o$ by small coordinate patches $U_i$. By compactness of $Y_o$, there exists a finite sub-cover of $Y_o$. Choose a smooth partition of unity $\{\varphi_i\}$ subordinate to the finite sub-cover and $U_\infty$ , and let $\varphi_\infty$ be a smooth function identically 1 on the image of $y\geq c+1$. On $Y_o$, we may write $f\in V^1$ as a finite sum of functions $\varphi_i\cdot f$ with compact support on the $U_i$ identified with $\T^2$, and lying in $H^1(\T^2)$ since
\[
|\varphi_i f|^2_{V^1}=\langle (-\Delta + q)\varphi_i f,\varphi_i f\rangle \ll_i \langle(-\Delta^{E}+1)\varphi_i f, \varphi_i f\rangle=|\varphi_i f|^2_{H^1(\T^2)},
\]
for {$\Delta^E$ the Euclidean Laplacian on $\T^2$}. By the usual Rellich lemma, each copy of the inclusion map $H^1(\T^2)\to L^2(\T^2)$ is compact, so $\varphi_i \cdot B$ is totally bounded in $L^2(\Gamma\backslash G/K)$.

It remains to prove that given $\delta >0$, the cut-off $c$ can be made sufficiently large such that the image of the smoothed {tail} $\varphi_\infty\cdot B$ lies in a single ball of radius $\delta$.  Since $0\leq \varphi_\infty\leq 1$, it suffices to show 
\[
\lim_{c\to\infty}\int_{y>c}|f(z)|^2\frac{dxdy}{y^2}\to 0\hspace{10mm}(\text{uniformly for }|f|_{V^1}\leq 1).
\]
Since $q\gg y^{\varepsilon}$ in $Y_\infty$,
\begin{align*}
\int_{y>c}|f(z)|^2\frac{dxdy}{y^2}\leq c^{-\varepsilon}\int_{y>c}|f(z)|^2y^\varepsilon \frac{dxdy}{y^2}\leq c^{-\varepsilon}|f|^2_{V^1}\leq c^{-\varepsilon}\to 0
\end{align*}
as $c\to \infty$. Thus, the image of $B$ in $L^2(\Gamma\backslash G/K)$ is totally bounded, and by the total boundedness criterion for relative compactness, the inclusion $V^1\to L^2(\Gamma\backslash G/K)$ is compact.
\end{proof}

Next, we identify a growth condition such that the transition maps $V^n \to V^{n-2}$ are Hilbert-Schmidt. Certainly, the transition maps are Hilbert-Schmidt if $-\Delta+q$ has Hilbert-Schmidt resolvent: for  $\{u_i\}$ an orthonormal basis of eigenfunctions for $L^2(\Gamma\backslash G/K)$, with positive eigenvalues $\lambda_i$, the vectors $u_i/\lambda_i^{n/2}$ form an orthonormal basis for $V^n$. With respect to these bases, the inclusions $V^n\to V^{n-2}$ are multiplication maps
\[
\sum_i c_i \frac{u_i}{\lambda_i^{n/2}} \mapsto \sum_i c_i \cdot \lambda_i^{-1}\frac{u_i}{\lambda_i^{(n-2)/2}},
\]
and such a map is Hilbert-Schmidt if and only if $\sum_i (\lambda_i^{-1})^2 < \infty$. The resolvent of the Friedrichs extension of $-\Delta+q$ has eigenvalues $\lambda_i^{-1}$, and the Hilbert-Schmidt property is the same equality.

As in \cite{Gar13}, we can identify a growth condition for Hilbert-Schmidt resolvent in the simplest case: 
\begin{theorem}
\label{HSGrowth}
Let $\Gamma\backslash G/K=SL_2(\Z)\backslash SL_2(\R)/SO(2,\R)$, and let $V^n$ be the Hilbert-space completions of $C^\infty_c(\Gamma\backslash G/K)$ under the Sobolev-like norm
\[
|f|^2_{V^n}= \langle S^nf, f\rangle_{L^2}.
\] 
The Hamiltonian $-\Delta + q$ has Hilbert-Schmidt resolvent when $q(x+iy)\gg y^2$.
\end{theorem}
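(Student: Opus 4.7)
My plan is to reduce the Hilbert-Schmidt property of $(-\Delta+q)^{-1}$ to a Weyl-type eigenvalue counting bound. As noted just before the statement, the resolvent is Hilbert-Schmidt if and only if $\sum_i \lambda_i^{-2} < \infty$, where $\lambda_1\leq\lambda_2\leq\cdots$ are the Friedrichs eigenvalues of $-\Delta+q$. It therefore suffices to show $N(\lambda):=\#\{i:\lambda_i\leq\lambda\}\ll\lambda\log\lambda$, which forces $\lambda_i\gg i/\log i$ and hence convergence. The first move is the min-max principle: from $q\gg y^2$ we obtain constants $c,C>0$ with $q\geq cy^2 - C$ on a standard Siegel set for $\Gamma\backslash G/K$, so $\lambda_i(-\Delta+q)\geq \lambda_i(-\Delta+cy^2)-C$, reducing the task to the model Hamiltonian $S_0 = -\Delta + cy^2$.

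For $S_0$, I would split the quotient at some height $y=a$ by a Dirichlet bracketing, which by min-max only lowers eigenvalues. On the compact lower piece $\{\tfrac{\sqrt 3}{2}\leq y\leq a\}$, the classical Weyl law (via the same Rellich--Kondrachev mechanism used in the previous theorem) contributes $N_0(\lambda)\ll\lambda$. On the cusp $Y_\infty=\{y>a\}$, which is isometric to the half-cylinder $(\Z\backslash\R)\times(a,\infty)$ with measure $dx\,dy/y^2$, Fourier decomposition in $x$ diagonalizes $S_0$ into the family of 1D operators
\[
L_n = -y^2\partial_y^2 + (4\pi^2 n^2 + c)y^2 \hspace{5mm}(n\in\Z)
\]
on $L^2((a,\infty), dy/y^2)$. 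The substitution $u=\log y$ followed by conjugation by $e^{u/2}$ converts each $L_n$ unitarily into the Schr\"odinger operator $\tilde L_n = -\partial_u^2 + \tfrac14 + (4\pi^2 n^2 + c)e^{2u}$ on $L^2((\log a,\infty), du)$, whose exponentially confining potential makes standard semiclassical tools available.

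A semiclassical upper bound on the counting function of each $\tilde L_n$ (obtained, for instance, from Golden--Thompson applied to the heat trace and followed by Chebyshev) yields
\[
N_n(\lambda)\ll \sqrt{\lambda}\cdot\bigl(\log\tfrac{\lambda}{1+n^2}\bigr)_+.
\]
The classical turning point leaves the cusp once $n^2\gg\lambda$, so only $|n|\ll\sqrt\lambda$ Fourier modes contribute, and summing gives
\[
N_\infty(\lambda)=\sum_n N_n(\lambda) \ll \sum_{|n|\ll\sqrt\lambda}\sqrt\lambda\,\bigl(\log\tfrac{\lambda}{1+n^2}\bigr)_+ \ll \lambda\log\lambda.
\]
Combined with the compact-part bound, $N(\lambda)\ll\lambda\log\lambda$, so $\sum_i\lambda_i^{-2}<\infty$.

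The threshold $q\gg y^2$ is what is needed: quadratic growth matches the transverse kinetic contribution $(2\pi ny)^2$ from the nonzero Fourier modes, and, crucially, it supplies the missing confinement on the $n=0$ mode, where the Laplacian alone is a Sturm--Liouville operator whose spectrum would otherwise be continuous. The main obstacle I anticipate is making the semiclassical bound on $N_n(\lambda)$ uniform in $n$ with constants that survive the sum over $n$; the Golden--Thompson route above sidesteps detailed WKB analysis by exposing the sum over $n$ as a tractable integral of a Gaussian heat kernel against the potential, which should close the argument cleanly.
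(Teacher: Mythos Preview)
Your approach via eigenvalue counting is sound and genuinely different from the paper's. The paper does not count eigenvalues at all: after the same reduction to the cusp $\T^1\times[1,\infty)$ and Fourier decomposition in $x$, it \emph{explicitly solves} for the Green's function of each one-dimensional operator $y^2(-\partial_y^2 + (n^2+1))$, writes down the resolvent kernel $a^2 u_a(y)$ in closed form as piecewise exponentials, and verifies the Hilbert--Schmidt criterion directly by showing $\iint |a^2 u_a(y)|^2\,\frac{da}{a^2}\frac{dy}{y^2}<\infty$ via an elementary estimate. The mode-$n$ contribution turns out to be $\ll 1/(n^2+1)$, so the sum over $n$ converges. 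This is more elementary---no bracketing, no semiclassical machinery, just explicit integration---whereas your Weyl-type route is more structural and would likely adapt more readily to the higher-rank case the paper leaves open.

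One correction: Dirichlet bracketing \emph{raises} eigenvalues (imposing vanishing at $y=a$ shrinks the form domain), so it gives a lower bound on $N(\lambda)$, the wrong direction. Use Neumann bracketing instead: allowing discontinuity across $y=a$ enlarges the form domain, lowers eigenvalues, and yields $N(\lambda)\leq N_0(\lambda)+N_\infty(\lambda)$ as you want. With that fix the argument goes through; in fact your sum $\sum_{|n|\ll\sqrt\lambda}\sqrt\lambda\,(\log\tfrac{\lambda}{1+n^2})_+$ is $O(\lambda)$ rather than $O(\lambda\log\lambda)$ by Stirling, though either bound suffices for $\sum_i\lambda_i^{-2}<\infty$.
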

\begin{proof}
As in the proof of compactness of the inclusion $V^1\to L^2(\Gamma\backslash G/K)$, the fact that $H^{{k+2}}(\T^2)\to H^{{k}}(\T^2)$ is Hilbert-Schmidt reduces the discussion to the non-compact but geometrically simpler part of $\Gamma\backslash G/K$.

Specifically, it suffices to consider the restriction $T$ of $S=-\Delta + y^2$ to test functions on the tapering cylinder $X=\T^1\times[1,\infty)$ with measure $\frac{dxdy}{y^2}$. Thus, the domain of $T$ includes test functions on $X$ vanishing to infinite order on the boundary $\partial X=\T^1 \times \{1\}$. Let $\tilde T$ be the Friedrichs self-adjoint extension of $T$.

The circle group $\T^1$ acts on $X$ and {commutes with $T$ and $\tilde T$}. Thus, $L^2(X)$ decomposes orthogonally into components indexed by characters $\psi_n(x)=e^{inx}$ of $\T^1$. The differential equation for the $n^{th}$ component of a fundamental solution $u_a=u_a(y)$ at {$y=a$} is given by
\[
\delta_a = (-\Delta+y^2) (e^{inx}u_a(y))= y^2\left(n^2 -\frac{\partial^2}{\partial y^2}+{1}\right)u_a(y)=y^2\left(-u_a'' + (n^2+{1})u_a\right) 
\]
and simplifies to a (nonhomogeneous) constant-coefficient equation
\[
\frac{\delta_a}{{a}^2} = -u_a'' + (n^2+{1})u_a,\hspace{5mm}(\text{with }a>1).
\]
As usual, we attempt to piece together $u_a$ from solutions $e^{\pm cy}$ to the associated homogeneous equation $-u_a'' +(n^2+{1})u = 0$, where $c=\sqrt{n^2+{1}}\geq 1$. Since $u_a(y)$ must have moderate growth enough as $y\to +\infty$ such that it is in $L^2(X)$ with measure $\frac{dxdy}{y^2}$, go to zero as $y\to 1^+$, be continuous, and be non-smooth enough at $y=a$ to produce the required multiple of $\delta_a$, $u_a$ must be of the form
\[
u_a(y) = \begin{cases} 
A_ae^{cy}+B_ae^{-cy}\hspace{5mm}(\text{for }1<y<a)\\ 
C_a e^{-cy}\hspace{20mm}(\text{for }a<y)
\end{cases}
\]
for some constants $A_a, B_a,C_a$ satisfying the following conditions:
\[
\begin{cases} 
A_ae^c + B_ae^{-c}=0\hspace{48mm}(\text{vanishing at  }y\to 1^+)\\ 
A_ae^{ca} + B_ae^{-ca}=C_ae^{-ca}\hspace{35mm}(\text{continuity at  }y=a)\\
-cC_ae^{-ca} - (cA_ae^{ca}- cB_ae^{-ca})=\frac{1}{a^2}\hspace{14mm}(\text{change of slope by }\frac{1}{a^2}\text{ at } y=a)
\end{cases}
\]
From the first equation, $B_a = -e^{2c}\cdot A_a$ such that the system becomes
\[
\begin{cases} 
A_a(e^{ca} - e^{2c} e^{-ca})=C_ae^{-ca}\hspace{31mm}(\text{continuity at  }y=a)\\
-cC_ae^{-ca} - cA_a(e^{ca}+e^{2c}e^{-ca})=\frac{1}{a^2}\hspace{15mm}(\text{change of slope by }\frac{1}{a^2}\text{ at } y=a).
\end{cases}
\]
Substituting $C_a = A_a \cdot (e^{2ca} - e^{2c})$ from the first equation into the second yields the condition
\begin{align*}
\frac{1}{a^2}&=A_a\cdot\left(-c(e^{2ca} - e^{2c})e^{-ca} - c(e^{ca}+e^{2c}e^{-ca}\right)= A_a \cdot (-2ce^{ca}).
\end{align*}
Rearranging, we have $A_a= -e^{-ca}/2ca^2$ such that
\begin{align*}
B_a &= -e^{2c}\cdot A_a = e^{2c-ca}/2ca^2\\
C_a &= A_a \cdot (e^{2ca} - e^{2c})= (e^{2c}e^{-ca}-e^{ca})/2ca^2.
\end{align*}
Thus, 
\[
u_a(y) = \begin{dcases} 
\frac{e^{2c}e^{-cy}-e^{cy}}{2ca^2}\cdot e^{-ca}\hspace{5mm}(\text{for }1<y<a)\\ 
 \frac{e^{2c}e^{-ca}-e^{ca}}{2ca^2}\cdot e^{-cy} \hspace{5mm}(\text{for }a<y).
\end{dcases}
\]
{Since $(c^2-\frac{\partial^2}{\partial y^2})u = f$ is solved by
\[
u(y)= \int_1^{\infty} a^2 \cdot u_a(y) f(a) da,
\]
the symmetry of $-\Delta+q$ with respect to the measure $dy/y^2$ implies that $a^2\cdot u_a(y)$ is symmetric in $y$ and $a$}; furthermore, the integral kernel for the resolvent is
\[
a^2\cdot u_a(y) = \begin{dcases} 
\frac{e^{2c}e^{-cy}-e^{cy}}{2c}\cdot e^{-ca}\hspace{5mm}(\text{for }1<y<a)\\ 
 \frac{e^{2c}e^{-ca}-e^{ca}}{2c}\cdot e^{-cy} \hspace{5mm}(\text{for }a<y).
\end{dcases}
\]
The resolvent being Hilbert-Schmidt is equivalent to 
\[
\int_1^{\infty}\int_1^{\infty}|a^2\cdot u_a(y)|^2\hspace{1mm} \frac{da}{a^2}\hspace{1mm}\frac{dy}{y^2} < \infty.
\]
By symmetry, it suffices to integrate over $1<y<a<\infty$, and 
\begin{align*}
\iint_{1<y<a}|a^2\cdot u_a(y)|^2\hspace{1mm} \frac{da}{a^2}\hspace{1mm}\frac{dy}{y^2}=\iint_{1<y<a}\frac{|e^{4c}e^{-2cy}- 2e^{2c}+e^{2cy}|\cdot e^{-2ca}}{4c^2}\hspace{1mm} \frac{da}{a^2}\hspace{1mm}\frac{dy}{y^2}\\
\ll \frac{1}{n^2+1} \iint_{1<y<a}(e^{4c}e^{-2cy}+ 2e^{2c}+e^{2cy})\cdot e^{-2ca}\hspace{1mm} \frac{da}{a^2}\hspace{1mm}\frac{dy}{y^2}\
\end{align*}
Replacing $y,a$ by $y+1,a+1$, the integral becomes
\begin{align*}
\iint_{0<y<a}(e^{-2cy}+ 2+e^{2cy})&\cdot e^{-2ca}\hspace{1mm} \frac{da}{(a+1)^2}\hspace{1mm}\frac{dy}{(y+1)^2}\ll \iint_{0<y<a} \frac{da}{(a+1)^2}\hspace{1mm}\frac{dy}{(y+1)^2}\\
&\ll \int_0^\infty \frac{da}{(a+1)^2}\cdot \int_{0}^\infty\frac{dy}{(y+1)^2} < \infty
\end{align*}
Since the $L^2$-norm of the $n^{th}$ component of the integral kernel is uniformly bounded by a constant multiple of $1/(n^2+1)$, the sum over $n\in \Z$ is finite. That is, the resolvent is Hilbert-Schmidt.
\end{proof}
{One should identify an analogous growth condition for $G=SL_r(\R)$. }

\subsection{The simplest automorphic Hamiltonian}
The previous discussion leads to the following results.

\begin{theorem}[S.]
Let $\Gamma\backslash G/K=SL_2(\Z)\backslash SL_2(\R)/SO(2,\R)$. For $\dirac$ the invariant Dirac operator and $E_{1}^*$ the zero-order term of the Laurent expansion of the simplest (degenerate) Eisenstein series $E_{s}$ at $s=1$ with eigenvalue $\lambda_s=2s(s-1)$, the Friedrichs extension of the Hamiltonian $S=-\Delta-(\dirac E_1^*)^2$ has purely discrete spectrum on $L^2(\Gamma\backslash G/K)$, ground state $f=e^{-E_1^*}$, and lowest eigenvalue {$\lambda_o={6/\pi}$}. Furthermore, $S$ {characterizes a nuclear Fr\'echet Schwartz space} $V^\infty=\lim_{n}V^n$, for $V^n$ the completion of $L^2(\Gamma\backslash G/K)$ with respect to the Sobolev norm
\[
|f|_{V^n}=\langle S^nf,f\rangle_{L^2}.
\]
\end{theorem}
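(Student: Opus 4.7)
The plan is to assemble the theorem from three ingredients: (i) the Laurent expansion of $E_s$ at $s=1$ supplied by Kronecker's first limit formula, (ii) the abstract ground-state theorem for $-\Delta-(\dirac\beta)^2$ proved earlier in this section, and (iii) the sufficient-growth results for compact and Hilbert--Schmidt resolvents from Section \ref{Growth}.

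First I would write the Laurent expansion $E_s = \frac{3/\pi}{s-1} + E_1^* + O(s-1)$, with residue $\pi/(2\Gamma(1)\zeta(2)) = 3/\pi$ read off from Theorem \ref{EpsteinMero}. Expanding both sides of $\Delta E_s = 2s(s-1) E_s$ around $s=1$ gives
\[
\Delta E_s \;=\; \tfrac{6}{\pi} \;+\; 2(s-1)\bigl(E_1^* + \tfrac{3}{\pi}\bigr) \;+\; O\bigl((s-1)^2\bigr),
\]
so matching the coefficient of $(s-1)^0$ yields $\Delta E_1^* = 6/\pi \in \R$. Since multiplication by the real-valued scalar function $E_1^*$ acts as a central scalar, the preceding ground-state theorem applies with $\beta = E_1^*$, producing ground state $f = e^{-E_1^*}$ with bottom eigenvalue $\Delta E_1^* = 6/\pi$. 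Rapid decay of $f$ in the cusp (see below) places it in the Friedrichs domain.

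Next I would verify the growth condition $q \gg y^2$ needed to invoke Theorem \ref{HSGrowth}. An orthogonal-basis computation in $\Cl(\mathfrak{p})$, in which off-diagonal Clifford products cancel because $E_1^*$ is scalar-valued (exactly as in the proof that $\dirac^2=-\Delta$), shows $(\dirac E_1^*)^2 = -|\nabla E_1^*|^2$ in the invariant metric, so $q = |\nabla E_1^*|^2 \geq 0$. Kronecker's first limit formula, as recalled in Section \ref{KLF}, gives
\[
E_1^* \;=\; \tfrac{6}{\pi}\bigl(\gamma_o - \log 2 - \tfrac{1}{2}\log y - 2\log|\eta(z)|\bigr) + \text{const},
\]
and $-2\log|\eta(z)| = \pi y/6 + O(e^{-2\pi y})$, so the constant term of $E_1^*$ in the cusp is $y + O(\log y)$ with exponentially decaying non-constant Fourier components. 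Since $|\nabla y|^2 = y^2$ in the hyperbolic metric, we obtain $q \sim y^2$ uniformly as $y \to \infty$, which together with smoothness of $q$ on the compact part of $\Gamma\backslash G/K$ delivers the required growth.

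Finally, Theorem \ref{HSGrowth} then yields a Hilbert--Schmidt resolvent for the Friedrichs extension $\tilde S$, hence purely discrete spectrum with eigenvalues $\lambda_i \to \infty$ satisfying $\sum_i \lambda_i^{-2} < \infty$; the ground state from step one is the eigenvector of smallest eigenvalue $6/\pi$. For the Schwartz-space claim, $\{u_i/\lambda_i^{n/2}\}$ is an orthonormal basis of $V^n$, so the transition $V^n \to V^{n-2}$ is multiplication by $\lambda_i^{-1}$, which is Hilbert--Schmidt exactly because $\sum_i \lambda_i^{-2} < \infty$. Therefore $V^\infty = \lim_n V^n$ is a countable projective limit of Hilbert spaces with Hilbert--Schmidt transition maps and so is nuclear Fr\'echet (Section \ref{Schwartz}), and Schwartz' kernel theorem for $V^\infty$ follows by Cartan--Eilenberg adjunction. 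The principal technical obstacle is the uniform asymptotic control of $E_1^*$ in the cusp needed in step two; all subsequent steps are formal once the abstract machinery of Sections \ref{Friedrichs}, \ref{Schwartz}, and \ref{Growth} is in place.
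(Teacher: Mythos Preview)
Your proof is correct and follows essentially the same architecture as the paper's: Laurent-expand $E_s$ at $s=1$ to compute $\Delta E_1^*=6/\pi$, invoke the abstract ground-state theorem with $\beta=E_1^*$, verify $q\gg y^2$, and appeal to Theorem \ref{HSGrowth} and the Hilbert--Schmidt mechanism of Section \ref{Schwartz}.

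The one genuine methodological difference is in the verification of the growth condition. You compute $q=|\nabla E_1^*|^2$ directly from the explicit Kronecker limit formula, using the product expansion of $\eta(z)$ to extract the cuspidal asymptotic $E_1^*\sim y$ and control the non-constant Fourier modes by hand. The paper instead argues structurally: since $\Delta E_1^*$ is constant, $E_1^*$ is $\mathfrak{z}$-finite of moderate growth, so the theory of the constant term (\cite{Gar18}*{Theorem 8.1.1}) reduces the question to the constant term alone; then $c_P(\dirac E_1^*)=\dirac(c_P E_1^*)$ is read off from the known $c_P E_s=y^s+c_s y^{1-s}$ without ever unpacking the $\eta$-function. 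Your route is more elementary and self-contained; the paper's route is cleaner in that it avoids any explicit analysis of the exponentially decaying Fourier tail and generalizes more readily to the $SL_r$ case treated next, where no closed-form analogue of $\log|\eta|$ is available.
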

\begin{proof}
For $x+iy\in\mathfrak{H}$ and $j, k$ the usual Hamiltonian quaternions, $\dirac=y(j\partial_x + k\partial_y)$ with
\[
\dirac^2=y^2\left(j^2\partial_x^2 +(jk+kj)\partial_x\partial_y + k^2\partial_y^2\right)=-y^2\left(\partial_x^2+\partial_y^2\right)=-\Delta.
\]
The Laurent expansion of $E_s$ about $s=1$ is
\[
E_s(z)=\frac{3/\pi}{(s-1)}+E_1^*(z)+O(s-1),
\]
with {$\displaystyle E_1^*(z)=\frac{6}{\pi}\left(\gamma-\log2 - \log\left(y^{1/2}|\eta(z)|^2\right)\right)$} given by Kronecker's first limit formula.

In what follows, we view $s\to E_s$ as a meromorphic {function-valued function} of $s$. By the (vector-valued) Laurent expansion of $s\to E_s$,
\begin{align*}
0&=\frac{d}{ds}[(\Delta-\lambda_s)(s-1)E_s]
= (\Delta-\lambda_s)\frac{d}{ds}[(s-1)E_s]- 2(2s-1)(s-1)E_s\\
&= (\Delta-\lambda_s)\frac{d}{ds}[(s-1)E_s]-2(2s-1)[3/\pi+(s-1)E_1^*+O(s-1)].
\end{align*}
Evaluating the expression at $s=1$ yields $\Delta E_1^*= {6}/\pi\in \R$.

We claim the confining potential $q=-(\dirac E_1^*)^2$ has sufficient growth (see Section \ref{Growth}). By Theorem \ref{HSGrowth}, we must show $q\gg y^2$ as $y\to +\infty$. Let $\mathfrak{g}$ be the Lie algebra of $G$ and $\mathfrak{z}$ the center of the universal enveloping algebra $U\mathfrak{g}$, generated by Casimir. Since $\Delta E_1^*\in \R$ is an eigenfunction of the Laplacian, $E_1^*$ and its derivatives are $\mathfrak{z}$-finite of moderate growth. By the theory of the constant term \cite{Gar18}*{Theorem 8.1.1}, it suffices to show $-c_P (\dirac E_1^*)^2 \gg y^2$, where
\[
\text{constant term of }f=c_Pf=\int_{(N\cap \Gamma)\backslash N}f(nx)dn.
\]
By left $G$-invariance and Gelfand-Pettis corollaries (see \cite{Gro53} and {\cite{Gar18}}), $\dirac$ commutes with evaluation of the constant term $c_P$ such that
\[
c_P(\dirac  E_s)=\dirac(c_PE_s)=\dirac\left(y^s+\frac{\xi(2s-1)}{\xi(2s)}\hspace{1mm}y^{1-s}\right)=k\left(sy^{s}+\frac{\xi(2s-1)(1-s)}{\xi(2s)}\hspace{1mm}y^{1-s}\right).
\]
Evaluating the Laurent expansion of $E_s$ at $s=1$ for large $y$,
\[
c_P(\dirac  E_1^*)=k\left(y-\frac{\text{Res}_{s=1}\xi(s)}{\xi(2)}\right)=k\left(y-\frac{3}{2\pi}\right)\gg ky,
\]
so
\[
-c_P(\dirac E_1^*)^2\gg -(ky)^2=y^2.
\]
Since $q=-(\dirac E_1^*)^2$ has sufficient growth, the Friedrichs extension of $S=-\Delta + q$ has purely discrete spectrum on $L^2(\modcurve)$, and for $V^n$ the completion of automorphic test functions with respect to the Sobolev norm
\[
|f|_{V^n}=\langle S^nf, f\rangle_{L^2},
\]
the projective limit $V^\infty=\lim_n V^n$ is nuclear Fr\'echet. $S$ also factors into mutually adjoint raising and lowering operators, $R=(\dirac -\dirac E_1^*)$ and $L=(\dirac +\dirac E_1^*)$: for $f$ in $L^2(\modcurve)$,
\[
 Sf=(\dirac -\dirac E_1^*)(\dirac + \dirac E_1^*)f + [\dirac E_1^*, \dirac]f=RLf + [\dirac E_1^*, \dirac]f.
\]
By Leibniz' rule, the commutator is $\Delta E_1^*=6/\pi$, and since $f=e^{-E_1^*}\in \ker L$,
\[
Sf = RLf+ [\dirac E_1^*, \dirac]f= RLf + \Delta E_1^*\hspace{1mm}f = \Delta E_1^* f.
\]
Thus, $f=e^{-E_1^*}$ is the ground state of $\tilde S$ with eigenvalue $\Delta E_1^*=6/\pi$.
\end{proof}

\begin{theorem}[S.]
Let $\Gamma\backslash G/K=SL_r(\Z)\backslash SL_r(\R)/SO(r,\R)$ and $P=P^{r-1,1}$ the standard maximal proper parabolic subgroup of $G$. For $\dirac$ the invariant Dirac operator, $\varphi^P_s$ the spherical vector in the principal series representation of $G$, $E^P_{s,\varphi}$ the corresponding degenerate Eisenstein series with eigenvalue $\lambda_s=r(r-1)s(s-1)$, and $E_{1}^*$ the zero-order term of the Laurent expansion of $E^P_{s,\varphi}$ at {$s=1$}, the Hamiltonian $S=-\Delta-(\dirac E_1^*)^2$ has ground state $f=e^{-E_1^*}$ and lowest eigenvalue 
\[
\displaystyle\lambda_o=\frac{r(r-1)\pi^{r/2}}{2\Gamma(r/2)\zeta(r)}.
\] 
Granting sufficient growth of the confining potential, the Friedrichs extension $\tilde S$ has purely discrete spectrum on $L^2(\Gamma\backslash G/K)$ and {characterizes a nuclear Fr\'echet Schwartz space} $V^\infty=\lim_{n}V^n$, for $V^n$ the completion of $L^2(\Gamma\backslash G/K)$ with respect to the Sobolev norm
\[
|f|_{V^n}=\langle S^nf,f\rangle_{L^2}.
\]
\end{theorem}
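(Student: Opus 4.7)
The plan is to reduce the statement to the general Hamiltonian theorem proved earlier in this section, combined with the residue computation in Theorem \ref{EpsteinMero}. The earlier theorem shows that whenever $\beta$ acts by central scalars and $\Delta\beta \in \R$, the Hamiltonian $-\Delta - (\dirac\beta)^2$ factors as $RL + [\dirac\beta,\dirac]$ with mutually adjoint raising and lowering operators $R = \dirac - \dirac\beta$ and $L = \dirac + \dirac\beta$, has ground state $e^{-\beta}$, and has bottom eigenvalue $\Delta\beta$. So the heart of the matter is verifying those two hypotheses for $\beta = E_1^*$ and identifying the numerical value of $\Delta E_1^*$.

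The first step is to observe that $E_1^*$ is a real-valued smooth automorphic function, hence central in $C\ell(\mathfrak{p})$, so the Clifford-centrality hypothesis is automatic. Next I would compute $\Delta E_1^*$ by the same Laurent-differentiation trick used in the $SL_2$ case. Starting from $(\Delta - \lambda_s)E^P_{s,\varphi}=0$ with $\lambda_s = r(r-1)s(s-1)$, I multiply by $(s-1)$ to clear the pole, expand
\[
(s-1)E^P_{s,\varphi}(g) = c_{-1} + (s-1)E_1^*(g) + O((s-1)^2),
\]
where Theorem \ref{EpsteinMero} supplies $c_{-1} = \pi^{r/2}/(2\Gamma(r/2)\zeta(r))$, and differentiate the identity $(\Delta - \lambda_s)[(s-1)E^P_{s,\varphi}]=0$ in $s$. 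At $s=1$ one has $\lambda_1 = 0$ and $\lambda'_1 = r(r-1)$, yielding
\[
\Delta E_1^* \;=\; r(r-1)\cdot c_{-1} \;=\; \frac{r(r-1)\pi^{r/2}}{2\Gamma(r/2)\zeta(r)} \;=\; \lambda_o \;\in\; \R.
\]
This simultaneously verifies $\Delta E_1^* \in \R$ and identifies the claimed bottom eigenvalue. Applying the general theorem, a short Clifford-coefficient Leibniz computation gives $\dirac e^{-E_1^*} = -(\dirac E_1^*)e^{-E_1^*}$, so $L e^{-E_1^*} = 0$, and then $S e^{-E_1^*} = [\dirac E_1^*,\dirac]e^{-E_1^*} = \Delta E_1^*\cdot e^{-E_1^*} = \lambda_o e^{-E_1^*}$, exhibiting the ground state. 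Non-negativity of $RL$ together with mutual adjointness of $R$ and $L$ confirms $\lambda_o$ is the bottom of the spectrum of $\tilde S$.

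For the spectral and functional-analytic claims, I would invoke the strategy recalled in Section \ref{Growth}: granting the hypothesized sufficient growth of $q = -(\dirac E_1^*)^2$ along the Siegel set, the Friedrichs extension $\tilde S$ has Hilbert-Schmidt resolvent on $L^2(\Gamma\backslash G/K)$, and therefore purely discrete spectrum, while the Sobolev-like tower $V^n$ acquires Hilbert-Schmidt transition maps. By the discussion in Section \ref{Schwartz}, this makes $V^\infty = \lim_n V^n$ nuclear Fr\'echet, so a Schwartz kernel theorem follows from the Cartan-Eilenberg adjunction.

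The main obstacle is precisely the sufficient-growth hypothesis the theorem grants: one must show that $-(\dirac E_1^*)^2$ grows fast enough along each cuspidal direction in $\Gamma\backslash SL_r(\R)/SO(r,\R)$ to render the resolvent Hilbert-Schmidt. In the $SL_2$ case this followed from the two-term constant-term expansion, which gave $c_P(\dirac E_1^*) = k(y - \tfrac{3}{2\pi})$ and hence $q \gg y^2$, matching the threshold identified in Theorem \ref{HSGrowth}. For $SL_r$ one needs an analogous bound along all $r-1$ simple-root directions, using the constant-term expansion of $E^P_{s,\varphi}$ along every standard parabolic, together with a higher-dimensional analogue of the integral-kernel estimate used to establish the $y^2$ threshold. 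Quantifying the correct growth rate as a function of $r$, and verifying it against the actual cuspidal behavior of the derivative $\dirac E_1^*$, is the step I expect to require the most care, and it is exactly the point at which the final paragraph of Section \ref{Growth} leaves off.
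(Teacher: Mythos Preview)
Your proposal is correct and follows essentially the same route as the paper's proof: both use the Laurent-differentiation trick on $(\Delta-\lambda_s)(s-1)E^P_{s,\varphi}=0$ to compute $\Delta E_1^*=r(r-1)\cdot\mathrm{Res}_{s=1}E^P_{s,\varphi}=\lambda_o$, then factor $S$ into raising and lowering operators to exhibit $e^{-E_1^*}$ as the ground state, and finally grant the sufficient-growth hypothesis to obtain discrete spectrum and nuclearity of $V^\infty$. Your write-up is slightly more explicit in invoking the earlier general Hamiltonian theorem as a black box, whereas the paper re-runs the raising/lowering factorization inline, but the content is the same.
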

\begin{proof}
By design, the proof is similar to the $SL_2(\R)$ case. The Laurent expansion of $E^P_s(g)$ about $s=1$ is 
\[
E^P_{s,\varphi}(g)=\frac{R}{(s-1)}+E_1^*(g)+O(s-1),
\]
with $\displaystyle R =\frac{\pi^{r/2}}{2\Gamma(r/2)\zeta(r)}$ the residue of $E_s^P(g)$ at $s=1$ and $E_{1}^{*}(g)$ given by a generalization of Kronecker's first limit formula.

In what follows, we view $s\to E^P_{s,\varphi}$ as a meromorphic {function-valued function} of $s$. By the (vector-valued) Laurent expansion of $s\to E^P_{s,\varphi}$, 
\begin{align*}
0&=\frac{d}{ds}[(\Delta-\lambda_s)\left(s-1\right)E^P_{s,\varphi}]
= (\Delta-\lambda_s)\frac{d}{ds}[\left(s-1\right)E^P_{s,\varphi}]- r(r-1)(2s-1)(s-1)E^P_{s,\varphi}\\
&= (\Delta-\lambda_s)\frac{d}{ds}[(s-1)E^P_{s,\varphi}]-r(r-1)(2s-1)[R+(s-1)E_1^*+O(s-1)].
\end{align*}
Evaluating the expression at $s=1$ yields $\Delta E_1^*= r(r-1)R=\lambda_o$.

Granting that $q=-(\dirac E_1^*)^2$ has sufficient growth, the Friedrichs extension of $S=-\Delta + q$ has purely discrete spectrum on $L^2(\modcurve)$ and for $V^n$ the completion of automorphic test functions with respect to the Sobolev norm
\[
|f|_{V^n}=\langle S^nf, f\rangle_{L^2},
\]
the projective limit $V^\infty=\lim_n V^n$ is nuclear Fr\'echet.

$S$ also factors into mutually adjoint raising and lowering operators, $R=(\dirac -\dirac E_1^*)$ and $L=(\dirac +\dirac E_1^*)$: for $f$ in $L^2(\modcurve)$,
\[
 Sf=(\dirac -\dirac E_1^*)(\dirac + \dirac E_1^*)f + [\dirac E_1^*, \dirac]f=RLf + [\dirac E_1^*, \dirac]f.
\]
By Leibniz' rule, the commutator is $\Delta E_1^*=\lambda_o$. Since $f=e^{-E_1^*}\in \ker L$,
\[
Sf = RLf+ [\dirac E_1^*, \dirac]f= RLf +\Delta E_1^*f = \Delta E_1^*f.
\]
That is, $f=e^{-E_1^*}$ is the ground state of $\tilde S$ with bottom eigenvalue $\displaystyle\lambda_o=\frac{r(r-1)\pi^{r/2}}{2\Gamma(r/2)\zeta(r)}$.
\end{proof}

\bibliographystyle{amsplain}

\begin{bibdiv}
\begin{biblist} 
\addcontentsline{toc}{chapter}{Bibliography}

\bib{Bac16}{article}{
author = {Backlund, R. J.},
journal = {Acta Math},
pages = {345--375},
publisher = {Institut Mittag-Leffler},
title = {Über die Nullstellen der Riemannschen Zetafunktion},
url = {https://doi.org/10.1007/BF02422950},
volume = {41},
year = {1916}
}

\bib{BG64}{article}{
   label={BatGro64},
   author={Bateman, P. T.},
   author={Grosswald, E.},
   title={On Epstein's zeta function},
   journal={Acta Arith.},
   volume={9},
   date={1964},
   pages={365--373},
}

\bib{BF61}{article}{
   author={Berezin, F. A.},
   author={Faddeev, L. D.},
   title={Remark on the Schr\"{o}dinger equation with singular potential},
   language={Russian},
   journal={Dokl. Akad. Nauk SSSR},
   volume={137},
   date={1961},
   pages={1011--1014},
}

\bib{BP35}{article}{
   author = {Bethe, H.},
   author = {Peierls, R.},
    title = {Quantum Theory of the Diplon},
  journal = {Proceedings of the Royal Society of London Series A},
     date = {1935},
   volume = {148},
    pages = {146--156},
}

\bib{BG20}{article}{
    label={BomGar20},
    author={Bombieri, E.},
    author={Garrett, P.},
    title={Designed pseudo-Laplacians},
    date={2020},
    status={preprint},
    eprint={http://www-users.math.umn.edu/~garrett/m/v/Bombieri-Garrett_current_version.pdf}
    
}

\bib{BT}{article}{
   author={Borel, A.},
   author={Tits, J.},
   title={Groupes r\'{e}ductifs},
   language={French},
   journal={Inst. Hautes \'{E}tudes Sci. Publ. Math.},
   number={27},
   date={1965},
   pages={55--150},
}
		
\bib{Bor66a}{article}{
   author={Borel, A.},
   title={Introduction to automorphic forms},
   conference={
      title={Algebraic Groups and Discontinuous Subgroups},
      address={Proc. Sympos. Pure Math., Boulder, Colo.},
      date={1965},
   },
   book={
      publisher={Amer. Math. Soc., Providence, R.I.},
   },
   date={1966},
   pages={199--210},
}
		
\bib{Bor66b}{article}{
   author={Borel, A.},
   title={Reduction theory for arithmetic groups},
   conference={
      title={Algebraic Groups and Discontinuous Subgroups},
      address={Proc. Sympos. Pure Math., Boulder, Colo.},
      date={1965},
   },
   book={
      publisher={Amer. Math. Soc., Providence, R.I.},
   },
   date={1966},
   pages={20--25},
}

\bib{Bou59}{book}{
   author={Bourbaki, N.},
   title={\'{E}l\'{e}ments de math\'{e}matique. Premi\`ere partie: Les structures
   fondamentales de l'analyse. Livre II: Alg\`ebre. Chapitre 9: Formes
   sesquilin\'{e}aires et formes quadratiques},
   language={French},
   series={Actualit\'{e}s Scientifiques et Industrielles [Current Scientific and
   Industrial Topics], No. 1272},
   publisher={Hermann, Paris},
   date={1959},
   pages={211 pp. (1 insert)},
}

\bib{Bou81}{book}{
   author={Bourbaki, N.},
   title={Espaces vectoriels topologiques. Chapitres 1 \`a 5},
   language={French},
   edition={New edition},
   note={\'{E}l\'{e}ments de math\'{e}matique.},
   publisher={Masson, Paris},
   date={1981},
   pages={vii+368},
   isbn={2-225-68410-3},
}
\bib{BC95}{article}{
   author={Bost, J.-B.},
   author={Connes, A.},
   title={Hecke algebras, type III factors and phase transitions with
   spontaneous symmetry breaking in number theory},
   journal={Selecta Math. (N.S.)},
   volume={1},
   date={1995},
   number={3},
   pages={411--457},
}

\bib{Bum97}{book}{
   author={Bump, D.},
   title={Automorphic forms and representations},
   series={Cambridge Studies in Advanced Mathematics},
   volume={55},
   publisher={Cambridge University Press, Cambridge},
   date={1997},
   pages={xiv+574},
}

\bib{BG84}{article}{
   label={BumGol84},
   author={Bump, D.},
   author={Goldfeld, D.},
   title={A Kronecker limit formula for cubic fields},
   conference={
      title={Modular forms},
      address={Durham},
      date={1983},
   },
   book={
      series={Ellis Horwood Ser. Math. Appl.: Statist. Oper. Res.},
      publisher={Horwood, Chichester},
   },
   date={1984},
   pages={43--49},
}

\bib{Car20}{article}{
   author={Carlson, F.},
   title={Ein Satz \"{u}ber Kegelschnitte mit einigen Anwendungen auf die
   perspektive Affinit\"{a}t},
   language={German},
   journal={Math. Z.},
   volume={8},
   date={1920},
   number={1-2},
   pages={138--154},
}

\bib{Cas78/80}{article}{
   author={Casselman, W.},
   title={Jacquet modules for real reductive groups},
   conference={
      title={Proceedings of the International Congress of Mathematicians
      (Helsinki, 1978)},
   },
   book={
      publisher={Acad. Sci. Fennica, Helsinki},
   },
   date={1980},
   pages={557--563},
}

\bib{Cas89}{article}{
   author={Casselman, W.},
   title={Introduction to the Schwartz space of $\Gamma\backslash G$},
   journal={Canad. J. Math.},
   volume={41},
   date={1989},
   number={2},
   pages={285--320},
}

\bib{CM82}{article}{
   author={Casselman, W.},
   author={Mili\v{c}i\'{c}, D.},
   title={Asymptotic behavior of matrix coefficients of admissible
   representations},
   journal={Duke Math. J.},
   volume={49},
   date={1982},
   number={4},
   pages={869--930},
}

\bib{Che54}{book}{
   author={Chevalley, Claude C.},
   title={The algebraic theory of spinors},
   publisher={Columbia University Press, New York},
   date={1954},
   pages={viii+131},
}

\bib{CS49}{article}{
   author={Chowla, S.},
   author={Selberg, A.},
   title={On Epstein's zeta function. I},
   journal={Proc. Nat. Acad. Sci. U.S.A.},
   volume={35},
   date={1949},
   pages={371--374},
}

\bib{CdV81}{article}{
   author={Colin de Verdi\`ere, Y.},
   title={Une nouvelle d\'emonstration du prolongement m\'eromorphe des s\'eries
   d'Eisenstein},
   journal={C. R. Acad. Sci. Paris S\'er. I Math.},
   volume={293},
   date={1981},
   number={7},
   pages={361--363},
}

\bib{CdV82}{article}{
   label={CdV82/83},
   author={Colin de Verdi\`ere, Y.},
   title={Pseudo-laplaciens I,II},
   language={French},
   journal={Ann. Inst. Fourier (Grenoble)},
   partial={
        volume={32},
        date={1982},
        number={3},
        pages={xiii, 275--286},
        issn={0373-0956}
    },
    partial={
        volume={33},
        date={1983},
        number={2},
        pages={87--113},
        issn={0373-0956}
    }
}

\bib{DH36}{article}{
   author={Davenport, H.},
   author={Heilbronn, H.},
   title={On the Zeros of Certain Dirichlet Series},
   journal={J. London Math. Soc.},
   volume={11},
   date={1936},
   partial={
   number={3},
   pages={181--185},
   },
   partial={
   number={4},
   pages={307--312},
   },
}

\bib{DeC12}{article}{
   author={DeCelles, A. T.},
   title={An exact formula relating lattice points in symmetric spaces to
   the automorphic spectrum},
   journal={Illinois J. Math.},
   volume={56},
   date={2012},
   number={3},
   pages={805--823},
}

\bib{DeC16}{article}{
   author={DeCelles, A. T.},
   title={Constructing Poincar\'{e} series for number theoretic applications},
   journal={New York J. Math.},
   volume={22},
   date={2016},
   pages={1221--1247},
}

\bib{DeC21}{article}{
   author={DeCelles, Amy T.},
   title={Global automorphic Sobolev theory and the automorphic heat kernel},
   journal={Illinois J. Math.},
   volume={65},
   date={2021},
   number={2},
   pages={261--286}
}

\bib{Dir28}{article}{
   label={Dir28},
   author={Dirac, P. A. M.},
   title={The Quantum Theory of the Electron},
   publisher={Proc. R. Soc. Lond. A},
   partial={
   volume={117},
   pages={610--624},
   date={1928}
   },
   partial={
   volume={118},
   pages={351--361},
   date={1928},
   }
}

\bib{Dir30}{article}{
   author={Dirac, P. A. M.},
   title={The Principles of Quantum Mechanics},
   publisher={Oxford, at the Clarendon Press},
   date={1930},
   pages={xii+311}
}

\bib{DIT18}{article}{
   author={Duke, W.},
   author={Imamo\={g}lu, \"{O}.},
   author={T\'{o}th, \'{A}.},
   title={Kronecker's first limit formula, revisited},
   journal={Res. Math. Sci.},
   volume={5},
   date={2018},
   number={2},
   pages={Paper No. 20, 21},
}

\bib{Efr92}{article}{
   author={Efrat, I.},
   title={On a ${\rm GL}(3)$ analog of $|\eta(z)|$},
   journal={J. Number Theory},
   volume={40},
   date={1992},
   number={2},
   pages={174--186},
}

\bib{Els73}{article}{
   label={Els73},
   author={Elstrodt, J.},
   title={Die Resolvente zum Eigenwertproblem der automorphen Formen in der
   hyperbolischen Ebene. I,II},
   language={German},
   partial={
        date={1973},
        volume={203},
        journal={Math. Ann.},
        pages={295--300}
    },
    partial={
       date={1973},
       volume={132},
       journal={Math. Z.},
       pages={99--134},
    }
}

\bib{ES92}{book}{
   author={Enock, M.},
   author={Schwartz, J.-M.},
   title={Kac algebras and duality of locally compact groups},
   note={With a preface by Alain Connes;
   With a postface by Adrian Ocneanu},
   publisher={Springer-Verlag, Berlin},
   date={1992},
   pages={x+257},
 }

\bib{Eps03}{article}{ 
   label={Eps03/06},
   author={Epstein, P.},
   title={Zur Theorie allgemeiner Zetafunctionen. I,II},
   language={German},
   journal={Math. Ann.},   
   partial={
	volume={56},
   	date={1903},
   	number={4},
   	pages={615--644},
    },
    partial={
	volume={63},
   	date={1906},
   	number={2},
   	pages={205--216},
    }
}
 
 \bib{Fad67}{article}{
   author={Faddeev, L. D.},
   title={The eigenfunction expansion of Laplace's operator on the
   fundamental domain of a discrete group on the Loba\v{c}evski\u{\i} plane},
   language={Russian},
   journal={Trudy Moskov. Mat. Ob\v{s}\v{c}.},
   volume={17},
   date={1967},
   pages={323--350},
   issn={0134-8663},
}

\bib{Fri87}{article}{
   author={Friedberg, S.},
   title={A global approach to the Rankin-Selberg convolution for ${\rm
   GL}(3,{\Z})$},
   journal={Trans. Amer. Math. Soc.},
   volume={300},
   date={1987},
   number={1},
   pages={159--174},
}

\bib{Fri34}{article}{
   label={Fri34},
   author={Friedrichs, K.},
   title={Spektraltheorie halbbeschr\"ankter Operatoren},
   language={German},
   journal={Math. Ann.},
   partial={
    volume={109},
    date={1934},
    number={1},
    pages={465--487, 685--713},
    },
    partial={
    volume={110},
    date={1935},
    number={1},
    pages={777--779},
    }
}

\bib{Fay77}{article}{
   author={Fay, J. D.},
   title={Fourier coefficients of the resolvent for a Fuchsian group},
   journal={J. Reine Angew. Math.},
   volume={293/294},
   date={1977},
   pages={143--203},
}

\bib{FHM67}{article}{
   author={Fox, L.},
   author={Henrici, P.},
   author={Moler, C.},
   title={Approximations and bounds for eigenvalues of elliptic operators},
   journal={SIAM J. Numer. Anal.},
   volume={4},
   date={1967},
   pages={89--102},
}

\bib{Gar09}{article}{
    author={Garrett, P.},
    title={Standard compact periods for Eisenstein series},
    date={2009},
    eprint={http://www-users.math.umn.edu/~garrett/m/v/eis_std_periods.pdf}
}

\bib{Gar10}{article}{
    author={Garrett, P.},
    title={Non-existence of tensor products of Hilbert spaces},
    date={2010},
    eprint={http://www-users.math.umn.edu/~garrett/m/v/nonexistence_tensors.pdf}
}
\bib{Gar13}{article}{
    author={Garrett, P.},
    title={Simplest automorphic Schr\"odinger operators},
    date={2013},
    eprint={http://www-users.math.umn.edu/~garrett/m/v/simplest_afc_schrodinger.pdf}
}

\bib{Gar14}{article}{
    author={Garrett, P.},
    title={Waveforms, I},
    date={2014},
    eprint={http://www.math.umn.edu/~garrett/m/mfms/notes2013-14/11waveformsI.pdf},
}

\bib{Gar15}{article}{
    author={Garrett, P.},
    title={Riemann and $\zeta(s)$},
    date={2015},
    eprint={http://www-users.math.umn.edu/~garrett/m/complex/notes_2014-15/09c_Riemann_and_zeta.pdf}
}

\bib{Gar17a}{article}{
    author={Garrett, P.},
    title={Generalized functions (distributions) on circles},
    date={2017},
    eprint={http://www-users.math.umn.edu/~garrett/m/real/notes_2017-18/12_Levi-Sobolev.pdf}
}

\bib{Gar17b}{article}{
    author={Garrett, P.},
    title={Harish-Chandra’s homomorphism, Verma modules},
    date={2017},
    eprint={http://www-users.math.umn.edu/~garrett/m/lie/hc_isomorphism.pdf}
}

\bib{Gar18}{book}{
    author={Garrett, P.},
    title={Modern analysis of automorphic forms by examples},
    date={2018}, 
    volume={I,II},
    publisher={Cambridge Univ. Press},
    eprint={http://www-users.math.umn.edu/~garrett/m/v/Book_31Oct2018.pdf}
}

\bib{Gar20}{article}{
    author={Garrett, P.},
    title={Schwartz kernel theorems, tensor products, nuclearity},
    date={2020},
    eprint={http://www-users.math.umn.edu/~garrett/m/real/notes_2019-20/10a_Schwartz_kernel_theorem.pdf}
}

\bib{Gel36}{article}{
    author={Gelfand, I. M.},
    title={Sur un lemme de la th\`eorie des espaces lin\`eaires}, 
    language={French},
    journal={Comm. Inst. Sci. Math. de Kharkoff, no. 4,},
    volume={13},
    date={1936},
    pages={35--40},
}

\bib{GV64}{book}{
   author={Gelfand, I. M.},
   author={Vilenkin, N. Ya.},
   title={Generalized functions. Vol. 4: Applications of harmonic analysis},
   series={Translated by Amiel Feinstein},
   publisher={Academic Press, New York - London},
   date={1964},
   pages={xiv+384},
}

\bib{Gol15}{book}{
   label={Gol06/15},
   author={Goldfeld, D.},
   title={Automorphic forms and L-functions for the group ${\rm GL}(n,\R)$},
   series={Cambridge Studies in Advanced Mathematics},
   volume={99},
   note={With an appendix by Kevin A. Broughan;
   Paperback edition of the 2006 original},
   publisher={Cambridge University Press, Cambridge},
   date={2015},
   pages={xiii+504},
}

\bib{GH85}{article}{
   author={Goldfeld, D.},
   author={Hoffstein, J.},
   title={Eisenstein series of ${\frac{1}{2}}$-integral weight and the mean
   value of real Dirichlet $L$-series},
   journal={Invent. Math.},
   volume={80},
   date={1985},
   number={2},
   pages={185--208},
}

 \bib{God66}{article}{
   author={Godement, R.},
   title={The decomposition of $L^{2}\,(G/\Gamma )$ for $\Gamma ={\rm
   SL}(2,\Z)$},
   conference={
      title={Algebraic Groups and Discontinuous Subgroups (Proc. Sympos.
      Pure Math., Boulder, Colo.)},
   },
   book={
      publisher={Amer. Math. Soc., Providence, R.I.},
   },
   date={1966},
   pages={211--224}}

\bib{God64}{article}{
   label={God62/63},	
   author={Godement, R.},
   title={Domaines fondamentaux des groupes arithm\'{e}tiques},
   language={French},
   conference={
      title={S\'{e}minaire Bourbaki, 1962/63. Fasc. 3, No. 257},
   },
   book={
      publisher={Secr\'{e}tariat math\'{e}matique, Paris},
   },
   date={1964},
   pages={25},
}

\bib{Gre60}{article}{
   author={Green, T. A.},
   author={Lanford, O. E., III},
   title={Rigorous derivation of the phase shift formula for the Hilbert
   space scattering operator of a single particle},
   journal={J. Mathematical Phys.},
   volume={1},
   date={1960},
   pages={139--148},
}

\bib{Gro53}{article}{
   label={Gro53},
   author={Grothendieck, A.},
   title={Sur certains espaces de fonctions holomorphes I,II},
   language={French},
   journal={J. Reine Angew. Math.},
   partial={
     volume={192},
   date={1953},
   pages={35--64},
   },
   partial={
     volume={192},
   date={1953},
   pages={77--95},
   }
}
	
\bib{Gro55}{article}{
   author={Grothendieck, A.},
   title={Produits tensoriels topologiques et espaces nucl\'{e}aires},
   language={French},
   journal={Mem. Amer. Math. Soc.},
   volume={No. 16},
   date={1955},
   pages={140},
   issn={0065-9266}
   }
   
\bib{Ha77}{article}{
    author={Haas, H.},
    title={Numerische Berechnung der Eigenwerte der Differentialgleichung $y^2\Delta u+\lambda u=0$ f\"ur ein unendliches Gebiet im $\mathbb{R}^2$},
    journal={ Diplomarbeit, Universit\"at Heidelberg, Institut f\"ur Angewandte Mathematik},
    date={1977},
    pages={155}
}

\bib{HL16}{article}{
   author={Hardy, G. H.},
   author={Littlewood, J. E.},
   title={Contributions to the theory of the Riemann zeta-function and the
   theory of the distribution of primes},
   journal={Acta Math.},
   volume={41},
   date={1916},
   number={1},
   pages={119--196},
} 

\bib{HC51}{article}{
   author={Harish-Chandra},
   title={On some applications of the universal enveloping algebra of a
   semisimple Lie algebra},
   journal={Trans. Amer. Math. Soc.},
   volume={70},
   date={1951},
   pages={28--96},
}

\bib{HC54}{article}{
   author={Harish-Chandra},
   title={Representations of semisimple Lie groups. III},
   journal={Trans. Amer. Math. Soc.},
   volume={76},
   date={1954},
   pages={234--253},
}

\bib{HC66}{article}{
   author={Harish-Chandra},
   title={Discrete series for semisimple Lie groups. II. Explicit
   determination of the characters},
   journal={Acta Math.},
   volume={116},
   date={1966},
   pages={1--111},
}

\bib{HC68}{book}{
   author={Harish-Chandra},
   title={Automorphic forms on semisimple Lie groups},
   series={Notes by J. G. M. Mars. Lecture Notes in Mathematics. Vol. 62},
   publisher={Springer-Verlag, Berlin-New York},
   date={1968},
   pages={x+138},
}

\bib{Hec17a}{article}{
   author={Hecke, E.},
   title={\"{U}ber die Zetafunktion beliebiger algebraischer Zahlk\"{o}rper},
   language={German},
   journal={Nachrichten von der K\"oniglichen Gesellschaft der Wissenschaften zu G\"ottingen, Mathematisch-Physikalische Klasse},
   date={1917},
   note={Appears in Mathematische Werke, 159--171},
   pages={77--89},
}

\bib{Hec17b}{article}{
   author={Hecke, E.},
   title={\"{U}ber die Kroneckersche Grenzformel f\"ur reelle quadratische K\"orper und die und die Klassenzhal relativ-Abelscher K\"orper},
   language={German},
   journal={Verhandlungen der Naturforschenden Gesellschaft in Basel},
   volume={Bd. 28},
   date={1917},
   note={Appears in Mathematische Werke 10, 198--207},
   pages={363--372},
}

\bib{Hec59}{book}{
   author={Hecke, E.},
   title={Mathematische Werke},
   language={German},
   series={Herausgegeben im Auftrage der Akademie der Wissenschaften zu
   G\"{o}ttingen},
   publisher={Vandenhoeck \& Ruprecht, G\"{o}ttingen},
   date={1959},
   pages={955 pp. (1 plate)},
}

\bib{Hej76}{book}{
   author={Hejhal, D. A.},
   title={The Selberg trace formula for ${\rm PSL}(2,\R)$. Vol. I},
   series={Lecture Notes in Mathematics, Vol. 548},
   publisher={Springer-Verlag, Berlin-New York},
   date={1976},
   pages={vi+516},
}

\bib{Hej83}{book}{
   author={Hejhal, D. A.},
   title={The Selberg trace formula for ${\rm PSL}(2,\R)$. Vol. 2},
   series={Lecture Notes in Mathematics},
   volume={1001},
   publisher={Springer-Verlag, Berlin},
   date={1983},
}

\bib{Hej81}{article}{
author={Hejhal, D. A.},
title={Some observations concerning eigenvalues of the Laplacian and Dirichlet L-series},
journal={Recent Progress in Analytic Number Theory},
publisher={Academic Press},
year={1981},
volume={2},
pages={95--110},
}

\bib{Hej90}{article}{
   author={Hejhal, D. A.},
   title={On a result of G. P\'{o}lya concerning the Riemann $\xi$-function},
   journal={J. Analyse Math.},
   volume={55},
   date={1990},
   pages={59--95},
}

\bib{Hub55}{article}{
   label={Hub55/56},
   author={Huber, H.},
   title={\"{U}ber eine neue Klasse automorpher Funktionen und ein
   Gitterpunktproblem in der hyperbolischen Ebene. I},
   language={German},
   journal={Comment. Math. Helv.},
   volume={30},
   date={1956},
   pages={20--62 (1955)},
}

\bib{Iwa02}{book}{
   author={Iwaniec, H.},
   title={Spectral methods of automorphic forms},
   series={Graduate Studies in Mathematics},
   volume={53},
   edition={2},
   publisher={American Mathematical Society, Providence, RI; Revista
   Matem\'{a}tica Iberoamericana, Madrid},
   date={2002},
   pages={xii+220},
}

\bib{Jac82}{article}{
   label={Jac82/83},
   author={Jacquet, H.},
   title={On the residual spectrum of ${\rm GL}(n)$},
   conference={
      title={Lie group representations, II},
      address={College Park, Md.},
      date={1982/1983},
   },
   book={
      series={Lecture Notes in Math.},
      volume={1041},
      publisher={Springer, Berlin},
   },
   date={1984},
   pages={185--208},
}

\bib{Kac90}{book}{
   author={Kac, V. G.},
   title={Infinite-dimensional Lie algebras},
   edition={3},
   publisher={Cambridge University Press, Cambridge},
   date={1990},
   pages={xxii+400},
}

\bib{Kat51}{article}{
   author={Kato, T.},
   title={Fundamental properties of Hamiltonian operators of Schr\"{o}dinger
   type},
   journal={Trans. Amer. Math. Soc.},
   volume={70},
   date={1951},
   pages={195--211},
}

\bib{Kat66}{book}{
   author={Kato, T.},
   title={Perturbation theory for linear operators},
   series={Die Grundlehren der mathematischen Wissenschaften, Band 132},
   publisher={Springer-Verlag New York, Inc., New York},
   date={1966},
   pages={xix+592},
}

\bib{Kat72}{article}{
   label={Kat72/73},
   author={Kato, Tosio},
   title={Schr\"{o}dinger operators with singular potentials},
   journal={Israel J. Math.},
   volume={13},
   date={1972},
   pages={135--148},
}

\bib{Kro68}{book}{
   label={Kro68},
   author={Kronecker, L.},
   title={Leopold Kronecker's Werke. B\"{a}nde IV},
   language={German},
   series={Herausgegeben auf Veranlassung der K\"{o}niglich Preussischen
   Akademie der Wissenschaften von K. Hensel},
   publisher={Chelsea Publishing Co., New York},
   date={1895},
   note={Reprint of the 1929 original of his collected works},
   pages={x+508 pp},
}

\bib{Lan71}{book}{
   author={Langlands, R. P.},
   title={Euler products},
   note={A James K. Whittemore Lecture in Mathematics given at Yale
   University, 1967;
   Yale Mathematical Monographs, 1},
   publisher={Yale University Press, New Haven, Conn.-London},
   date={1971},
   pages={v+53},
}

\bib{Lan76}{book}{
   label={Lan67/76},
   author={Langlands, R. P.},
   title={On the functional equations satisfied by Eisenstein series},
   series={Lecture Notes in Mathematics, Vol. 544},
   publisher={Springer-Verlag, Berlin-New York},
   date={1976},
   pages={v+337},
}

\bib{LM15}{article}{
   author={Liu, S.-C.},
   author={Masri, R.},
   title={A Kronecker limit formula for totally real fields and arithmetic
   applications},
   journal={Res. Number Theory},
   volume={1},
   date={2015},
   pages={Paper No. 8, 20},
}

\bib{LP76}{book}{
   author={Lax, P.},
   author={Phillips, R.},
   title={Scattering theory for automorphic functions},
   note={Annals of Mathematics Studies, No. 87},
   publisher={Princeton Univ. Press, Princeton, N.J.},
   date={1976},
   pages={204-206}
   }

\bib{MW89}{article}{
    AUTHOR = {M\oe glin, C.},
    author = {Waldspurger, J.-L.},
     TITLE = {Le spectre r\'{e}siduel de {${\rm GL}(n)$}},
   JOURNAL = {Ann. Sci. \'{E}cole Norm. Sup. (4)},
    VOLUME = {22},
      YEAR = {1989},
    NUMBER = {4},
     PAGES = {605--674},
}

\bib{MW95}{book}{
    AUTHOR = {M\oe glin, C.},
    author = {Waldspurger, J.-L.},
     TITLE = {Spectral decomposition and {E}isenstein series},
    SERIES = {Cambridge Tracts in Mathematics},
    VOLUME = {113},
 PUBLISHER = {Cambridge University Press, Cambridge},
      YEAR = {1995},
     PAGES = {xxviii+338},
}   
   
\bib{Mon73}{article}{
   author={Montgomery, H. L.},
   title={The pair correlation of zeros of the zeta function},
   conference={
      title={Analytic number theory},
      address={Proc. Sympos. Pure Math., Vol. XXIV, St. Louis Univ., St.
      Louis, Mo.},
      date={1972},
   },
   book={
      publisher={Amer. Math. Soc., Providence, R.I.},
   },
   date={1973},
   pages={181--193},
}

\bib{Mu96}{article}{
   author={M\"{u}ller, W.},
   title={On the analytic continuation of rank one Eisenstein series},
   journal={Geom. Funct. Anal.},
   volume={6},
   date={1996},
   number={3},
   pages={572--586},
   }

\bib{vN30}{article}{
   author={v. Neumann, J.},
   title={Allgemeine Eigenwerttheorie Hermitescher Funktionaloperatoren},
   language={German},
   journal={Math. Ann.},
   volume={102},
   date={1930},
   number={1},
   pages={49--131},
   issn={0025-5831},
} 
   
\bib{Neu73}{article}{
   author={Neunh\"{o}ffer, H.},
   title={\"{U}ber die analytische Fortsetzung von Poincar\'{e}reihen},
   language={German},
   journal={S.-B. Heidelberger Akad. Wiss. Math.-Natur. Kl.},
   date={1973},
   pages={33--90},
}

\bib{Nie73}{article}{
   author={Niebur, D.},
   title={A class of nonanalytic automorphic functions},
   journal={Nagoya Math. J.},
   volume={52},
   date={1973},
   pages={133--145},
}

\bib{Odl}{article}{
    author={Odlyzko, A. M,},
    title={Correspondence about the origins of the Hilbert-Polya Conjecture},
    eprint={http://www.dtc.umn.edu/~odlyzko/polya/}
}

\bib{Ost16}{article}{
   author={Ostrowski, A.},
   title={\"{U}ber einige L\"{o}sungen der Funktionalgleichung
   $\psi(x)\cdot\psi(x)=\psi(xy)$},
   language={German},
   journal={Acta Math.},
   volume={41},
   date={1916},
   number={1},
   pages={271--284},
}
   
\bib{PF}{article}{
    author={Pavlov, B.},
    author={Faddeev, L.},
    title={Scattering theory and automorphic functions},
    journal={Zap. Nau\v cn. Sem. Leningrad. Otdel. Mat. Inst. Steklov (LOMI)},
    volume={27},
    date={1972},
    pages={161--193},
}

\bib{Pet38}{article}{
   author={Pettis, B. J.},
   title={On integration in vector spaces},
   journal={Trans. Amer. Math. Soc.},
   volume={44},
   date={1938},
   number={2},
   pages={277--304},
   issn={0002-9947},
}

\bib{Pie72}{book}{
   author={Pietsch, A.},
   title={Nuclear locally convex spaces},
   note={Translated from the second German edition by William H. Ruckle;
   Ergebnisse der Mathematik und ihrer Grenzgebiete, Band 66},
   publisher={Springer-Verlag, New York-Heidelberg},
   date={1972},
   pages={ix+193},
}
	
\bib{PT35}{article}{
author = {Potter, H. S. A.},
author = {Titchmarsh, E. C.},
title = {The Zeros of Epstein's Zeta-Functions},
journal = {Proceedings of the London Mathematical Society},
volume = {s2-39},
number = {1},
pages = {372--384},
date={1935}
}

\bib{Ran39}{article}{
   author={Rankin, R. A.},
   title={Contributions to the theory of Ramanujan's function $\tau(n)$ and
   similar arithmetical functions},
   journal={Proc. Cambridge Philos. Soc.},
   volume={35},
   date={1939},
   pages={351--372}
   }
 
 \bib{Ray94}{book}{
 title={The Theory of Sound},
 author={Rayleigh, J. W. S.},
 volume={I},
 date={1894},
publisher={London: Macmillan and Co.}
 }
   
\bib{Rie59}{article}{
   author={Riemann, B.},
   title={Uber die Anzahl der Primzahlen unter einer gegebenen Gr\"{o}sse},
   journal={Monats. Akad.},
   date={1859},
   pages={671--680},
   language={German},
   eprint={https://www.maths.tcd.ie/pub/HistMath/People/Riemann/Zeta/Zeta.pdf}
   }

\bib{Roe56a}{article}{
   label={Roe56a},
   author={Roelcke, W.},
   title={\"{U}ber die Wellengleichung bei Grenzkreisgruppen erster Art},
   language={German},
   journal={S.-B. Heidelberger Akad. Wiss. Math.-Nat. Kl.},
   volume={1953/1955},
   date={1953/1955},
   pages={159--267 (1956)},
}

\bib{Roe56b}{article}{ 
   label={Roe56b},
   author={Roelcke, W.},
   title={Analytische Fortsetzung der Eisensteinreihen zu den parabolischen
   Spitzen von Grenzkreisgruppen erster Art},
   language={German},
   journal={Math. Ann.},
   volume={132},
   date={1956},
   pages={121--129},
}

\bib{Rud91}{book}{
   author={Rudin, W.},
   title={Functional analysis},
   series={International Series in Pure and Applied Mathematics},
   edition={2},
   publisher={McGraw-Hill, Inc., New York},
   date={1991},
   pages={xviii+424},
}

\bib{Sch28}{book}{
  title={Collected Papers on Wave Mechanics},
  author={Schr{\"o}dinger, E.},
  year={1928},
  publisher={London and Glasgow}
}

\bib{Sch50}{article}{
   label={Sch50},
   author={Schwartz, L.},
   title={Th\'{e}orie des noyaux},
   language={French},
   conference={
      title={Proceedings of the International Congress of Mathematicians,
      Cambridge, Mass., vol.1},
   },
   date={1950},
   pages={220--230},
}

\bib{Sch50b}{book}{
   label={Sch50b},
   author={Schwartz, L.},
   title={Th\'{e}orie des distributions I,II},
   language={French},
   publisher={Hermann \& Cie., Paris},
   date={1950/51},
   series={Actualit\'{e}s Sci. Ind.},
}

\bib{Sch53}{article}{
author = {Schwartz, L.},
journal = {S\'eminaire Schwartz},
language = {French},
pages = {1-3},
publisher = {Secr\'etariat math\'ematique},
title = {Produit tensoriel topologique d'espaces vectoriels topologiques},
volume = {1},
year = {1953/54},
}

\bib{Sch54}{article}{
   author={Schwartz, L.},
   title={Espaces de fonctions diff\'{e}rentiables \`a valeurs vectorielles},
   language={French},
   journal={J. Analyse Math.},
   volume={4},
   date={1954/55},
   pages={88--148},
   issn={0021-7670},
}

\bib{Sch61}{book}{
   author={Schwartz, L.},
   title={M\'{e}thodes math\'{e}matiques pour les sciences physiques},
   language={French},
   series={Enseignement des Sciences},
   publisher={Hermann, Paris},
   date={1961},
   pages={392},
 }

 \bib{Sel54}{article}{
   label={Sel54/89},
   author={Selberg, A.},
   title={Harmonic Analysis, 2.},
   conference={
      address={Vorlesung Niederschrift, G\"ottingen},
      date={1954},},
   book={
   	title={Collected papers},
	volume={I},
   	publisher={Springer-Verlag, Berlin},
   },
  	date={1989},
   	pages={626--674}}
 
 \bib{Sel56}{article}{
   author={Selberg, A.},
   title={Harmonic analysis and discontinuous groups in weakly symmetric
   Riemannian spaces with applications to Dirichlet series},
   journal={J. Indian Math. Soc. (N.S.)},
   volume={20},
   date={1956},
   pages={47--87},
}

\bib{Sha78}{article}{
   author={Shahidi, F.},
   title={Functional equation satisfied by certain $L$-functions},
   journal={Compositio Math.},
   volume={37},
   date={1978},
   number={2},
   pages={171--207},
}

\bib{Sie61}{book}{
   author={Siegel, C. L.},
   title={Advanced analytic number theory},
   series={Tata Institute of Fundamental Research Studies in Mathematics},
   volume={9},
   edition={2},
   publisher={Tata Institute of Fundamental Research, Bombay},
   date={1961},
   pages={v+268},
}

\bib{Sim71}{book}{
   author={Simon, Barry},
   title={Quantum mechanics for Hamiltonians defined as quadratic forms},
   note={Princeton Series in Physics},
   publisher={Princeton University Press, Princeton, N. J.},
   date={1971},
   pages={xv+244},
}

\bib{Sim73}{article}{
   author={Simon, B.},
   title={Essential self-adjointness of Schr\"{o}dinger operators with positive
   potentials},
   journal={Math. Ann.},
   volume={201},
   date={1973},
   pages={211--220},
}

\bib{Sim73b}{article}{
   author={Simon, B.},
   title={Essential self-adjointness of Schr\"{o}dinger operators with singular potentials},
   journal={Arch. Rational Mech. Anal.},
   volume={52},
   date={1973},
   pages={44--48},
}

\bib{Sob37}{article}{
author={Sobolev, S. L.},
title={Sur un probl\`eme limite pour les \'equations polyharmoniques},
journal={Rec. Math. [Mat. Sbornik] N.S.},
date={1937},
volume={2(44)},
pages={465--499},
language={Russian}
}

\bib{Sob38}{article}{
author={Sobolev, S. L.},
title={Sur un th\'eor\`eme d'analyse fonctionnelle},
journal={Rec. Math. [Mat. Sbornik] N.S.},
language={Russian},
date={1938},
volume={4(46)},
pages={471--497},
}

\bib{Sob50}{book}{
   author={Sobolev, S. L.},
   title={Nekotorye primeneniya funkcional\cprime nogo analiza v matemati\v{c}esko\u{i}
   fizike},
   language={Russian},
   publisher={Izdat. Leningrad. Gos. Univ., Leningrad},
   date={1950},
   pages={255},
}

\bib{Spr94}{book}{
   author={Springer, T. A.},
   title={Reduction theory over global fields},
   language={English},
   publisher={Proceedings of the Indian Academy of Sciences},
   date={1994},
   pages={207--216},
   eprint={https://www.ias.ac.in/article/fulltext/pmsc/104/01/0207-0216}
}

\bib{Sta67}{article}{
title={On the zeros of Epstein's zeta function}, 
volume={14}, 
number={1}, 
journal={Mathematika}, 
publisher={London Mathematical Society}, 
author={Stark, H. M.}, 
year={1967}, pages={47--55}
}

\bib{SW71}{book}{
   author={Stein, E. M.},
   author={Weiss, G.},
   title={Introduction to Fourier analysis on Euclidean spaces},
   note={Princeton Mathematical Series, No. 32},
   publisher={Princeton University Press, Princeton, N.J.},
   date={1971},
   pages={x+297},
}

\bib{Sto32}{book}{
  title={Linear transformations in Hilbert space and their applications to analysis},
  author={Stone, M. H.},
  volume={15},
  year={1932},
  publisher={American Mathematical Soc.}
}

\bib{Tem11}{article}{
   author={Templier, N.},
   title={Heegner points and Eisenstein series},
   journal={Forum Math.},
   volume={23},
   date={2011},
   number={6},
   pages={1135--1158},
}

\bib{Ter73}{article}{
   author={Terras, A. A.},
   title={Bessel series expansions of the Epstein zeta function and the
   functional equation},
   journal={Trans. Amer. Math. Soc.},
   volume={183},
   date={1973},
   pages={477--486},
}

\bib{Tho35}{article}{
   author = {Thomas, L. H.},
    title = {The Interaction Between a Neutron and a Proton and the Structure of H$^{3}$},
  journal = {Physical Review},
   year = {1935},
   volume = {47},
  pages = {903-909},
}

\bib{Tit86}{book}{
   author={Titchmarsh, E. C.},
   title={The theory of the Riemann zeta-function},
   edition={2},
   publisher={Oxford, at the Clarendon Press},
   date={1986},
   pages={x+412},
}

\bib{Ven90}{book}{
	author={Venkov, A. B.},
	title={Spectral theory of automorphic functions and its applications},
	note={Translated from the Russian by N. B. Lebedinskaya},
	publisher={Kluwer Academic Publishers Group, Dordrecht},
	series={Mathematics and its Applications (Soviet Series)},
	isbn={0-7923-0487-X},
	pages={xiv+176},
	date={1990},
}

\bib{Ven91}{article}{
   author={Venkov, A. B.},
   title={On the Selberg trace formula for an automorphic Schr\"{o}dinger
   operator},
   language={Russian},
   journal={Funktsional. Anal. i Prilozhen.},
   volume={25},
   date={1991},
   number={2},
   pages={26--37, 96},
   issn={0374-1990},
   translation={
      journal={Funct. Anal. Appl.},
      volume={25},
      date={1991},
      number={2},
      pages={102--111},
      issn={0016-2663},
   }
}

\bib{Vin81}{article}{
   author={Vinogradov, A. I.},
   author={Takhtadzhyan, L. A.},
   title={Analogues of the Vinogradov-Gauss formula in the critical strip},
   language={Russian}, 
   note={Analytic number theory, mathematical analysis and their
   applications},
   journal={Trudy Mat. Inst. Steklov.},
   volume={158},
   date={1981},
   pages={45--68, 228},
}

\bib{Vor76}{article}{
   author={Voronin, S. M.},
   title={The zeros of zeta-functions of quadratic forms},
   language={Russian},
   note={Number theory, mathematical analysis and their applications},
   journal={Trudy Mat. Inst. Steklov.},
   volume={142},
   date={1976},
   pages={135--147, 269},
}

\bib{Wei76}{book}{
   label={Wei76},
   author={Weil, A.},
   title={Elliptic functions according to Eisenstein and Kronecker},
   series={Classics in Mathematics},
   note={Reprint of the 1976 original},
   publisher={Springer-Verlag, Berlin},
   date={1999},
   pages={viii+93},
 }	
	
\end{biblist}
\end{bibdiv}

\end{document}